\documentclass[a4paper,11pt,reqno]{amsart}
\usepackage[T1]{fontenc}
\usepackage{amsmath}

\usepackage{tikz}
\usetikzlibrary{calc}
\usepackage{tikz}
\usetikzlibrary{decorations.markings}

\usepackage{latexsym,amsfonts,amssymb,amsthm,euscript, subfigure}
\usepackage{graphicx}
\usepackage{hyperref}
\usepackage{cleveref}
\usepackage{float}
\usepackage{color,fancybox}
\usepackage[normalem]{ulem}  
\usepackage{latexsym}
\usepackage{enumitem}
\usepackage{comment}
\usepackage[british]{babel}

\usepackage{mathrsfs}

\newcounter{hipotese}

\theoremstyle{plain}
\newtheorem{Teo}{Theorem}
\newtheorem{Def}[Teo]{Definition}
\newtheorem{Ex}[Teo]{Example}
\newtheorem{Lema}[Teo]{Lemma}
\newtheorem{Prop}[Teo]{Proposition}

\newtheorem{Cor}[Teo]{Corollary}
\newtheorem{maintheorem}{Theorem}

\newtheorem{mar}[Teo]{Remark}

\newcommand{\eps}{\varepsilon}

\begin{document}

\title{On the expansiveness of invariant measures under pseudogroups}

\author[A. Arbieto]{A. Arbieto}
\address{Alexander Arbieto\\ Departamento de Matem\'atica, Instituto de Matem\'atica, Universidade Federal do Rio de Janeiro, Rio de Janeiro, RJ, Brazil}
\email{arbieto@im.ufrj.br}

\author[L. Segantim]{L. Segantim}
\address{Luana Segantim\\ Departamento de Matem\'atica, Instituto de Matem\'atica, Universidade Federal do Rio de Janeiro, Rio de Janeiro, RJ, Brazil}
\email{luanasegantim@gmail.com}

\author[J. Siqueira]{J. Siqueira}
\address{Jaqueline Siqueira\\ Departamento de Matem\'atica, Instituto de Matem\'atica, Universidade Federal do Rio de Janeiro, Rio de Janeiro, RJ, Brazil}
\email{jaqueline@im.ufrj.br}


\begin{abstract} 

In this paper, we define and study weak expansive and expansive measures for pseudogroups, these two notions appear when analyzing the role of the generating set. We investigate the relations between such properties. We also provide a criterion for a measure to be  weak expansive through the positivity of its entropy, generalizing \cite{AM}. We also show that in some settings equicontinuous pseudogroups have no expansive measures.
\end{abstract}

\maketitle

\section{Introduction}

The study of chaotic properties on dynamical systems is a very classical and fruitful area of research. By its nature, it is present in many complicated systems. Moreover, it was detected in hyperbolic dynamics in the celebrated work of Smale \cite{Smale} and also in the classical Lorenz equations, in the study of wheater prediction.

This complicated behavior appears in the literature in several ways, like sensitivity to initial conditions \cite{c.}, positive entropy \cite{pe} or expansivity (as in Smale's work). We are interested in the latter one. 

Expansivity means that there is a certain positive scale, such that if two orbits are close up to this scale at any time then the orbits are actually equal. In other words, different orbits must ``separate''. In the topological fashion this was defined by Utz \cite{MU} for homeomorphisms, and it was discussed for other type of dynamical systems by several authors: for non-singular flows \cite{PW}, for singular flows \cite{Komuro}, for group actions \cite{GK}. In this article, we will focus on pseudogroups. There are two reasons. The first one is that they resemble the theory of group actions but in a very local fashion (which creates several complications). The second one is that they appear naturally as a tool to study the dynamics of foliations, since they can be studied using the pseudogroup of holonomy generated by the foliation.

In recent years, the study of expansive-type properties arise in several works. Notice that the expansivity can be rephrased as the following: take a point $x$ and define as the Bowen ball of radius $c>0$ of $x$ as the set of other points whose orbit keeps $c$-close to the orbit of $x$, then the system is expansive if those Bowen balls reduces to a singleton. So new expansive-type properties deal with the study of such Bowen balls.

Indeed, the notion of \emph{entropy expansive} systems was introduced by Bowen in \cite{BO}, where it is asked that for some $c>0$ the entropy of the Bowen balls is zero. It turns out that this has many implications in ergodic theory, since this implies the upper semicontinuity of the entropy map and, therefore, the existence of equilibrium states. Obviously, this is a weaker notion than expansivity and, later on, it was proved that entropy-expansivity is present for many partially hyperbolic diffeomorphisms and flows  \cite{DI}. It was also used to obtain SRB measures in some contexts, as in \cite{COW}.

Among others expansive-type we are interested in the following ones. In a measure theoretical way, the notion of \emph{expansive measures} appears, where it is asked that the measure of Bowen balls are zero, see \cite{AM}. This notion was studied by several authors and has many consequences. Indeed, in \cite{AM}     it was proved that, for continuous maps, every ergodic measure with positive entropy is expansive, thus recasting again a chaotic behavior. Also, it was considered in     \cite{SIR} the case where the map is \emph{measure expansive}, i.e. when every non-atomic measure is expansive. Actually, it was proved by             \cite{artig} that this is equivalent to the map be \emph{countable expansive}, i.e. the Bowen balls are countable sets.

In this paper we introduce the notion of a expansive measure for pseudogroups, inspired by the notion of expansive pseudogroups \cite{Good}. However, due to the complicated behavior of pseudogroups under compositions, we refer the reader to section    $6.2$ for the precise definition. We then discuss its sensitivity to the choice of generators. In particular, even that for maps the definition of an expansive measure independs if we consider all Bowen balls or almost every Bowen ball, due to the nature of pseudogroups, we need to distinguished such behaviors, then we define expansive measures and weakly expansive measures. Then, we prove the following result, which appears with all of its technical details as Theorem \ref{qtp} in section \ref{s.exp}.

\begin{maintheorem}
Let          $\mathcal{G}$ a good pseudogroup, $\mathcal{G}_{1}$ a good generating set and $\mathcal{G}_{2}$  the compacted generating set. Then, there exists $\rho> 0$ such that  if a measure          $\mu$ is $(\mathcal{G}, \mathcal{G}_{2})$-weakly expansive with constant                    $\rho > 0$     then $\mu$ is $(\mathcal{G}, \mathcal{G}_{1})$-expansive with constant $\frac{\rho}{2}$.
\end{maintheorem}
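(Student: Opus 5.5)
We have to guess the definitions. A pseudogroup $\mathcal{G}$ acts by local homeomorphisms on a compact metric space $X$. It is generated by a finite \emph{good generating set} $\mathcal{G}_1=\{g_1,\dots,g_k\}$, where each $g_i:D_i\to R_i$ extends to a homeomorphism $\tilde g_i$ defined on a neighbourhood of $\overline{D_i}$. The \emph{compacted generating set} $\mathcal{G}_2$ consists of restrictions of the $\tilde g_i$ to compact subdomains $\overline{D_i'}\subset D_i$, chosen so that $\mathcal{G}_2$ still generates $\mathcal{G}$.

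The Bowen ball $\Gamma^{\mathcal{G}_j}_c(x)$ is the set of $y$ such that, for every word $w$ in $\mathcal{G}_j$ defined at $x$, $w$ is also defined at $y$ and $d(w(x),w(y))\le c$. We say $\mu$ is $(\mathcal{G},\mathcal{G}_j)$-expansive with constant $c$ if $\mu(\Gamma^{\mathcal{G}_j}_c(x))=0$ for every $x$, and weakly expansive if this holds for $\mu$-a.e.\ $x$. My plan is to prove that for a suitable $\rho$ there is a containment
\[
\Gamma^{\mathcal{G}_1}_{\rho/2}(x)\subset \Gamma^{\mathcal{G}_2}_{\rho}(z)
\]
for some point $z$ in a $\mu$-full set, and then deduce the measure statement.

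\textbf{Step 1: choose $\rho$.} Using compactness of the closures $\overline{D_i}$ and the extensions $\tilde g_i$, I would take $\rho$ smaller than a Lebesgue number for two things at once. First, it controls the inclusions $D_i'\subset D_i$: the $\rho$-neighbourhood of $\overline{D_i'}$ lies in $D_i$. Second, it controls the uniform continuity of the $\tilde g_i$. The conclusion I want from this is: if $d(u,v)\le\rho$ and a generator of $\mathcal{G}_2$ is defined at $u$, then the corresponding generator of $\mathcal{G}_1$ is defined at $v$. This is exactly where the goodness of $\mathcal{G}$ (the extension property) is used.

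\textbf{Step 2: the containment.} I would fix $x$ with $\mu(\Gamma^{\mathcal{G}_1}_{\rho/2}(x))>0$ and aim for a contradiction. Since weak expansiveness gives a $\mu$-full set $F$ of points whose $\mathcal{G}_2$-Bowen balls of radius $\rho$ are null, I can pick $z\in\Gamma^{\mathcal{G}_1}_{\rho/2}(x)\cap F$.

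For any $y\in\Gamma^{\mathcal{G}_1}_{\rho/2}(x)$ and any word $w$ in $\mathcal{G}_2$ defined at $z$, I would show by induction on the length of $w$ that $w$ is defined at $y$ and that
\[
d(w(z),w(y))\le d(w(z),w(x))+d(w(x),w(y))\le \rho .
\]
The induction uses that each letter of $w$ is a restriction of a letter of $\mathcal{G}_1$. Consequently, whenever the $\mathcal{G}_1$-word is defined at $x$, $z$ and $y$, the triangle inequality gives the bound $\rho$.

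\textbf{Step 3: the main obstacle.} The difficulty is showing that the $\mathcal{G}_1$-version of $w$ is actually defined at $x$ (and at $y$). We only know that $w$ is defined at $z$, and $z$ is $\rho/2$-close to $x$ along $\mathcal{G}_1$-words already known to be defined at $x$. This is where Step 1 enters, applied inductively: at each stage the current image of $z$ lies in a compact domain $\overline{D_i'}$ of a $\mathcal{G}_2$-generator. The image of $x$ is within $\rho/2$ of it, so it lies in $D_i$, and the next $\mathcal{G}_1$-letter is defined at $x$. The same argument, using the distance bound $\rho$ from the triangle inequality, handles $y$. This yields the containment
\[
\Gamma^{\mathcal{G}_1}_{\rho/2}(x)\subset\Gamma^{\mathcal{G}_2}_\rho(z).
\]

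\textbf{Step 4: conclusion.} The right-hand side of the containment has measure zero because $z\in F$. Hence $\mu(\Gamma^{\mathcal{G}_1}_{\rho/2}(x))=0$, contradicting the assumption on $x$. Since $x$ was arbitrary, $\mu$ is $(\mathcal{G},\mathcal{G}_1)$-expansive with constant $\rho/2$. The point of the argument is that the $\mu$-a.e.\ hypothesis is upgraded to every point, because every positive-measure ball contains a generic point $z$.
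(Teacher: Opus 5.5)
Your strategy is the paper's. The containment $\Phi^{1}_{\rho/2}(x)\subset\Phi^{2}_{\rho}(z)$ for $z\in\Phi^{1}_{\rho/2}(x)$ is Lemma \ref{9}. There $\rho$ is the minimum over $g\in\mathcal{G}_1$ of the distance from $K_g$ to $X\setminus D_g$, and no extensions $\tilde g_i$ or uniform continuity are needed. Theorem \ref{qtp} then concludes exactly as in your Step 4.

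However, the Bowen ball you guessed is not the paper's, and with your version one step fails. You require every $\mathcal{G}_2$-word $w$ defined at $z$ to be defined at $y$. That means the successive images of $y$ must lie in the compacted domains $int(K_{g_i})$. Step 3 only shows these images are $\rho$-close to the images of $z$, and the choice of $\rho$ then places them in $D_{g_i}$, not in $int(K_{g_i})$. So ``the same argument handles $y$'' proves that the $\mathcal{G}_1$-extension of $w$ is defined at $y$, not that $w$ is.

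The claim is in fact false, and your induction never uses genericity of $z$. Take $X=[0,1]$ and $\mathcal{G}_1=\{id_X,g\}$ with $g=id|_{(0.2,0.8)}$; this set is symmetric since $g^{-1}=g$. Take $K_{id_X}=X$ and $K_g=[0.3,0.7]$; then $\mathcal{G}_2=\{id_X,g|_{(0.3,0.7)}\}$ still generates. For any $\rho<0.1$ set $x=0.3$, $z=0.3+\rho/4$ and $y=0.3-\rho/4$. Every $\mathcal{G}_1$-word defined at $x$ is defined at $y$ and $z$ with displacement at most $\rho/2$, so $y,z$ lie in your $\mathcal{G}_1$-ball around $x$. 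But $g|_{(0.3,0.7)}$ is defined at $z$ and not at $y$, so $y$ is not in your $\mathcal{G}_2$-ball around $z$.

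The fix is to use the paper's definition. There $\Phi_\delta(x)$ only tests words in $\mathcal{G}_n^{x}\cap\mathcal{G}_n^{y}$, with no domain-inclusion requirement. One then takes $w\in\mathcal{G}_n^{2,z}\cap\mathcal{G}_n^{2,y}$. That $y$ lies in the domain of $w$, and hence of its $\mathcal{G}_1$-extension, is then a hypothesis rather than something to prove.

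The only nontrivial point is that the extension is defined at the centre $x$. That is exactly your Step 3 induction for $x$, and it is the Claim inside the proof of Lemma \ref{9}. The images of $z$ lie in $int(K_{g_i})$, the images of $x$ stay within $\rho/2$ of them along $\mathcal{G}_1$-words defined at both, and so they lie in $D_{g_i}$.

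With this definition, drop the part of Step 3 concerning $y$. Your Steps 1, 3 and 4, together with the triangle inequality, then reproduce the paper's proof.
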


We also analyze some of its dynamical properties, as the invariance by isomorphisms (Theorem \ref{iso}), and the relations with countable-expansivity (Proposition \ref{cou}). We then extend the result by Arbieto and Morales \cite{AM} for homogeneous measures.

\begin{maintheorem}
 [Criterion for Weakly Expansive Measures]
       Every ergodic invariant and  $(\mathcal{G}, \mathcal{G}_{1})  -$homogeneous measure of a pseudogroup $( \mathcal{G},  \mathcal{G}_{1})$ with  positive  local upper measure entropy is $(\mathcal{G}, \mathcal{G}_{1})$-weakly expansive.
\end{maintheorem}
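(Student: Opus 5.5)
The plan is to follow the argument of Arbieto--Morales \cite{AM}, arguing by contradiction. Suppose $\mu$ is ergodic, invariant and $(\mathcal{G},\mathcal{G}_1)$-homogeneous with positive local upper measure entropy, but not $(\mathcal{G},\mathcal{G}_1)$-weakly expansive. Then for every $\delta>0$ the set of points $x$ whose Bowen ball $\Gamma_\delta(x)$ (the points whose images under all compositions of elements of $\mathcal{G}_1$ defined at $x$ stay $\delta$-close) has positive measure is itself of positive measure. The first step is to use the invariance of $\mu$ under the elements of $\mathcal{G}$, together with ergodicity, to upgrade this. The map $x\mapsto \mu(\Gamma_\delta(x))$ is (essentially) invariant along orbits, since $g(\Gamma_\delta(x))$ is comparable to a Bowen ball of $g(x)$ with a slightly changed radius. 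Hence for each $\delta$ there is a set of full measure on which $\mu(\Gamma_\delta(x))\geq c_\delta>0$. Homogeneity should be exactly what makes this comparison of Bowen balls at different points uniform.

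Second, I will recall the local upper measure entropy as $h^+_\mu=\lim_{\delta\to0}\limsup_{n}-\frac1n\log\mu(B_n(x,\delta))$ for $\mu$-a.e.\ $x$, where $B_n(x,\delta)$ is the Bowen ball of length $n$ (words of length at most $n$ in $\mathcal{G}_1$). The key observation is $\Gamma_\delta(x)\subset B_n(x,\delta)$ for every $n$. Therefore $-\frac1n\log\mu(B_n(x,\delta))\le -\frac1n\log c_\delta\to0$, so the $\limsup$ vanishes for every $\delta$, and hence the local upper entropy is zero on a set of positive (indeed full) measure. Ergodicity, or at least the a.e.\ constancy of local entropy under the pseudogroup action, makes this contradict positivity. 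If the paper defines positivity of entropy as ``$h^+_\mu(x)>0$ on a positive-measure set'', ergodicity is used only to make the entropy function a.e.\ constant.

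The main obstacle is the first step: in a pseudogroup the domains of the generators are only local, so $g(\Gamma_\delta(x))$ need not be a Bowen ball of $g(x)$ for the same generating set, and words may fail to be composable near $x$. I would first try to use homogeneity (uniform comparability of $\mu$ of Bowen balls of the same radius at different points, as in the definition of homogeneous measures) to avoid needing orbit invariance at all. Namely, if one point $x_0$ has $\mu(\Gamma_{\delta}(x_0))>0$, then homogeneity gives $\mu(B_n(y,\delta'))\ge C\mu(B_n(x_0,\delta))\ge C\mu(\Gamma_\delta(x_0))$ for all $y$ and $n$, with a suitable radius $\delta'$. This directly kills the local entropy everywhere for small scales, and ergodicity is used only to identify the local entropy with the global one. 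If the homogeneity constants depend on $\delta$, I would take $\delta'$ as a fixed multiple of $\delta$ and pass to the limit $\delta\to0$ at the end.
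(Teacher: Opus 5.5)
Your first two paragraphs are essentially the contrapositive of the paper's argument. The paper shows that $X_\delta=\{x:\mu(\Phi_\delta(x))=0\}$ is $\mathcal{G}$-invariant, applies ergodicity, and then uses constancy of local entropy for homogeneous measures to find $m_0$ with $\mu(X^{m_0})>0$ and $X^{m_0}\subset X_\delta$. In your version, however, the first step is left unproved. The uniform bound $c_\delta$ is neither justified nor needed: a single point of $\bigcap_k\{x:\mu(\Gamma_{1/k}(x))>0\}$ would suffice (your $\Gamma_\delta$ is the paper's $\Phi_\delta$).

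The argument that actually closes is your fallback in the last paragraph. It is correct, and takes a genuinely different route, once the quantifiers are ordered properly. Homogeneity compares \emph{different} radii: for each $\varepsilon$ there are $\delta(\varepsilon),c(\varepsilon)>0$ with $\mu(B_n(z,\delta))\le c\,\mu(B_n(w,\varepsilon))$ for all $z,w,n$, and there is no ``fixed multiple''. So proceed as follows:
\begin{enumerate}
\item Fix the entropy scale $\varepsilon$ first and set $\delta=\delta(\varepsilon)$.
\item Only then use the failure of weak expansiveness to pick $x_0$ with $\mu(\Gamma_{\delta/2}(x_0))>0$. Halve the radius because $\Gamma_{\delta/2}(x_0)\subset B_n[x_0,\delta/2]\subset B_n(x_0,\delta)$, while $\Gamma_\delta(x_0)$ lies only in the closed balls.
\item Then $\mu(B_n(w,\varepsilon))\ge c^{-1}\mu(\Gamma_{\delta/2}(x_0))>0$ for all $w$ and $n$, so the scale-$\varepsilon$ $\limsup$ vanishes at every $w$.
\item Letting $\varepsilon\to0$ gives zero local upper entropy everywhere, contradicting positivity.
\end{enumerate}

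Compared with the paper, this route uses neither invariance nor ergodicity, so you can drop the remark about ergodicity identifying local and global entropy. It also bypasses the invariance of $X_\delta$, which is exactly where the local domains cause trouble. From $g_i(x)\in X_\delta$ and invariance of $\mu$ one only gets $\mu(\Phi_\delta(x)\cap D_{g_i})=0$. The paper's Claim does not force $\Phi_\delta(x)\subset D_{g_i}$; its union over generators holds trivially, since $\mathrm{id}_X\in\mathcal{G}_1$.

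Your route also proves more. Run directly, positive entropy gives $\varepsilon_0$ with $\liminf_n\mu(B_n(y,\varepsilon_0))=0$. Homogeneity transfers this to every $x$ at radius $\delta(\varepsilon_0)$, hence $\mu(\Phi_{\delta(\varepsilon_0)/2}(x))=0$ for all $x$. So $\mu$ is in fact $(\mathcal{G},\mathcal{G}_1)$-expansive, not merely weakly expansive.

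What the paper's template buys in return is portability. It uses homogeneity only to get positive local entropy on a set of positive measure. That is the form that survives without homogeneity, e.g.\ via Brin--Katok for amenable group actions, where all domains are $X$ and the invariance step is unproblematic.
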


We remark that as a corollary of the methods, we obtain the same result for arbitrary measures in the case of countable amenable group actions, see Remark   \ref{remark de grupo} and Corollary                                          \ref{cor de gr}.

In the other hand, we analyze equicontinuous pseudogroups, since they are related with Riemannian foliations as in               \cite{Lo} and also with pseudogroups of isometries. It was proved by Dominik Kwietniak and Martha Łącka   in    \cite{Ac} that if $\mathcal{G}$ is pseudogroup acting in a compact metric space $X$ and is generated by a finite a symmetric set satisfying the  uniform equicontinuity  condition and $\mathcal{U}$ is a finite open cover of $X$ then the complexity of $\mathcal{G}$ with respect to $\mathcal{U}$ is bounded. In particular, this gives a clue that expansive measures should not exist, as in the next result.

\begin{maintheorem}
    Let $G$ an uniformly equicontinuous group of homeomorphisms in $X$ finitely generated by $G_{1}$. Then, the pseudogroup $\mathcal{G}(G)$ generated by $G$ has no $(\mathcal{G}(G), G_{1})$-weakly expansive measures.
\end{maintheorem}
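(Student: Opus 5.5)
Fix a candidate measure $\mu$ and a candidate constant $\delta>0$. The plan is to show that, for every $\delta>0$, there is a set of positive $\mu$-measure of points whose dynamical (Bowen) ball of radius $\delta$ with respect to $(\mathcal{G}(G),G_{1})$ has positive measure. This contradicts weak expansivity, which requires $\mu(\Gamma_\delta(x))=0$ for $\mu$-almost every $x$, and should hold for every $\delta>0$ no matter which generating set is used.

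\textbf{Step 1: dynamical balls contain metric balls.} Uniform equicontinuity of $G$ gives, for every $\delta>0$, some $\eta>0$ such that $d(x,y)<\eta$ implies $d(g(x),g(y))<\delta$ for all $g\in G$. Every element of the pseudogroup $\mathcal{G}(G)$ is, near any point of its domain, a restriction of some $g\in G$; in particular this holds for words in $G_{1}$ restricted to their domains. Hence, for the orbit pieces entering the definition of the Bowen ball, the condition $d(h(x),h(y))\le\delta$ holds automatically for $y\in B(x,\eta)$, and so
\[
B(x,\eta)\subset \Gamma_\delta(x).
\]
Here one must check that the domains work out. Since the generators are global homeomorphisms of $X$ (restricted), every composition is defined at both $x$ and $y$. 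I expect this verification against the precise definition of the Bowen ball in Section 6.2 to be the main technical point, because pseudogroup Bowen balls involve domains of compositions. The key is that elements of $\mathcal{G}(G)$ are restrictions of global maps, so no domain issue can exclude $y$.

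\textbf{Step 2: positive measure of some ball.} The space $X$ is compact, so it is covered by finitely many balls $B(x_i,\eta/2)$. Since $\mu(X)=1$, some ball $B(x_i,\eta/2)$ has positive measure. For every $x$ in that ball we have $B(x_i,\eta/2)\subset B(x,\eta)\subset\Gamma_\delta(x)$. Therefore $\mu(\Gamma_\delta(x))>0$ for all $x$ in a set of positive measure, so $\mu$ is not weakly expansive with constant $\delta$.

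\textbf{Step 3: conclusion.} Since $\delta>0$ was arbitrary, $\mu$ is not $(\mathcal{G}(G),G_{1})$-weakly expansive. Since $\mu$ was an arbitrary measure, there are no such measures, which proves the theorem.

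If the paper's definition of weak expansivity quantifies over the constant differently (for instance, requiring measure zero off a null set only for small $\delta$), the same argument still applies, because it produces a positive-measure failure at every scale.
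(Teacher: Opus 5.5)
Your proposal is correct and follows essentially the same route as the paper. The key step in both is the inclusion $B(x,\eta)\subset\Phi_\delta(x)$, obtained from uniform equicontinuity together with the fact that every word in $G_1$ is a globally defined element of $G$, so that $\mathcal{G}_n^x\cap\mathcal{G}_n^y=\mathcal{G}_n$. The only difference is in the final compactness step, and it is cosmetic: the paper covers $\overline{X_\rho}$ by finitely many balls centred at points of $X_\rho$ to force $\mu(\overline{X_\rho})=0$, whereas you find a ball $B(x_i,\eta/2)$ of positive measure all of whose points have Bowen balls of positive measure. Both versions work.
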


This article is organized as follows. In section \ref{s.prel}, we give the necessary definitions that will be used along the text. In section \ref{s.exp}, we define and study expansive measures. In section \ref{s.iso}, we recall the notions of isomorphic pseudogroups and study its relations with expansive measures and entropy. In section \ref{s.arbmor}, we study stable sets and give a criterion to obtain expansive measures through positive entropy. Finally, in section \ref{s.equi}, we show that uniform equicontinuous pseudogroups has no expansive measures. Along the text several questions appears naturally.

\section{Preliminaries}
\label{s.prel}

In this section, we recall the  basic notions needed to give precise statements of the results and prepare some tools for the rest of the article. Even that some definitions holds for general topological spaces, in this article we will always suppose that the phase space $X$ is a compact metric space, and we will consider the Borel $\sigma$-algebra when dealing with probability measures.

\subsection{Pseudogroups}

Let $Homeo(X)$ be the set of all homeomorphisms between open subsets of $X$.          Given $g \in Homeo(X)$ we denote by $D_{g}$ its domain and $R_{g}$ its range. Further, we write $g(A)$ to mean $g(A \cap                   D_{g})$ and              if $h \in Homeo(X)$ then $D_{h \circ g} = g^{-1}(D_{h})$.

We recall    the concept of pseudogroup as presented                                                   in                               \cite{Entropy}.
\begin{Def}
A subset $\mathcal{G} \subset Homeo(X)$ is said to be a pseudogroup if it satisfies the following properties:

\begin{enumerate}[label=(\arabic*)]
    \item If $g, h \in \mathcal{G}$ then $g \circ h \in \mathcal{G}$;
    
    \item If $g \in \mathcal{G}$   then $g^{-1} \in \mathcal{G}$;
    
    \item If $g \in \mathcal{G}$ and $U$ is open subset of $D_{g}$ then $g|_{U} \in \mathcal{G}$;
    
    \item                                  If $g \in  Homeo(X)$, $\mathcal{U}$ is an open cover of $D_{g}$, and $g|_{U} \in \mathcal{G}$, for every $ U \in \mathcal{U}$, then $g \in                              \mathcal{G}$;
    
    \item $id_{X} \in                                                     \mathcal{G}$.
\end{enumerate}
    
\end{Def}


\subsection{Generating Sets}
Now, we will define finitely generated pseudogroups as is presented in \cite{Good}.

\begin{Def}\cite{Good}
    Let       $\Gamma \subset Homeo(X)$   a subset satisfying
    \begin{equation*}
        \bigcup_{g \in  \Gamma} (D_{g} \cup R_{g}) = X.
    \end{equation*}
    We define the pseudogroup $\mathcal{G}(\Gamma)$ generated by $\Gamma$ as the pseudogroup given by the property: $g \in  \mathcal{G}(\Gamma)$  if, and only if, $g \in Homeo(X)$ and  for any $x \in D_{g}$ there are $g_{1}, \ldots, g_{n} \in \Gamma$, $e_{1}, \ldots, e_{n} \in \{\pm          1\}$ and   a neighborhood $U_{x}$ of $x$, with $U_x \subset D_{g}$ and satisfying $g|_{U_{x}} = g_{1}^{e_{1}} \circ \ldots \circ g_{n}^{e_{n}}|_{U_x}$.

    If $\Gamma$ is finite then $\mathcal{G}(\Gamma)$ is  called a finitely generated pseudogroup.
\end{Def}

\begin{Def}\cite{Good}
    A generating set $\Gamma$ is said to be symmetric  if $id_{X}$ and $ \Gamma^{-1}=\{g^{-1}; \ g \in                  \Gamma\}$ are contained in                $\Gamma$.
\end{Def}

\begin{Ex}\cite{Good}
   Let $\mathcal{U}$ a nice covering of a foliated manifold $(M, \mathcal{F})$ and for any $U \in                                     \mathcal{U}    $ consider $T_{U}$ the space of the plaques of $\mathcal{F}$ contained in $U$. Define the \emph{complete transversal} for $\mathcal{F}$ by
   \begin{equation*}
       T = \bigsqcup_{U \in \mathcal{U}} T_{U}.
   \end{equation*}
   Now, let $U, V \in \mathcal{U}$ with $U \cap V \neq \emptyset$ and               consider $D_{VU}$ as the open subset of $T_{U}$   of all plaques $P$ of $U$ such that $P \cap V \neq                                                              \emptyset.$  We define the holonomy map $h_{VU} : D_{VU}                             \longrightarrow T_{V}$ in the following way:
   \begin{equation*}
       h_{VU}(P) = P' \ \text{if and only if the plaques} \ P \cap P' \neq \emptyset. 
   \end{equation*}
   Then, we define the \emph{Holonomy pseudogroup of $\mathcal{F}$} as the pseudogroup generated by all the maps  $h_{VU}$. 
\end{Ex}

\begin{Ex}
    If $G$ is a group finitely generated by $G_{1}$ then the pseudogroup  $\mathcal{G}(G_{1})$ generated by $G_{1}$ always contains the group $G$.
\end{Ex}

\begin{Ex}
Let $X$ be a metric space and consider $Iso(X)$ the group of isometries in $X$, that is,
\begin{equation*}
    d(g(x), g(y)) = d(x, y)
\end{equation*}
for every $g \in Iso(X)$ and $x, y \in X$.  Then, the family $\mathcal{G}({Iso}(X))$ generated by $Iso(X)$ is a pseudogroup including all of the local isometries.
\end{Ex}

For the rest of the paper we will always deal with finitely generated pseudogroups.

\subsection{Dynamical Balls}

Let $(\mathcal{G},  \mathcal{G}_{1})$ be a finitely generated pseudogroup. Define 
\begin{equation}\label{*}
\mathcal{G}_{n}^{x}=\{ g \in \mathcal{G}_{n}; \ x \in D_{g}  \}
\end{equation}
where, $\mathcal{G}_{n}  = \{  g_{1} \circ \ldots \circ g_{n}; g_{i} \in  \mathcal{G}_{1}\}$.

Now, we define the dynamical balls. For $n\in\mathbb{N}$ and $\eps> 0$, we define the \emph{dynamical} $n-$balls centered at $x$ with radius $\eps$ with  respect to the pseudogroup $(\mathcal{G}, \mathcal{G}_{1})$ given by 
\begin{equation*}
    B_{n}^{\mathcal{G}}(x, {\eps}) = \{ y \in X; \ d(g(x), g(y )) < \delta, \   \forall g \in  \mathcal{G}_{n}^{x} \cap \mathcal{G}_{n}^{y}  \},
\end{equation*}
and we denote
\begin{equation*}
    B_{n}^{\mathcal{G}}[x, {\eps}] = \{ y \in X; \ d(g(x), g(y )) \leq\delta, \   \forall g \in  \mathcal{G}_{n}^{x} \cap \mathcal{G}_{n}^{y}  \}.
\end{equation*}

Then,  consider the \emph{Bowen ball}  as the intersection of all $n-$balls denoted by

\begin{center}
    $
\begin{array}{ccl}
    \Phi_ {\delta}(x) &=& \{ y \in X; \ d(g(x), g(y )) \le \delta, \   \forall g \in  \mathcal{G}_{n}^{x} \cap \mathcal{G}_{n}^{y} \ and \ n \in  \mathbb{N}    \}\\
    \\
        & = &  \bigcap\limits_{n \in \mathbb{N}}   B_{n}^{\mathcal{G}}[x, {\eps}].
\end{array}
$

\end{center}

\begin{Lema}
     The Bowen and dynamical balls are Borel sets.
\end{Lema}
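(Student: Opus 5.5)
The approach is to write each ball as a finite Boolean combination of open and closed sets, using two facts: $\mathcal{G}_{1}$ is finite, and every domain $D_g$ is open.

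\emph{Step 1: finiteness.} Since $\mathcal{G}_{1}$ is finite, $\mathcal{G}_{n}$ is a finite set of words $g=g_{1}\circ\cdots\circ g_{n}$, each a homeomorphism between open sets. Its domain
\[
D_g = g_n^{-1}\bigl(\cdots g_1^{-1}(D_{g_1})\cdots\bigr)
\]
is open, because each $g_i$ is continuous on an open domain.

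\emph{Step 2: one word at a time.} Fix $x$ and $n$, and set
\[
F_g=\{y\in X : y\notin D_g\}\ \text{ if } x\in D_g, \qquad F_g=X\ \text{ if } x\notin D_g .
\]
Let $U_g=\{y\in D_g : d(g(x),g(y))<\eps\}$ when $x\in D_g$; we use $\eps$ throughout, since the $\delta$ in the paper's definition is evidently the same radius. The set $U_g$ is open, since $y\mapsto d(g(x),g(y))$ is continuous on the open set $D_g$. Then
\[
B_{n}^{\mathcal{G}}(x,\eps)=\bigcap_{g\in\mathcal{G}_{n}^{x}}\bigl(U_g\cup (X\setminus D_g)\bigr),
\]
because a point $y\notin D_g$ imposes no condition from $g$. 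Each set $U_g\cup(X\setminus D_g)$ is the union of an open set and a closed set, hence Borel. The intersection is finite, so $B_{n}^{\mathcal{G}}(x,\eps)$ is Borel.

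\emph{Step 3: closed balls and Bowen balls.} The closed version is identical, with $U_g$ replaced by $\{y\in D_g : d(g(x),g(y))\le \eps\}$. This set is relatively closed in $D_g$, i.e.\ the intersection of a closed set with the open set $D_g$, so it is Borel. Finally,
\[
\Phi_\eps(x)=\bigcap_{n\in\mathbb{N}} B_{n}^{\mathcal{G}}[x,\eps]
\]
is a countable intersection of Borel sets, hence Borel.

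The only delicate point is that the index set $\mathcal{G}_{n}^{x}\cap\mathcal{G}_{n}^{y}$ depends on $y$. Rewriting the condition as ``$y\notin D_g$ or the distance inequality holds'', as in Step 2, removes that dependence, and this is where the openness of $D_g$ is used. With that in place the argument is routine, and I expect no real obstacle. Even without finiteness of $\mathcal{G}_1$, each $\mathcal{G}_n$ is countable whenever $\mathcal{G}_1$ is countable, so the same intersections would remain countable and the sets would still be Borel.
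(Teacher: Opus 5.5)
Your proposal is correct and is essentially the paper's proof. It uses the same decomposition $B_{n}^{\mathcal{G}}(x,\eps)=\bigcap_{g\in\mathcal{G}_{n}^{x}}\bigl(g^{-1}(B(g(x),\eps))\cup D_g^c\bigr)$, the same ``relatively closed in the open set $D_g$'' argument for $B_{n}^{\mathcal{G}}[x,\eps]$, and the same countable intersection for $\Phi_\eps(x)$; the only slip is harmless: the domain of $g_1\circ\cdots\circ g_n$ is $g_n^{-1}(\cdots g_2^{-1}(D_{g_1})\cdots)$, with no innermost $g_1^{-1}$.
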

\begin{proof}
    Note  that
$$B_{n}^{\mathcal{G}}(x, {\eps})=\bigcap_{g\in \mathcal{G}_{n}^{x}}\left(g^{-1}(B(g(x),\eps))\cup D_g^c\right)=:\bigcap_{g        \in \mathcal{G}_{n}^{x}} (A_g^x\cup D_g^c).$$

Let $y\in B_{n}^{\mathcal{G}}(x,  \eps)$ and take $g\in \mathcal{G}_n^x$. Then, either $y\in D_g^c$ or $g\in G_n^y$. In the latter case, we have
$$d(g(x),g(y))<\eps.$$
Hence, $y\in g^{-1}(B(g(x),\eps))$.

Now, let $y\in \bigcap_{g\in \mathcal{G}_{n}^{x}}\left(g^{-1}(B(g(x),\eps))\cup D_g^c\right)$ and take $g\in \mathcal{G}_{n}^{x}\cap \mathcal{G}_{n}^{y}$. Then $y\in D_g$ and $d(g(x),g(y))<\eps$, hence $y  \in B_{n}^{\mathcal{G}}(x, \eps)$.

Note that, $A_g^x$ is an open set, since $D_g$ is an open set and $D_g^c$ is a closed set, which are Borel sets. Hence $B_{n}^{\mathcal{G}}(x,                \eps)$ is a Borel set, since it is a finite intersection of Borel Sets. 

Similarly, we have $B_{n}^{\mathcal{G}}[x ,\eps] = \bigcap_{g\in \mathcal{G}_{n}^{x}}\left(g^{-1}(B[g(x),\eps])\cup D_g^c\right)$. And since $B[g(x),\eps]$ is a closed subset in $X$ follows that $B[g(x),\eps] \cap R_{g}$  is a closed subset in $R_{g}$ and then by the continuity of $g$ we have $g^{-1}(B[g(x),\eps]) = g^{-1}(B[g(x),\eps] \cap R_{g})$       is a closed subset in $D_{g}$. Therefore there exists a closed subset $F_{g}$ in $X$ such that $  g^{-1}(B[g(x),\eps])  = F_{g} \cap D_{g}$, hence it is a Borel set and consequently $ B_{n}^{\mathcal{G}}[x ,\eps]  $ is a Borel set.

Finally, since $\Phi_{\eps}(x)$ is a nested and countable intersection of $ B_{n}^{\mathcal{G}}[x ,\eps]$, it is also a Borel set.
\end{proof}

\subsection{Good Generating Sets}

It turns out that due to the non compactness of the domains of the elements of the pseudogroup, it is usually required some regularity on the generating sets, see Chapter $2$ in \cite{Good}.

\begin{Def}\cite{Good} \label{good}
    Let  $\mathcal{G}$ be a pseudogroup having a finite symmetric generating set $\mathcal{G}_{1}$. The generating set $\mathcal{G}_{1}$ is said to be \emph{good} if for any $g \in \mathcal{G}_{1}$ there exists a compact subset $K_{g}  \subset D_{g}$ in such a way that 
    $$\mathcal{G}_{2} = \{  g|_{int(  K_{g})}; \ g \in \mathcal{G}_{1}, \    K_{g}                   \subset D_{g} \}$$
    still generates $\mathcal{G}$. We refer to this generating set as the \emph{compacted generating set}. Finally, The pseudogroup       $\mathcal{G}$ is called good if it admits a good generating set.
\end{Def}

\begin{Ex}
    \cite{Good} Every pseudogroup generated by a finite generating set of a group of maps of a compact space, and every                               holonomy pseudogroup associated to a nice covering of a compact foliated manifold are good.
\end{Ex}

As in Definition \ref{good}, to keep tracking the generating sets used, if $(\mathcal{G}, \mathcal{G}_{1})$ is a good pseudogroup, we will write 
\begin{equation*}
    \mathcal{G}_{n}^{1,x}:= \mathcal{G}_{n}^{x}  = \{ g \in \mathcal{G}_{n}^{1}; \ x \in D_{g}  \}
\end{equation*}
where  $\mathcal{G}_{n}^{1}= \mathcal{G}_{n}  = \{  g_{1} \circ \ldots \circ g_{n};\  g_{i} \in  \mathcal{G}_{1}\}$
and 
\begin{equation*}
    \mathcal{G}_{n}^{2,x}:=\{ g \in \mathcal{G}_{n}^{2}; \ x \in D_{g}  \}
\end{equation*}
where,  $\mathcal{G}_{n}^{2}  = \{  g_{1}' \circ \ldots \circ g_{n}';               \  g_{i}' \in  \mathcal{G}_{2}\} = \{g_{1}|_{int(K_{g_{1}})} \circ \ldots \circ g_{n}|_{int(K_{g_{n}})}; \ g_{i} \in \mathcal{G}_{1}, \ 1 \leq                       i \leq n\}$.


With this difference of the generators in mind, we will denote $\Phi_{\delta}^{1}(x) :=  \Phi_{\delta}(x)  $ and 
\begin{equation*}
    \Phi_ {\delta}^{2}(x) :=   \{ y \in X; \ d(g(x), g(y )) \le \delta, \   \forall g \in  \mathcal{G}_{n}^{2,x} \cap \mathcal{G}_{n}^{2,y} \ and \ n \in  \mathbb{N}    \}.
\end{equation*}
the Bowen ball with respect to the generating set $\mathcal{G}_{2}$.

\subsection{Ergodic Theory of Pseudogroups}

Now, we state the basic definitions of the ergodic theory for pseudogroups, based on \cite{Good}.

Let $\mathcal{G}  \subset Homeo(X)$ be a pseudogroup with finite and symmetric generating set  $\mathcal{G}_{1}$.

\begin{Def}
    A Borelian subset  $A \subset X$ is said to be  invariant under $\mathcal{G}$ if
    \begin{equation*}
              g(A \cap   D_{g} ) \subset A,  
    \end{equation*}
    for every $g   \in  \mathcal{G}.$
\end{Def}

We give a simple criterion to  a set to be invariant.

\begin{Prop}\label{.}
         Let $\mathcal{G}$ be a finitely generated pseudogroup with a generating set      $\mathcal{G}_1=  \{ g_{1}, \ldots, g_n \}$. If a subset $A \subset X$ is  $\mathcal{G}_{1}-$invariant then $A$ is also $\mathcal{G}-$invariant, that is,
    \begin{equation*}
        g_{i}( A \cap D_{g_{i}}) \subset A,  \ \forall g_{i}   \in \mathcal{G}_{1} \Rightarrow  g( A \cap D_{g}) \subset A,  \ \forall g   \in \mathcal{G}.
    \end{equation*}

    \end{Prop}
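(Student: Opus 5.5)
The plan is to argue pointwise, using the local description of elements of $\mathcal{G}=\mathcal{G}(\mathcal{G}_1)$ given in the definition of a generated pseudogroup, together with an induction on word length. Throughout I use the standing assumption of this subsection that $\mathcal{G}_1$ is finite and \emph{symmetric}. Symmetry is what turns invariance under the generators into invariance under their inverses.

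Fix $g\in\mathcal{G}$ and $x\in A\cap D_g$; we must show $g(x)\in A$. By the definition of $\mathcal{G}(\mathcal{G}_1)$, there are $h_1,\dots,h_m\in\mathcal{G}_1$, signs $e_1,\dots,e_m\in\{\pm1\}$ and a neighbourhood $U_x\subset D_g$ of $x$ with
$$g|_{U_x}=h_1^{e_1}\circ\cdots\circ h_m^{e_m}|_{U_x}.$$
Since $\mathcal{G}_1$ is symmetric, each $f_i:=h_i^{e_i}$ again belongs to $\mathcal{G}_1$. So $g$ coincides near $x$ with a composition $f_1\circ\cdots\circ f_m$ of elements of $\mathcal{G}_1$. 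Moreover, $x$ lies in the domain of this composition, because the displayed identity requires $U_x$ to be contained in it.

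Recall the convention $D_{h\circ k}=k^{-1}(D_h)$. Saying that $x$ is in the domain of $f_1\circ\cdots\circ f_m$ therefore means the following. Set $x_m:=x$ and $x_{i-1}:=f_i(x_{i-1+1})$, so $x_{i-1}=f_i(x_i)$ for $i=m,\dots,1$. Then each $x_i$ belongs to $D_{f_i}$. Now induct downward on $i$. We have $x_m=x\in A$. If $x_i\in A\cap D_{f_i}$, then the hypothesis $f_i(A\cap D_{f_i})\subset A$ gives $x_{i-1}=f_i(x_i)\in A$. Hence $g(x)=x_0\in A$. Since $x\in A\cap D_g$ was arbitrary, $g(A\cap D_g)\subset A$.

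The only delicate point is the role of inverses in the words. If $\mathcal{G}_1$ were not symmetric, invariance under the $g_i$ would not imply invariance under the $g_i^{-1}$; a forward-invariant but not backward-invariant set already shows this for a single homeomorphism. So I would state explicitly that the symmetry assumption is being used here. The remaining bookkeeping is routine: the domain of the composition is handled by the convention $D_{h\circ k}=k^{-1}(D_h)$, and one notes that the proof only ever needs the local representation at $x$, never a global one.
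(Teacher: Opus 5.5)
Your proposal is correct and follows the paper's own argument: take $x\in A\cap D_g$, write $g$ near $x$ as a word $f_1\circ\cdots\circ f_m$ in the generators, and push $x$ through the letters one at a time using $\mathcal{G}_1$-invariance. You are more explicit than the paper in two places: you invoke the symmetry of $\mathcal{G}_1$ to absorb the exponents $e_i$, and you check that each intermediate point $x_i$ lies in $D_{f_i}$, both of which the paper leaves implicit.
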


\begin{proof}
     Let  $g \in \mathcal{G}$. Take  $ x  \in A \cap D_{g}$. Then,  there exist  $g_{1}, \ldots, g_{l}$ $\in  \mathcal{G}_{1}$    such that
     \begin{equation*}
         g(x)     = g_{1}\circ \ldots \circ g_{l}(x).
     \end{equation*}

     Since   $A$ is $\mathcal{G}_{1}-$invariant, that is, $  g_{i}( A \cap D_{g_{i}}) \subset A,  \ \forall g_{i}   \in \mathcal{G}_{1}$, then 
     \begin{center}
      $
      \begin{array}{ccl}
          x \in A     & \Rightarrow & g_{l}(x) \in A\\
          \\
          &      \Rightarrow   &             g_{l-1} \circ g_{l}(x)     \in A\\
          \\
          &  \vdots & \\
          \\
          &  \Rightarrow    & g(x) = g_{1} \circ \ldots  \circ         g_{l}(x)  \in A. 
      \end{array}
      $
     \end{center}
    Therefore, $  g( A \cap D_{g}) \subset A,  \ \forall g   \in \mathcal{G}$.  
\end{proof}

\begin{mar}
    Note that since we are considering only symmetric generating sets $\mathcal{G}_{1}$, we have that a subset $A $ is $\mathcal{G}_{1}-$invariant if, and only if, 
    \begin{equation*}
             g_{i}^{-1}(A  \cap D_{g_{i}^{-1}})   \subset  A,       \              \forall g_{i} \in \mathcal{G}_{1}.
    \end{equation*}
\end{mar}

\begin{Def}
A probability measure $\mu$ in $X$ is said to be invariant under $\mathcal{G}$ if 
\begin{equation*}
   \mu(g(A)) = \mu(A)
\end{equation*}
for every $ g \in \mathcal{G}$ and every Borelian  subset $A$ of $D_{g}$.

\end{Def}

\begin{Def}
   An invariant probability   measure $\mu$ in X is said to be ergodic if every invariant subset  $A \subset X$ is such that $\mu(A)=0 $ or 1.
\end{Def}

We also recall the notion of entropy using dynamical balls as in \cite{AB}.

\begin{Def}
    Let $x \in X$ and $\mu$ be a probability measure  in  X. The numbers

    \begin{equation*}
         \underline{h}_{\mu}((\mathcal{G}, \mathcal{G}_{1}),x)  = \lim_{  \varepsilon \to  0} \liminf_{n \to  \infty }  \frac{-1}{n}  \log \mu(B_{n}^{\mathcal{G}}(x, \varepsilon))
    \end{equation*}

    and

\begin{equation*}
        \overline{h}_{\mu}((\mathcal{G}, \mathcal{G}_{1}),x)  = \lim_{  \varepsilon \to  0} \limsup_{n \to  \infty }  \frac{-1}{n}  \log \mu(B_{n}^{\mathcal{G}}(x, \varepsilon))
    \end{equation*}
    
are called the local lower $\mu-$measure entropy  at the point $x$ and local upper $\mu-$measure entropy  at the point $x$                                        with respect to $(\mathcal{G}, \mathcal{G}_{1})$, respectively.

\end{Def}

Next, we define the topological entropy of a finitely generated pseudogroup, introduced in \cite{Entropy}.

\begin{Def}
Let      $(\mathcal{G}, \mathcal{G}_{1})$ be a finitely generated pseudogroup.                                                                       Given $\eps > 0$ and $n \in \mathbb{N}$,  two points $x,y \in X$ are  said to be $((\mathcal{G}, \mathcal{G}_{1}), n, \eps)-$separated if there exists $   g \in \mathcal{G}_{n}^{x} \cap    \mathcal{G}_{n}^{y}$ such that                            $d(g(x), g(y))                   \geq \eps$.

A subset $E \subset X$ is said to be a $((\mathcal{G}, \mathcal{G}_{1}), n, \eps)-$separated set if any two distinct points $x, y \in E$ are $((\mathcal{G}, \mathcal{G}_{1}), n, \eps)-$separated. 

We denote                           $s((\mathcal{G}, \mathcal{G}_{1}), n, \eps)$ the maximal cardinality of a $((\mathcal{G}, \mathcal{G}_{1}), n, \eps)-$separated subset of X.
\end{Def}

These numbers are finite by the compactness of $X$. The topological entropy then measures the exponential growth rate of such numbers at infinitesimal scales.

\begin{Def}
Let $ (\mathcal{G}, \mathcal{G}_{1}) $   be  a finitely generated pseudogroup. The topological entropy of  $(\mathcal{G}, \mathcal{G}_{1})$ is defined by

\begin{equation*}
    h_{top}( \mathcal{G}, \mathcal{G}_{1}) = \lim_{\eps \rightarrow       0^{+}    }  \limsup_{n \rightarrow +\infty}           \frac{1}{n} \log s((\mathcal{G},            \mathcal{G}_{1}), n, \eps)
\end{equation*}
\end{Def}

\section{Expansive Measures}\label{s.exp}

In this section, we define and explore the notion of expansive measures.

\begin{Def}
     Let  $\mathcal{G}$ be a finitely generated pseudogroup with generating set $\mathcal{G}_{1}$. A    Borel               probability measure  $\mu$ in X  is said to be  $(\mathcal{G}, \mathcal{G}_{1})$-expansive  if there exists  $\delta > 0$ such that 
\begin{equation*}
        \mu( \Phi_ {\delta}(x)) =0
    \end{equation*}
for every  $  x \in X$. When the pseudogroup and the generating set are well understood, we simply say that the measure is expansive.

\end{Def}

We remark that expansive measures are always non-atomic.

 In the same spirit, as we did before, we say that a Borel    probability measure $\mu$ is expansive for a good pseudogroup                   $\mathcal{G}$ with respect to $\mathcal{G}_{2}$,  if there exists $\delta  > 0$ such that 
\begin{equation*}
    \mu( \Phi_ {\delta}^{2}(x)) = 0
\end{equation*}
for every                 $x \in X$.

Now, we will discuss the relations between such definitions with respect to the generating set.

\begin{Prop}\label{Bal}
    Let          $\mathcal{G}$ be a good pseudogroup, $\mathcal{G}_{1}$ a good generating set and $\mathcal{G}_{2}$  the compacted generating set. Then, for every $\eta  > 0$                    and $x \in      X,$ we have 
    \begin{equation*}
        \Phi_{\eta}^{1}(x) \subset \Phi_{\eta}^{2}(x).
    \end{equation*}
  
\end{Prop}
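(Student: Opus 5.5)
The plan is to compare the maps that appear in the two definitions of the Bowen ball, showing that every constraint defining $\Phi_{\eta}^{2}(x)$ is already one of the constraints defining $\Phi_{\eta}^{1}(x)$. Since $\Phi_{\eta}^{2}(x)$ is cut out by fewer conditions, it is the larger set.

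First I will show that every word in the compacted generators is a restriction of the corresponding word in the original generators. Take $h\in\mathcal{G}_n^2$, written as
$$h=g_1|_{int(K_{g_1})}\circ\cdots\circ g_n|_{int(K_{g_n})}$$
with $g_i\in\mathcal{G}_1$, and set $g=g_1\circ\cdots\circ g_n\in\mathcal{G}_n^1$. Using the composition convention $D_{h\circ g}=g^{-1}(D_h)$ and induction on $n$, one gets $D_h\subset D_g$ and $h=g|_{D_h}$. The base case holds since $int(K_{g_i})\subset K_{g_i}\subset D_{g_i}$. For the inductive step, a point $z$ lies in the domain of the restricted composition exactly when $z$ is in the inner restricted domain and its image lands in $int(K_{g_1})\subset D_{g_1}$. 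It is then automatically in the domain of the unrestricted composition, and the two maps agree at $z$.

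Next comes the inclusion itself. Let $y\in\Phi_\eta^1(x)$, fix $n$, and take $h\in\mathcal{G}_n^{2,x}\cap\mathcal{G}_n^{2,y}$, so that $x,y\in D_h$. With $g$ as above, $x,y\in D_g$, hence $g\in\mathcal{G}_n^{1,x}\cap\mathcal{G}_n^{1,y}$. Because $y\in\Phi_\eta^1(x)$, this gives $d(g(x),g(y))\le\eta$. Since $h(x)=g(x)$ and $h(y)=g(y)$, we get $d(h(x),h(y))\le\eta$. As $n$ and $h$ were arbitrary, $y\in\Phi_\eta^2(x)$.

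The only delicate point is the bookkeeping of domains under composition of partial maps, that is, checking that restricting each factor shrinks the domain of the composite without changing its values. This is immediate from the definition of $D_{h\circ g}$, so no real obstacle is expected.
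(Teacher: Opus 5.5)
Your proposal is correct and follows the paper's proof. In both, the compacted word $g_1|_{int(K_{g_1})}\circ\cdots\circ g_n|_{int(K_{g_n})}$ is extended to the full word $g_1\circ\cdots\circ g_n\in\mathcal{G}_n^{1,x}\cap\mathcal{G}_n^{1,y}$, and the two maps agree on the smaller domain. Your inductive check that $D_h\subset D_g$ and $h=g|_{D_h}$ simply spells out a step the paper asserts without proof.
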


\begin{proof}
    Let $y \in          \Phi_{\eta}^{1}(x)$ and $g' \in \mathcal{G}_{n}^{2,x} \cap  \mathcal{G}_{n}^{2,y}$, for some $n \in \mathbb{N}$.

Then, by definition there are $g_{1}, \ldots, g_{n} \in \mathcal{G}_{1}$ such that 
$$g' = g_{1}|_{int(K_{g_{{1}}})} \circ \ldots \circ g_{n}|_{int(K_{g_{{n}}})}\textrm{ and }x, y \in D_{g'}.$$
Hence,
\begin{equation*}
                           d(g(x)), g(y)) \leq \eta 
\end{equation*}
   for every $g \in \mathcal{G}_{n}^{1,x} \cap \mathcal{G}_{n}^{1,y}$ and $n           \in \mathbb{N}$. In particular, this is true for the extension $ g = g_{1} \circ    \ldots \circ g_{n}$ of $g'$, because $x, y \in D_{g_{1}|_{int(K_{g_{{1}}})} \circ \ldots \circ g_{n}|_{int(K_{g_{{n}}})}} \subset D_{g_{1}  \circ \ldots \circ g_{n}}$, therefore, 
\begin{equation*}
    d(g'(x), g'(y)) = d(g(x), g(y)) \leq \eta,
\end{equation*}
showing that $y \in                               \Phi_{\eta}^{2}(x)$.
\end{proof}

\begin{Cor}
Let          $\mathcal{G}    $ a good pseudogroup, $\mathcal{G}_{1}$ a good generating set and $\mathcal{G}_{2}$  the compacted generating set.   If $\mu$ is       $(\mathcal{G}, \mathcal{G}_{2})$-expansive   then          $\mu$ is also  $(\mathcal{G}, \mathcal{G}_{1})$-expansive.
\end{Cor}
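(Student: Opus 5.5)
This follows directly from Proposition \ref{Bal} together with the monotonicity of the measure. Suppose $\mu$ is $(\mathcal{G}, \mathcal{G}_{2})$-expansive. Then there is a constant $\delta > 0$ such that $\mu(\Phi_{\delta}^{2}(x)) = 0$ for every $x \in X$. I will show that the same $\delta$ serves as an expansivity constant for $(\mathcal{G}, \mathcal{G}_{1})$.

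Fix an arbitrary $x \in X$. Proposition \ref{Bal} with $\eta = \delta$ gives the inclusion $\Phi_{\delta}^{1}(x) \subset \Phi_{\delta}^{2}(x)$. Both sets are Borel by the lemma showing that Bowen balls are Borel sets, so both can be measured. Monotonicity of $\mu$ then gives
\begin{equation*}
0 \leq \mu(\Phi_{\delta}^{1}(x)) \leq \mu(\Phi_{\delta}^{2}(x)) = 0 .
\end{equation*}
Since $x$ was arbitrary, $\mu(\Phi_{\delta}(x)) = 0$ for all $x \in X$, which is exactly $(\mathcal{G}, \mathcal{G}_{1})$-expansivity with constant $\delta$.

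There is no real obstacle here. The only subtle point is the domain issue in Proposition \ref{Bal}: each element of $\mathcal{G}_{n}^{2}$ is a restriction of an element of $\mathcal{G}_{n}^{1}$, so having fewer constraints makes the $\mathcal{G}_{2}$-Bowen ball larger. That point is already handled in the proof of Proposition \ref{Bal}, so here it only remains to make sure the radius is not changed when passing from one generating set to the other.
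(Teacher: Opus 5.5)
Your proposal is correct and matches the paper's own proof: the inclusion $\Phi_{\delta}^{1}(x)\subset\Phi_{\delta}^{2}(x)$ from Proposition \ref{Bal} gives the claim by monotonicity of $\mu$, with the same constant $\delta$. Your remark that both Bowen balls are Borel (so the measures make sense) is a small detail the paper's one-line proof leaves implicit.
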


\begin{proof}
    By Proposition \ref{Bal}     we have $\Phi_{\eta}^{1}(x) \subset \Phi_{\eta}^{2}(x)$ then $\mu(\Phi_{\eta}^{1}(x))               \leq  \mu(\Phi_{\eta}^{2}(x)) = 0,$ for every $x \in X$.
\end{proof}

We can also control that inclusion when the points are different.

\begin{Lema}\label{9}
Let          $\mathcal{G}$ a good pseudogroup, $\mathcal{G}_{1}$ a good generating set and $\mathcal{G}_{2}$  the compacted generating set.  Then,                  there exists $\rho > 0$ such that 
if     $y_{0} \in \Phi_{\frac{\rho}{2}}^{1}(x_{0}) $ then $\Phi_{\frac{\rho}{2}}^{1}(x_{0})  \subset  \Phi_{\rho}^{2}(y_{0}).
$
\end{Lema}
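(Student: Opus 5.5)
The plan is to fix $\rho$ using the compact sets $K_g$ of Definition \ref{good}, and then to compare $y_0$ with any point of $\Phi_{\frac{\rho}{2}}^{1}(x_{0})$ by passing through $x_0$ with the triangle inequality. Since $\mathcal{G}_1$ is finite and each $K_g\subset D_g$ is compact with $D_g$ open, the quantity
$$\rho_0=\min_{g\in\mathcal{G}_1}\operatorname{dist}(K_g, X\setminus D_g)$$
is positive (take the distance to be $+\infty$ when $D_g=X$). I choose any $\rho>0$ with $\frac{\rho}{2}<\rho_0$.

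Now let $y_0\in\Phi_{\frac{\rho}{2}}^{1}(x_{0})$, let $z\in \Phi_{\frac{\rho}{2}}^{1}(x_{0})$, and let $g'\in\mathcal{G}_{n}^{2,y_0}\cap\mathcal{G}_{n}^{2,z}$. Write $g'=g_{1}|_{int(K_{g_{1}})} \circ \ldots \circ g_{n}|_{int(K_{g_{n}})}$ and let $g=g_1\circ\cdots\circ g_n$ be its extension, as in the proof of Proposition \ref{Bal}. The goal is $d(g'(y_0),g'(z))\le\rho$.

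The key step, and the main obstacle, is to show that $x_0\in D_g$. Otherwise the Bowen ball condition for $x_0$ cannot be applied to $g$. I would prove this by backward induction on $k$, with $h_k=g_{k+1}\circ\cdots\circ g_n$ and $h_n=id$. The inductive claim is that $x_0\in D_{h_k}$.
\begin{itemize}
\item Suppose $x_0\in D_{h_k}$. Since $\mathcal{G}_1$ is symmetric it contains $id_X$, so padding with identities puts $h_k$ in $\mathcal{G}_{n}^{1,x_0}\cap\mathcal{G}_n^{1,y_0}$. Note $y_0\in D_{h_k}$ because $y_0\in D_{g'}$.
\item From $y_0\in \Phi_{\frac{\rho}{2}}^{1}(x_{0})$ we get $d(h_k(x_0),h_k(y_0))\le\frac{\rho}{2}<\rho_0$.
\item Since $y_0\in D_{g'}$, the point $h_k(y_0)$ is the corresponding partial image under $g'$, so it lies in $int(K_{g_k})$.
\item Hence $h_k(x_0)\in D_{g_k}$ by the choice of $\rho_0$, that is, $x_0\in D_{h_{k-1}}$.
\end{itemize}
At $k=0$ this gives $x_0\in D_g$. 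The same reasoning, or simply $z\in D_{g'}\subset D_g$, shows that $z\in D_g$ as well.

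With $x_0,y_0,z\in D_g$ and $g'=g$ on $D_{g'}$, the Bowen ball conditions give $d(g(x_0),g(y_0))\le\frac{\rho}{2}$ and $d(g(x_0),g(z))\le\frac{\rho}{2}$. The triangle inequality then yields $d(g'(y_0),g'(z))\le\rho$. Since $n$ and $g'$ were arbitrary, $z\in\Phi_\rho^2(y_0)$, which proves the inclusion.
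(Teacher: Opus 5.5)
Your proof is correct and follows the paper's argument: you choose $\rho$ from the separation between $int(K_g)$ and $X\setminus D_g$, show $x_0\in D_g$ inductively by comparing the partial images of $x_0$ and $y_0$ (directly, where the paper argues by contradiction), and finish with the triangle inequality through $g(x_0)$. Your requirement $\frac{\rho}{2}<\rho_0$ is slightly less wasteful than the paper's choice $\rho\le\rho_g$, but the difference is cosmetic.
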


\begin{proof}
    Firstly, note that for each $g \in \mathcal{G}_{1}$ there exists $\rho_{g} > 0$ such that if $   z  \notin D_{g}$            and $y \in int(K_{g})$ then $d(z, y) > \rho_{g}$. Hence, since $\mathcal{G}_{1}$ is finite, take $\rho = \min \{\rho_{g}; \ g \in \mathcal{G}_{1}\} > 0$.

    Now, let $x                      \in            \Phi_{\frac{\rho}{2}}^{1}(x_{0})$ and $g' \in \mathcal{G}_{n}^{2,y_{0}} \cap \mathcal{G}_{n}^{2,x}$. Thus, there exist $g_{1}, \ldots, g_{n}        \in \mathcal{G}_{1}$ such that 
\begin{equation*}
    g' = g_{1}|_{int(K_{g_{1}})} \circ \ldots \circ g_{n}|_{int (K_{g_{n}})}.
\end{equation*}
Then, take $g =             g_{1} \circ \ldots \circ g_{n}$ the extension of $g'$. Clearly $x, y_{0} \in D_{g}$.
\\

\textbf{Claim:} $x_{0} \in  D_{g}$.

\emph{Proof of the Claim.}

Suppose by contradiction that $x_{0}                   \notin D_{g}$. So, either $x_{0}                 \notin D_{g_{n}}$, or $g_{n}(x_{0})                 \notin D_{g_{n}}, \ldots$, or $g_{2}                                       \circ \ldots                                                           \circ g_{n}(x_{0})                 \notin D_{g_{1}}$.

Since $y_{0} \in     D_{g'}$, we have $$y_{0}         \in int(K_{g_{n}}),  g_{n}(y_{0}) \in int(K_{g_{n-1}}), \dots, g_{2} \circ \ldots \circ                        g_{n}(y_{0}) \in int( K_{g_{1}}).$$


Suppose first that $x_{0} \notin D_{g_{n}}$. Since                      $y_{0} \in int(K_{g_{n}})$, by the choice of $\rho$ we have 
\begin{equation}\label{e}
    d(x_{0},  y_{0})  > \rho.
\end{equation}
But, $y_{0} \in \Phi_{\frac{\rho}{2}}^{1}(x_{0})$   and $id_{X}      \in \mathcal{G}_{n}^{1,  x_{0}} \cap \mathcal{G}_{n}^{1,  y_{0}}$,                   then 
\begin{equation*}
    d(x_{0}, y_{0}) \leq \frac{\rho}{2},
\end{equation*}
which is a contradiction with (\ref{e}).

Hence, we can suppose that $x_{0} \in D_{g_{n}}$. Now, if $g_{n}(x_{0}) \notin D_{g_{n-1}}$, then, since $g_{n}(y_{0})  \in \  int(        K_{g_{n-1}})$, we have 
\begin{equation}\label{z}
    d(g_{n}(x_{0}), g_{n}(y_{0}))  > \rho.
\end{equation}
But, since $y_{0} \in \Phi_{\frac{\rho}{2}}^{1}(x_{0}),$ and $g_{n} \in \mathcal{G}_{1}^{1, x_{0}} \cap \mathcal{G}_{1}^{1, y_{0}}$, we have 
\begin{equation*}
    d(g_{n}(x_{0}),        g_{n}(y_{0})) \leq \frac{\rho}{2}
\end{equation*}
             which is a contradiction with (\ref{z}).

             Therefore, repeating the argument in the same way, by induction, we complete the proof of the claim.

     To finish the proof of the Lemma, since $g'(x)  = g(x)$, $g'( y_{0}   ) = g(y_{0})$,      $x, y_{0}, x_{0} \in D_{g}$ and $g \in \mathcal{G}_{n}^{1}$ then

     \begin{center}
         $
         \begin{array}{ccl}
              d(g'(x), g'(y_{0})) =   d(g(x), g(y_{0}))&\leq&  d(g(x), g(x_{0})) + d(g(x_{0}), g(y_{0}))\\
              \\
              & \leq & \frac{\rho}{2}  + \frac{\rho}{2}= \rho.
         \end{array}
         $
     \end{center}
        Therefore, $x \in \Phi_{\rho}^{2}(y_{0})$. The proof of the Lemma is complete. 
     \end{proof}

For homeomorphisms, it was proved in \cite{AM} that to prove that a measure is expansive is enough to consider only the Bowen balls for almost every point. However, it is much delicate when consider pseudogroups. This motivates the following definition.

\begin{Def}
    Let $(\mathcal{G}, \mathcal{G}_{1})$ be a finitely generated pseudogroup. We say that a Borel probability measure $\mu$ in $X$ is  $(\mathcal{G},\mathcal{G}_{1})-$weakly expansive, if there exists $\delta  > 0$ such that 
    \begin{equation*}
        \mu(                        \Phi_{\delta}(x)) = 0,
    \end{equation*}
    for $ \mu-a.e. \  x                                     \in X$.
\end{Def}

It is trivial to prove that every expansive measure is weakly expansive. For the converse, we need to recover some compactness of the domains of the generating set. We then recast Theorem A with all of its details as follows.

     \begin{Teo}\label{qtp}
Let          $\mathcal{G}$ a good pseudogroup, $\mathcal{G}_{1}$ a good generating set and $\mathcal{G}_{2}$ the compacted generating set. Then, there exists $\rho> 0$ such that  if a measure          $\mu$ is $(\mathcal{G}, \mathcal{G}_{2})$-weakly expansive with constant                    $\rho > 0$     then $\mu$ is $(\mathcal{G}, \mathcal{G}_{1})$-expansive with constant $\frac{\rho}{2}$.
     \end{Teo}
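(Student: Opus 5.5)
Take $\rho>0$ to be the constant given by Lemma \ref{9}, which depends only on the good generating set $\mathcal{G}_{1}$ and the compacts $K_{g}$. Assume $\mu$ is $(\mathcal{G},\mathcal{G}_{2})$-weakly expansive with constant $\rho$. Then the set
$$
Z=\{y\in X;\ \mu(\Phi^{2}_{\rho}(y))=0\}
$$
has full $\mu$-measure. Fix an arbitrary $x_{0}\in X$; we must show that $\mu(\Phi^{1}_{\rho/2}(x_{0}))=0$.

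The argument splits into two cases. First, suppose $\Phi^{1}_{\rho/2}(x_{0})\cap Z=\emptyset$. Then $\Phi^{1}_{\rho/2}(x_{0})\subset X\setminus Z$, a $\mu$-null set. Since Bowen balls are Borel sets by the lemma in the preliminaries, $\mu(\Phi^{1}_{\rho/2}(x_{0}))=0$.

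Otherwise, choose $y_{0}\in\Phi^{1}_{\rho/2}(x_{0})\cap Z$. Lemma \ref{9} applies to this $y_{0}$ and gives
$$
\Phi^{1}_{\rho/2}(x_{0})\subset\Phi^{2}_{\rho}(y_{0}).
$$
Since $y_{0}\in Z$, we get $\mu(\Phi^{1}_{\rho/2}(x_{0}))\le\mu(\Phi^{2}_{\rho}(y_{0}))=0$. In both cases the ball is null, and $x_{0}$ was arbitrary, so $\mu$ is $(\mathcal{G},\mathcal{G}_{1})$-expansive with constant $\rho/2$.

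Nearly all the difficulty lies in Lemma \ref{9}, which is already proved. It handles the passage from the ball centered at $x_{0}$ to a ball centered at the nearby generic point $y_{0}$, and it uses the compactness of the $K_{g}$ to ensure that $x_{0}$ lies in the domains of the extended compositions. The remaining points to check are minor. Measurability of $Z$ is not needed, because we only use that its complement is contained in a null set. Monotonicity of $\mu$ then applies directly to Borel subsets of such a null set, and if $\mu$ is not complete one can enlarge $X\setminus Z$ to a Borel null set first.
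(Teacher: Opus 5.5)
Your proof is correct and follows the paper's argument: the paper also takes $\rho$ from Lemma~\ref{9}, argues by contradiction that a ball $\Phi^{1}_{\rho/2}(x_0)$ of positive measure must meet the full-measure set $\{y:\mu(\Phi^{2}_{\rho}(y))=0\}$, and applies Lemma~\ref{9} at a point $y_0$ of that intersection. Your two-case split is just the direct form of that contradiction, and your remark that only containment of the exceptional set in a Borel null set is needed (not measurability of the set itself) tightens a point the paper passes over silently.
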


    \begin{proof}
        
    Take $\rho > 0 $ as in Lemma                     \ref{9} and suppose   by contradiction that there exists $x_{0} \in  X$ such that $\mu(\Phi_{\frac{\rho}{2}}^{1}(x_{0})) >    0$. Since $X_{\rho} = \{x      \in X; \ \mu(\Phi_{\rho}^{2}(x)) = 0\}$ has full measure, we have $X_{\rho} \cap \Phi_{\frac{\rho}{2}}^{1}(x_{0}) \neq \emptyset$. Take $y_{0} \in X_{\rho} \cap \Phi_{\frac{\rho}{2}}^{1}(x_{0})$.

By Lemma \ref{9}, we obtain that  $\Phi_{\frac{\rho}{2}}^{1}(x_{0}) \subset \Phi_{\rho}^{2}(y_{0})$. Then, $\mu(\Phi_{\frac{\rho}{2}}^{1}(x_{0}))                                \leq \mu(\Phi_{\rho}^{2}(y_{0})) = 0$. But, this is a contradiction because $\mu(\Phi_{\frac{\rho}{2}}^{1}(x_{0}))$ was positive.
\end{proof}


The following questions arise naturally from this discussion.

\textbf{Question A:}   Does  expansiveness for pseudogroup always coincide with a.e.-expansiveness for the same generating set?

\textbf{Question B:}  Does expansiveness for pseudogroup independ of the      generating set?

Now, we deal with measure expansive and countable expansive pseudogroups.

\begin{Def}
    We say that a finitely generated pseudogroup $(\mathcal{G}, \mathcal{G}_{1} )$ is countably-expansive if there is $\delta > 0$ such that $\Phi_{\delta}(x)$                                   is                          countable, for every $x           \in X$.
\end{Def}

\begin{Def}

A finitely generated pseudogroup $(\mathcal{G}, \mathcal{G}_{1})$ is said to be measure-expansive if every non-atomic Borel probability measure $\mu$ is $(\mathcal{G}, \mathcal{G}_{1})$-expansive. 
Analogously, $(\mathcal{G}, \mathcal{G}_{1})$ is  weakly-measure expansive  if every non-atomic Borel                                      probability measure $\mu$ is $(\mathcal{G}, \mathcal{ G}_{1})$-weakly expansive.
\end{Def}
The next proposition recast the result due to Artigue and Dante \cite{artig} for pseudogroups.
\begin{Prop}\label{q}
    Let $(\mathcal{G}, \mathcal{G}_{1})$ be a finitely generated pseudogroup. Then, $(\mathcal{G}, \mathcal{G}_{1})$ is measure-expansive if, and only if,          $(\mathcal{G}, \mathcal{G}_{1})$ is countably-expansive.
\end{Prop}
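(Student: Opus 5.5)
We follow the strategy of Artigue and Carrasco-Olivera for homeomorphisms, proving the two implications separately. The direction ``countably-expansive $\Rightarrow$ measure-expansive'' is immediate. Suppose $\delta>0$ is such that $\Phi_\delta(x)$ is countable for every $x\in X$, and let $\mu$ be any non-atomic Borel probability measure. The previous Lemma says $\Phi_\delta(x)$ is Borel. By countable additivity and the absence of atoms,
$$\mu(\Phi_\delta(x))=\sum_{y\in\Phi_\delta(x)}\mu(\{y\})=0$$
for every $x$. Hence $\mu$ is $(\mathcal{G},\mathcal{G}_1)$-expansive with constant $\delta$.

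For the converse we argue by contraposition. Assume $(\mathcal{G},\mathcal{G}_1)$ is not countably-expansive. Then for every $k\in\mathbb{N}$ there is $x_k\in X$ with $\Phi_{1/k}(x_k)$ uncountable. Each $\Phi_{1/k}(x_k)$ is an uncountable Borel subset of the compact metric (hence Polish) space $X$, so it contains a Cantor set $C_k$. Choose a non-atomic Borel probability $\nu_k$ supported on $C_k$, for instance the push-forward of the Bernoulli $(1/2,1/2)$ measure under a homeomorphism $\{0,1\}^{\mathbb{N}}\to C_k$. Now set
$$\mu=\sum_{k\ge1}2^{-k}\nu_k .$$
This is a non-atomic Borel probability measure. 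Given any $\delta>0$, pick $k$ with $1/k\le\delta$.

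The remaining point is monotonicity of the Bowen balls in the radius: $\Phi_{1/k}(x_k)\subset\Phi_\delta(x_k)$. This holds directly from the definition, because the admissible set of maps $\mathcal{G}_n^{x}\cap\mathcal{G}_n^{y}$ does not depend on the radius and the inequality $\le 1/k$ implies $\le\delta$. Therefore
$$\mu(\Phi_\delta(x_k))\ge 2^{-k}\nu_k(C_k)=2^{-k}>0 .$$
So no $\delta$ works, $\mu$ is not expansive, and $(\mathcal{G},\mathcal{G}_1)$ is not measure-expansive.

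The main obstacle is the descriptive set theory step: the existence of a Cantor set, and hence a non-atomic measure, inside an uncountable Borel set. For this we invoke the perfect set theorem for Borel subsets of Polish spaces. Alternatively, the earlier Lemma's description of $B_n^{\mathcal{G}}[x,\eps]$ as a finite intersection of sets $F_g\cap D_g$ or $D_g^c$ shows each ball is a finite intersection of sets that are each closed or open. The Bowen ball is then $G_{\delta\sigma}$-type Borel, and the classical theorem applies in either case. Note that the pseudogroup's local nature causes no trouble here, since only the Borel measurability of the balls and their monotonicity in the radius are used.
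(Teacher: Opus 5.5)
Your proof is correct and is essentially the Artigue--Carrasco-Olivera argument that the paper simply defers to: countable additivity gives one direction, and for the converse you build $\mu=\sum_k 2^{-k}\nu_k$ from non-atomic measures on Cantor sets inside the uncountable balls $\Phi_{1/k}(x_k)$. The perfect set theorem for Borel sets is valid but heavier than needed, since each set $D_g^c\cup g^{-1}(B[g(x),\eps])$ is closed (its complement is open in the open set $D_g$); hence $\Phi_\delta(x)$ is in fact closed, and Cantor--Bendixson suffices exactly as for homeomorphisms.
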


\begin{proof}
    The proof for pseudogroups keeps similar         to the one in \cite{artig} for homeomorphisms.
\end{proof}

However, if the pseudogroup is good we can relate it with weak-expansivity.

\begin{Prop}\label{cou}
    Let $( \mathcal{G},                       \mathcal{G}_{1})$ be a good pseudogroup and $\mathcal{G}_{2}$  the compacted generating set.   If $(\mathcal{G}, \mathcal{G}_{2})$ is weakly-measure expansive then $(\mathcal{G},                                    \mathcal{G}_{1})$ is countably-expansive.
\end{Prop}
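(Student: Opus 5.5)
The plan is to combine Theorem \ref{qtp} with Proposition \ref{q}. If every non-atomic measure is $(\mathcal{G},\mathcal{G}_2)$-weakly expansive with a uniform constant, then by Theorem \ref{qtp} every non-atomic measure is $(\mathcal{G},\mathcal{G}_1)$-expansive. That makes $(\mathcal{G},\mathcal{G}_1)$ measure-expansive, and Proposition \ref{q} then gives countable expansivity.

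The difficulty is the uniformity of the constant. Weak-measure expansivity only says that each non-atomic $\mu$ has \emph{some} $\delta_\mu>0$ with $\mu(\Phi^2_{\delta_\mu}(x))=0$ for $\mu$-a.e. $x$. Theorem \ref{qtp} needs this at the specific scale $\rho$ from Lemma \ref{9}. Proposition \ref{q} (following \cite{artig}) also needs one $\delta$ working for all measures.

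I would therefore argue directly by contradiction, in the style of \cite{artig}.

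\textbf{Step 1.} Suppose $(\mathcal{G},\mathcal{G}_1)$ is not countably expansive. Then for every $n$ there is $x_n$ with $\Phi^1_{1/n}(x_n)$ uncountable, and we may take $1/n<\rho/2$.

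\textbf{Step 2.} Each $\Phi^1_{1/n}(x_n)$ is an uncountable Borel subset of the compact metric space $X$, so it contains a Cantor set and hence supports a non-atomic probability $\mu_n$. Set $\mu=\sum_n 2^{-n}\mu_n$; this is non-atomic.

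\textbf{Step 3.} By hypothesis there are $\delta>0$ and a full-measure set $X_\delta=\{y:\mu(\Phi^2_\delta(y))=0\}$. Choose $n$ with $2/n<\delta$ and $2/n<\rho$. Since $\mu_n(X_\delta)=1$, we can pick $y_0\in X_\delta\cap\Phi^1_{1/n}(x_n)$.

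\textbf{Step 4.} Apply the argument of Lemma \ref{9} at scale $1/n$ instead of $\rho/2$. That proof only uses that the scale is less than $\rho$: the claim $x_0\in D_g$ needs $d(g_k\cdots g_n(x_0),g_k\cdots g_n(y_0))\le 1/n<\rho$, and then the triangle inequality gives the rest. The conclusion is $\Phi^1_{1/n}(x_n)\subset\Phi^2_{2/n}(y_0)\subset\Phi^2_{\delta}(y_0)$.

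\textbf{Step 5.} Then $2^{-n}=2^{-n}\mu_n(\Phi^1_{1/n}(x_n))\le\mu(\Phi^2_\delta(y_0))=0$, which is a contradiction. Hence $(\mathcal{G},\mathcal{G}_1)$ is countably expansive.

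The main obstacle is exactly the uniformity issue. The single measure $\mu$ built from the whole sequence of bad Bowen balls handles it: weak expansivity of that one measure forces a scale $\delta$, and that $\delta$ defeats some $\Phi^1_{1/n}(x_n)$. The remaining technical checks are routine: the Bowen balls are Borel (by the earlier Lemma), and an uncountable Borel set supports a non-atomic measure (by the perfect set theorem for Borel sets).
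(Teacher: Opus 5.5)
Your proof is correct. It is essentially the paper's argument (Theorem \ref{qtp} followed by Proposition \ref{q}), with two things made explicit: the $\sum_n 2^{-n}\mu_n$ construction of \cite{artig} that underlies Proposition \ref{q} is written out, and Lemma \ref{9} is run at scale $1/n$.

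Running Lemma \ref{9} at scale $1/n$ also repairs a small imprecision in the paper's one-line proof. Theorem \ref{qtp} is stated only at the fixed scale $\rho$, while a given measure's weak-expansivity constant may be smaller than $\rho$.

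One correction to your framing: Proposition \ref{q} does not require a $\delta$ that is uniform over all measures. Measure-expansivity lets the constant depend on $\mu$, and the summed-measure construction inside its proof is exactly what supplies the uniformity.
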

\begin{proof}
    Since $(\mathcal{G}, \mathcal{G}_{2})$ is weakly-measure-expansive, that is, $(\mathcal{G}, \mathcal{G}_{2})$ is $\mu$-weakly expansive for every non-atomic Borel probability measure $\mu$, 
    by  Theorem \ref{qtp} every such measures are $(\mathcal{G}, \mathcal{G}_{1})-$expansive, hence $(\mathcal{G}, \mathcal{G}_{1})$ is measure-expansive and by Propositon \ref{q} we have $(\mathcal{G},  \mathcal{G}_{1})$ is countably-expansive.
\end{proof}

As a consequence, we obtain that countable expansivity independs of the compacted generating set.

\begin{Cor}
    Let $\mathcal{G}_{1}$ and $\mathcal{G}_{1}'$ be two good generating sets for     a     pseudogroup $\mathcal{G}$ and $\mathcal{G}_{2}$, $\mathcal{G}_{2}'$ the compacted generating sets. Then, $(\mathcal{G}, \mathcal{G}_{2})$ is countably-expansive if, and only if, $(\mathcal{G}, \mathcal{G}_{2}')$ is countably-expansive.
\end{Cor}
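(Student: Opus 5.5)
We would prove the Corollary by a direct comparison of Bowen balls. We aim to show that there is $\lambda>0$ such that
\begin{equation*}
\Phi^{2}_{\delta}(x)\subset \Phi^{2'}_{\delta}(x)\quad\text{and}\quad \Phi^{2'}_{\delta}(x)\subset \Phi^{2}_{\delta}(x)\qquad\text{for all } x\in X,\ 0<\delta<\lambda,
\end{equation*}
where $\Phi^{2'}_\delta$ denotes the Bowen ball for $\mathcal{G}_{2}'$. Countable expansivity of one pair then transfers immediately to the other: if $\Phi^{2}_{\delta_0}(x)$ is countable for all $x$, then so is $\Phi^{2'}_{\delta}(x)$ for $\delta<\min\{\delta_0,\lambda\}$, because Bowen balls are monotone in the radius. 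By symmetry it suffices to prove the first inclusion. The mechanism resembles Proposition \ref{Bal} and Lemma \ref{9}: every element of one compacted generating set is, locally, a word in the other, and compactness makes this uniform.

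\emph{Step 1 (uniform local words).} Take $h'=h|_{int(K_h)}\in\mathcal{G}_{2}'$ with $h\in\mathcal{G}_{1}'$. Since $\mathcal{G}_{2}$ generates $\mathcal{G}$ and $h\in\mathcal{G}$, every point of the compact set $K_h\subset D_h$ has an open neighbourhood $U$ on which $h$ coincides with a composition $w_U$ of elements of $\mathcal{G}_{2}$, with $U\subset D_{w_U}$. (We use that $\mathcal{G}_2$ is symmetric, inherited from $\mathcal{G}_1$; otherwise we enlarge it by inverses.) Extract a finite subcover of $K_h$ and let $\lambda_h$ be a Lebesgue number for it. Since $\mathcal{G}_{2}'$ is finite, set $\lambda=\min_h\lambda_h>0$ and let $N$ be the maximal length of the words used. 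Then any two points of $int(K_h)$ at distance $<\lambda$ lie in a common $U$ and hence in $D_{w_U}$, and $h'=w_U$ near both of them.

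\emph{Step 2 (induction along compositions).} Let $y\in\Phi^{2}_{\delta}(x)$ with $\delta<\lambda$. Let $g'=h_1'\circ\dots\circ h_n'\in\mathcal{G}^{2',x}_n\cap\mathcal{G}^{2',y}_n$. We process the factors from the right. We have $d(x,y)\le\delta<\lambda$ because $id_X\in\mathcal{G}_2$. Hence $x,y$ lie in a common set $U$ for $h_n'$, and $h_n'$ agrees near $x$ and $y$ with a $\mathcal{G}_2$-word $w_n$ having $x,y\in D_{w_n}$. Since $y\in\Phi^2_\delta(x)$, we get $d(h_n'x,h_n'y)=d(w_nx,w_ny)\le\delta<\lambda$. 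The points $h_n'x,h_n'y$ lie in $int(K_{h_{n-1}})$, so we repeat the argument with $h_{n-1}'$, obtaining a word $w_{n-1}$. At each stage the concatenation $w_k\circ\dots\circ w_n$ is a $\mathcal{G}_2$-word of length at most $Nn$, its domain contains both $x$ and $y$, and it agrees with $h_k'\circ\dots\circ h_n'$ at both points. The defining condition of $\Phi^2_\delta(x)$, which ranges over all lengths, then bounds the distance by $\delta$. At the end, $d(g'x,g'y)\le\delta$, so $y\in\Phi^{2'}_\delta(x)$.

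The main obstacle is the bookkeeping in Step 2. We must ensure that the chosen local words have both $x$ and $y$ (and all their intermediate images) in their domains. This is where the Lebesgue number and the requirement $\delta<\lambda$ are essential; without them one point could leave the neighbourhood where the word representation holds. A second, minor nuisance is that words of different lengths appear. This is harmless because $\Phi^2_\delta$ is defined by quantifying over all $n$, and padding with $id_X\in\mathcal{G}_2$ lets us adjust lengths if needed.
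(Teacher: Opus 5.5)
Your strategy is the paper's. Its proof just invokes the argument of Lemma 2.4.3 in \cite{P} to get $\Phi^{2}_{\delta}(x)=\Phi^{2'}_{\delta}(x)$ for $\delta$ below some $\lambda$, which is the pair of inclusions you prove by hand. Your Steps 1--2 correctly prove those inclusions \emph{if} $\mathcal{G}_2$ and $\mathcal{G}_2'$ are symmetric, i.e.\ contain $id_X$ and the inverse of each of their elements.

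\textbf{The gap.} The gap is the claim that this symmetry is inherited from $\mathcal{G}_1$. The inverse of $g|_{int(K_g)}$ is $g^{-1}|_{g(int(K_g))}$, and nothing in Definition \ref{good} relates $K_{g^{-1}}$ to $g(K_g)$. So this inverse need not be (a restriction of) an element of $\mathcal{G}_2$. Likewise, $id_X\in\mathcal{G}_2$ requires $K_{id_X}=X$. The paper itself has to add ``$\mathcal{G}_2$ symmetric'' as a hypothesis in the Criterion for Expansive Measures. Your fallback of enlarging $\mathcal{G}_2$ by inverses is not available, because it changes $\Phi^2_\delta$ and hence the property you are transferring. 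And Step 2 genuinely needs symmetry. The local words $w_U$ coming from generation are words in $\mathcal{G}_2\cup\mathcal{G}_2^{-1}$, whereas $y\in\Phi^2_\delta(x)$ only controls compositions of elements of $\mathcal{G}_2$.

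\textbf{The gap cannot be closed.} Without symmetry the statement is false, so no bookkeeping helps. Let $f$ be a hyperbolic toral automorphism (or the two-sided full shift), and let $\mathcal{G}_1=\mathcal{G}_1'=\{id_X,f,f^{-1}\}$.
\begin{itemize}
\item Choosing all $K_g=X$ gives $\mathcal{G}_2=\mathcal{G}_1$. Then $\Phi^2_\delta(x)=\{x\}$ for $\delta$ below the expansivity constant.
\item Choosing $K'_{id_X}=K'_f=X$ and $K'_{f^{-1}}$ a small closed ball with interior $U$ gives $\mathcal{G}_2'=\{id_X,f,f^{-1}|_U\}$. This still generates $\mathcal{G}$, because $f\in\mathcal{G}_2'$ and generation allows the exponent $-1$.
\end{itemize}
Every composition of elements of $\mathcal{G}_2'$ is a restriction of some $f^k$. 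If $x\notin U$ lies in its domain, then $k\ge 0$. Indeed, the running exponent starts at $0$, moves by at most $1$ per letter, and can only pass from $0$ to $-1$ by applying $f^{-1}|_U$ at the point $x$ itself. Hence, for $x\notin U$,
$$\Phi^{2'}_\delta(x)=\{y;\ d(f^kx,f^ky)\le\delta\ \forall k\ge 0\},$$
which is a local stable set and is uncountable for every $\delta>0$. So $(\mathcal{G},\mathcal{G}_2)$ is countably-expansive while $(\mathcal{G},\mathcal{G}_2')$ is not. The inclusion that fails is $\Phi^{2'}_\delta\subset\Phi^2_\delta$. This is exactly where your argument, with roles swapped, would need to write $f^{-1}$ near a point outside $U$ as a forward word in $\mathcal{G}_2'$.

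\textbf{What you have proved.} Your argument correctly establishes the Corollary under the additional hypothesis that both compacted generating sets are symmetric. This is presumably the convention under which the cited lemma of \cite{P} operates. You should state that hypothesis explicitly rather than assert it is automatic.
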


\begin{proof}
    By argument in Lemma 2.4.3 in \cite{P} we have that $\Phi^{2}_{\delta}(x) =         \Phi^{2'}_{\delta}(x)$ for $\delta <     \lambda$. Then,       the result follows. 
\end{proof}
In particular, we obtain the following independence result.
\begin{Cor}
 Measure-expansiveness of a pseudogroup independs on the compacted generating set.
\end{Cor}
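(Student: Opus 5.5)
The plan is to chain the preceding corollary with Proposition \ref{q}. Measure-expansiveness of $(\mathcal{G},\mathcal{G}_{2})$ is, by Proposition \ref{q}, equivalent to countable expansiveness of $(\mathcal{G},\mathcal{G}_{2})$. The latter property was just shown not to depend on which good generating set is compacted. Concretely, let $\mathcal{G}_{1}$ and $\mathcal{G}_{1}'$ be two good generating sets of $\mathcal{G}$, with compacted generating sets $\mathcal{G}_{2}$ and $\mathcal{G}_{2}'$. Each of these is a finite generating set of $\mathcal{G}$, so $(\mathcal{G},\mathcal{G}_{2})$ and $(\mathcal{G},\mathcal{G}_{2}')$ are finitely generated pseudogroups in the sense used in Proposition \ref{q}.

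The key steps, in order, are as follows.

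\begin{enumerate}[label=(\roman*)]
\item Assume $(\mathcal{G},\mathcal{G}_{2})$ is measure-expansive. By Proposition \ref{q} applied to the pair $(\mathcal{G},\mathcal{G}_{2})$, it is countably-expansive.
\item By the previous corollary, $(\mathcal{G},\mathcal{G}_{2}')$ is countably-expansive.
\item Applying Proposition \ref{q} again, now to $(\mathcal{G},\mathcal{G}_{2}')$, we conclude that $(\mathcal{G},\mathcal{G}_{2}')$ is measure-expansive.
\end{enumerate}

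The converse implication is symmetric: swap the roles of the two generating sets.

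The one point needing care is the applicability of Proposition \ref{q} to a compacted generating set. That proposition is stated for a general finitely generated pseudogroup and uses only the Bowen balls for the chosen generators, so it suffices that $\mathcal{G}_{2}$ is finite and generates $\mathcal{G}$, and both hold by Definition \ref{good}. Symmetry of $\mathcal{G}_{2}$ is not needed for Proposition \ref{q}, but it follows from the symmetry of $\mathcal{G}_{1}$ if we choose $K_{g^{-1}}=g(K_g)$.

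A possible strengthening should also be noted. The previous corollary's proof actually gives equality of Bowen balls, $\Phi^{2}_{\delta}(x)=\Phi^{2'}_{\delta}(x)$ for $\delta<\lambda$. With that in hand, one can show directly that a single non-atomic measure $\mu$ is $(\mathcal{G},\mathcal{G}_2)$-expansive if and only if it is $(\mathcal{G},\mathcal{G}_2')$-expansive: shrink the expansivity constant below $\lambda$, since Bowen balls decrease as $\delta$ decreases. This avoids Proposition \ref{q} entirely. I would present this direct argument as the main proof, since it relies only on monotonicity in $\delta$ and the equality of balls, and keep the chain through Proposition \ref{q} as a remark.
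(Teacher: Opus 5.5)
Your steps (i)--(iii) are exactly what the paper intends. It gives no separate proof and presents the statement as an immediate consequence (``In particular'') of the preceding corollary on countable expansivity together with Proposition \ref{q}.

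The direct argument you would promote to the main proof is a genuinely different route, and it is also valid. If $\mu(\Phi^{2}_{\delta_0}(x))=0$ for all $x$, then for $\delta<\min\{\delta_0,\lambda\}$ you get $\mu(\Phi^{2'}_{\delta}(x))=\mu(\Phi^{2}_{\delta}(x))\le\mu(\Phi^{2}_{\delta_0}(x))=0$, and the converse is symmetric.

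Compared with the chain, this route has two advantages:
\begin{itemize}
\item It bypasses Proposition \ref{q}, whose proof the paper only indicates by analogy with the homeomorphism case.
\item It proves more. Every Borel probability measure is $(\mathcal{G},\mathcal{G}_2)$-expansive if and only if it is $(\mathcal{G},\mathcal{G}_2')$-expansive, and the same holds for weak expansiveness. Your qualifier ``non-atomic'' is superfluous. This measure-by-measure independence is not stated in the paper (compare Questions B and C).
\end{itemize}
The chain's only advantage is that it is a formal consequence of results already stated. Neither route is more robust in its real input. Both rest on the equality $\Phi^{2}_{\delta}(x)=\Phi^{2'}_{\delta}(x)$ for $\delta<\lambda$, which the paper imports from the literature rather than proving.

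One small correction to your side remark. Setting $K_{g^{-1}}:=g(K_g)$ does make $\mathcal{G}_2$ symmetric, but it may destroy the generating property guaranteed by Definition \ref{good}. The original restriction $g^{-1}|_{int(K_{g^{-1}})}$ need not be a restriction of $(g|_{int(K_g)})^{-1}$, so it can be lost. Replacing $K_g$ by $K_g\cup g^{-1}(K_{g^{-1}})$, and $K_{g^{-1}}$ by its image under $g$, avoids this. As you note, though, symmetry is not needed here.
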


Once again we have the following questions.

\textbf{Question C:} Is expansiveness independent of any kind of generating set?

\textbf{Question D:}   If $\Phi_{\delta}^{1}(x)$ is countable then is                 $\Phi_{\delta}^{2}(x)$  also countable?

\section{Conjugacy Properties}
\label{s.iso}

Let      $(X, d_{X})$ and                               $(Y, d_{Y})$
be compact metric spaces and denote by $ Homeo(X, Y)$ the set of all homeomorphims  between open subsets of $X$ and $Y$ respectively.

\begin{Def}\label{def}\cite{Good}
   Let $\mathcal{G} \subset Homeo(X)$  and $\mathcal{H} \subset  Homeo(Y)$ be finitely generated pseudogroups. A subset $\Phi \subset  Homeo(X, Y)$ is said to be an isomorphism between $\mathcal{G}$      and     $\mathcal{H}$, if 
\begin{equation*}
       \bigcup_{\phi \in \Phi} D_{\phi} = X, \ \bigcup_{\phi \in \Phi} R_{\phi} = Y
   \end{equation*}

   \begin{equation*}
       \phi \circ f \circ \psi^{-1} \in \mathcal{H} \Longleftrightarrow f \in \mathcal{G},
   \end{equation*}
   for $ \phi,  \psi\in \Phi$.
\end{Def}

In this case, the pseudogroups $\mathcal{G},  \mathcal{H}$ are called isomorphic and  the set $\Phi$ is                    sometimes denoted as $\Phi: \mathcal{G}   \longrightarrow \mathcal{H}$. Further,  if $\Phi:  \mathcal{G}  \longrightarrow \mathcal{H}   $ is an isomorphism and every map in $\Phi$ is uniformly continuous then $\Phi$ is called an uniform isomorphism, and $\mathcal{G}$  and     $\mathcal{H}$ are                 said to be uniformly isomorphic.

\textbf{Remark:} Observe that since $X$ and $Y$ are compact,    if $\mathcal{G}$ and $\mathcal{H}$ are isomorphic, then $\Phi$ may be taken finite.

The next lemma produces a generating set for a pseudgroup under the action of an isomorphism.

\begin{Lema}\label{A}
    Let $\mathcal{G} \subset Homeo(X)$            and $\mathcal{H} \subset  Homeo(Y)$ be isomorphic finitely generated pseudogroups, with an isomorphism ${\Phi} = \{ \varphi_{1}, \ldots , \varphi_{l}\}$. If                      $\mathcal{G}_{1}$ is a generating set for $\mathcal{G}$,                                   then $\mathcal{H}_{1}:= \{\phi_j\circ f \circ \phi_i^{-1}; \ f\in           \mathcal{G}_{1}, i,j=1,\dots,l\ \}$ is  a generating set for $\mathcal{H}$.
\end{Lema}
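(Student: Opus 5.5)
We must show that $\mathcal{H}_{1}$ generates $\mathcal{H}$. This has two parts: every element of $\mathcal{H}_{1}$ lies in $\mathcal{H}$, and every $h\in\mathcal{H}$ is locally a composition of elements of $\mathcal{H}_{1}$ and their inverses. We also check the covering condition $\bigcup_{h\in\mathcal{H}_1}(D_h\cup R_h)=Y$.

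\emph{Inclusion and covering.} For $f\in\mathcal{G}_{1}\subset\mathcal{G}$, the isomorphism property of Definition \ref{def} gives $\varphi_j\circ f\circ\varphi_i^{-1}\in\mathcal{H}$ directly, so $\mathcal{H}_1\subset\mathcal{H}$ and hence $\mathcal{G}(\mathcal{H}_1)\subset\mathcal{H}$. For the covering condition, we may assume $id_X\in\mathcal{G}_1$, as is implicit for symmetric generating sets. Then $\varphi_i\circ id_X\circ\varphi_i^{-1}=id_{R_{\varphi_i}}$ lies in $\mathcal{H}_1$, and the sets $R_{\varphi_i}$ cover $Y$. If $id_X\notin\mathcal{G}_1$, we use instead that the $D_f\cup R_f$ cover $X$ and push this cover forward through the $\varphi_i$.

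\emph{Local generation.} Let $h\in\mathcal{H}$ and $y\in D_h$. Choose $i$ with $y\in R_{\varphi_i}$ and $j$ with $h(y)\in R_{\varphi_j}$. Shrinking to a neighbourhood $V$ of $y$ with $V\subset R_{\varphi_i}$ and $h(V)\subset R_{\varphi_j}$, set $f=\varphi_j^{-1}\circ h\circ\varphi_i|_{\varphi_i^{-1}(V)}$. The isomorphism condition ($\phi\circ f\circ\psi^{-1}\in\mathcal{H}\iff f\in\mathcal{G}$) shows $f\in\mathcal{G}$. Since $\mathcal{G}=\mathcal{G}(\mathcal{G}_1)$, near $x=\varphi_i^{-1}(y)$ we can write $f=g_1^{e_1}\circ\cdots\circ g_n^{e_n}$ with $g_k\in\mathcal{G}_1$. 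Insert identities $\varphi_{i_k}^{-1}\circ\varphi_{i_k}$ between consecutive factors, choosing $i_k$ so that the intermediate point $g_{k+1}^{e_{k+1}}\circ\cdots\circ g_n^{e_n}(x)$ lies in $D_{\varphi_{i_k}}$; this is possible because the domains $D_{\varphi}$ cover $X$. This gives, near $y$,
\begin{equation*}
h=(\varphi_j g_1^{e_1}\varphi_{i_1}^{-1})\circ(\varphi_{i_1} g_2^{e_2}\varphi_{i_2}^{-1})\circ\cdots\circ(\varphi_{i_{n-1}} g_n^{e_n}\varphi_i^{-1}).
\end{equation*}
Each factor $\varphi_a g^{e}\varphi_b^{-1}$ with $e=-1$ equals $(\varphi_b g\varphi_a^{-1})^{-1}$, so it is an inverse of an element of $\mathcal{H}_1$. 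Thus $h$ is locally a word in $\mathcal{H}_1^{\pm1}$, and $h\in\mathcal{G}(\mathcal{H}_1)$.

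\emph{Main obstacle.} The delicate point is domain bookkeeping. We must ensure that, on a small enough neighbourhood of $y$, every intermediate image stays inside the appropriate $D_{\varphi_{i_k}}$ and inside the domain of the next factor, so that the composed restriction genuinely agrees with $h$ there. We handle this by choosing the $i_k$ pointwise along the orbit of $x$ and then shrinking the neighbourhood. This is possible because there are finitely many factors, each of which is a homeomorphism between open sets, so the preimage of the relevant open sets is an open neighbourhood of $y$.
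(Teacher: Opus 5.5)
Your proof is correct and follows the same route as the paper. You pull $h$ back to $f=\varphi_j^{-1}\circ h\circ\varphi_i\in\mathcal{G}$, write $f$ locally as a word in $\mathcal{G}_1$, and telescope by inserting $\varphi_{i_k}^{-1}\circ\varphi_{i_k}$ at the intermediate points, with the outermost chart forced to be $\varphi_j$. You are also slightly more complete than the paper. Its proof does not check $\mathcal{H}_1\subset\mathcal{H}$, the covering condition, or the factors with exponent $-1$; your identity $\varphi_a g^{-1}\varphi_b^{-1}=(\varphi_b g\varphi_a^{-1})^{-1}$ handles the last of these.
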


\begin{proof}
    Let $g \in \mathcal{H}$ and take $x' \in D_{g}$. There exists $p$ such that $x' \in R_{\phi_{p}}$. So, there exists $x \in D_{\phi_{p}}$             such that $\phi_{p}(x) = x'$. In the same manner, there exists $q$, such that $g(x')\in R_{\phi_{q}}$.  Let $\theta:=\phi_q$ and $\psi:=\phi_p$.

Let $f=\theta^{-1}\circ g  \circ \psi\in \mathcal{G}$. Notice that $f(x)=\theta^{-1}(g(x'))$, so $f(x)\in D_{\theta}$.

So there is      an open neighborhood $U$ of $x$ and $f_1,\dots , f_n$ in $\mathcal{G}_1$ such that
\begin{equation*}
f|_U=f_1\circ \dots \circ   f_n|_U.
\end{equation*}
Hence, $x\in D_{\phi_{p}}\cap U\cap D_{f_n}.$ Let $x_n=f_n(x)$.

There exists $i_n$, such that $x_n\in D_{\phi_{i_{n}}}\cap D_{f_{n-1}}$. Let $x_n'=\phi_{i_n}(x_n)\in R_{\phi_{i_{n}}}$. We define $g_n=\phi_{i_n}\circ f_n\circ \psi^{-1}\in \mathcal{H}_1$. Notice that $x'  \in D_{g_{n}}$, $g_n(x')=x_n'$ and $V=  \psi(U)$ is open.

Now, we proceed by induction. Let $x_{n-1}=f_{n-1}(x_n)$. So there exists $i_{n-1}$ such that $x_{n-1}\in D_{\phi_{i_{n-1}}}\cap D_{f_{n-2}}$. Let $x_{n-1}'=\phi_{i_{n-1}}(x_{n-1})\in R_{\phi_{i_{n-1}}}$.

Now, we define $g_{n-1}=\phi_{i_{n-1}}\circ f_{n-1}\circ \phi_{i_n}^{-1}\in \mathcal{H}_1$. Notice that 
$$g_{n-1}\circ g_n=\phi_{i_{n-1}}\circ f_{n-1}\circ \phi_{i_n}^{-1}\circ \phi_{i_n}\circ f_n\circ \psi^{-1}=\phi_{i_{n-1}}\circ f_{n-1}\circ f_n\circ \psi^{-1}$$
is defined on $V$ and $g_{n-1}\circ g_n(x')=x_{n-1}'$.

So, by induction, we construct $g_1,\dots, g_n\in \mathcal{H}_1$ such that $g_1\circ \dots \circ g_n$ is defined on $V$. 

However, notice that $x_1=f_1\circ\dots\circ f_n(x)=f(x)$. Hence, we can choose $\phi_{i_1}=\theta$.

Thus,

  \begin{center}
        $
        \begin{array}{ccl}
            g|_{V} &=& \theta \circ f\circ \psi^{-1}|_{V}
    =\phi_{q} \circ f_{1} \circ \ldots \circ          f_{n}|_{U}=\\
    \\
     &=& (\phi_{i_{1}} \circ f_{1}) \circ \ldots \circ          f_{n}|_{U}=(g_{1}      \circ \phi_{i_{2}}) \circ f_{2} \circ \ldots f_{n}|_{U}=\\
    \\
    &    = &  g_{1} \circ g_{2} \circ  \phi_{i_{3}}           \circ         \ldots \circ f_{n}|_{U}=\\  
    \\
    && \vdots
    \\
    &=&   g_{1} \circ g_{2} \circ \ldots   \circ  \phi_{i_{n}} \circ f_{n}|_{U}=g_{1} \circ g_{2}             \circ \ldots \circ g_{n}                       \circ \phi_{p}|_{U}=\\
    \\
      & = & g_{1} \circ g_{2}             \circ \ldots \circ g_{n}|_{\phi_{p}(U)}.
        \end{array}
        $
    \end{center}

This shows that $\mathcal{H}_1$ is a generator.
\end{proof}




\begin{Def}
If $\Phi$ reduces to a single  map $\varphi:X \longrightarrow Y$     satisfying definition \ref{def}, then we say the pseudogroups are  strongly isomorphic.
\end{Def}

Now, we see that any global homeomorphism induces a new pseudogroup from the initial one.

\begin{Prop}
    Let                $\mathcal{G}$ be  a      pseudogroup in $X$ and $      \varphi: X \longrightarrow  Y$ an homeomorphism. Then, the subset $\mathcal{H}   = \varphi \circ \mathcal{G}   \circ \varphi^{-1} := \{ \varphi \circ g \circ \varphi^{-1}; \ g \in \mathcal{G}\}$ is a pseudogroup in         $Y$.
\end{Prop}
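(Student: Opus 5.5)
We check the five axioms in the definition of a pseudogroup directly for $\mathcal{H}=\varphi\circ\mathcal{G}\circ\varphi^{-1}$, using the corresponding axiom for $\mathcal{G}$ each time. First we make sure the elements of $\mathcal{H}$ lie in $Homeo(Y)$. Since $\varphi$ is a global homeomorphism, for $g\in\mathcal{G}$ the map $h=\varphi\circ g\circ\varphi^{-1}$ has domain $D_h=\varphi(D_g)$ and range $R_h=\varphi(R_g)$. Both are open in $Y$, and $h$ is a homeomorphism between them, with inverse $\varphi\circ g^{-1}\circ\varphi^{-1}$. The basic identity we will use repeatedly is that conjugation commutes with composition, inversion and restriction:
$$(\varphi g\varphi^{-1})\circ(\varphi h\varphi^{-1})=\varphi(g\circ h)\varphi^{-1},\qquad (\varphi g\varphi^{-1})|_{V}=\varphi\, (g|_{\varphi^{-1}(V)})\,\varphi^{-1}.$$
Domains match as well, because $\varphi$ is a bijection.

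Axioms (1), (2) and (5) then follow at once.
\begin{enumerate}[label=(\arabic*)]
\item Composites of elements of $\mathcal{H}$ are conjugates of composites in $\mathcal{G}$.
\item Inverses are conjugates of $g^{-1}\in\mathcal{G}$.
\item[(5)] $id_Y=\varphi\circ id_X\circ\varphi^{-1}$.
\end{enumerate}
For (3), take an open $V\subset D_h=\varphi(D_g)$. Then $U=\varphi^{-1}(V)$ is an open subset of $D_g$, so $g|_U\in\mathcal{G}$, and $h|_V=\varphi\circ g|_U\circ\varphi^{-1}\in\mathcal{H}$.

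The only step needing care is (4), the gluing axiom, because a priori we only know that $h$ is some element of $Homeo(Y)$, not yet a conjugate of an element of $\mathcal{G}$. Let $h\in Homeo(Y)$, and let $\mathcal{V}$ be an open cover of $D_h$ with $h|_V\in\mathcal{H}$ for every $V\in\mathcal{V}$. Define $g:=\varphi^{-1}\circ h\circ\varphi$. This is a homeomorphism between the open sets $\varphi^{-1}(D_h)$ and $\varphi^{-1}(R_h)$, so $g\in Homeo(X)$. The family $\{\varphi^{-1}(V)\}_{V\in\mathcal{V}}$ is an open cover of $D_g$. For each $V$ we have $h|_V=\varphi\circ g_V\circ\varphi^{-1}$ for some $g_V\in\mathcal{G}$. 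Comparing domains gives $D_{g_V}=\varphi^{-1}(V)$, and since $\varphi$ is bijective, $g|_{\varphi^{-1}(V)}=g_V\in\mathcal{G}$. Axiom (4) for $\mathcal{G}$ then yields $g\in\mathcal{G}$, hence $h=\varphi\circ g\circ\varphi^{-1}\in\mathcal{H}$.

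The potential obstacle is exactly this representation issue: one must know that a member of $\mathcal{H}$ is the conjugate of a uniquely determined element of $\mathcal{G}$. That holds because $\varphi$ is a global bijection, so $g=\varphi^{-1}h\varphi$ is recovered from $h$. This is where globality of $\varphi$, rather than a family of local maps, is used.
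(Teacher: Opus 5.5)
Your proof is correct and follows the same route as the paper: you check the five axioms for $\mathcal{H}$ by transporting each one through conjugation by $\varphi$, and you reduce the gluing axiom (4) to axiom (4) for $\mathcal{G}$. The only difference is in (4): the paper glues the local pieces $g_U$ and verifies well-definedness, while you set $g=\varphi^{-1}\circ h\circ\varphi$ directly, which makes both well-definedness and $g\in Homeo(X)$ automatic (the latter is needed to apply axiom (4) and is left implicit in the paper).
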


\begin{proof}
    First, note that for each $g \in \mathcal{G}$, $\varphi \circ g \circ \varphi^{-1}$ is an homeomorphism,   whose domain and image are the open sets
$$D_{ \varphi  \circ g \circ\varphi^{-1}}=\varphi(D_{g})\textrm{ and } R_{\varphi          \circ g \circ \varphi^{-1}}= \varphi( R_{g}).
$$

Therefore, $\mathcal{H}  \subset Homeo(Y)$. 
Now, we prove that $\mathcal{H}$ is actually a pseudogroup.

    \begin{enumerate}[label=(\arabic*)]

    \item If $h_{1} =    \varphi \circ g_{1} \circ \varphi^{-1}$ and $h_{2} =                  \varphi \circ g_{2} \circ \varphi^{-1} \in \mathcal{H}$, for $g_{1}, g_{2} \in \mathcal{G}$, then 
    $h_{1}  \circ h_{2}  =       \varphi \circ g_{1} \circ g_{2} \circ \varphi^{-1}
    $ belongs to $\mathcal{H}$, because $g_{1} \circ g_{2} \in \mathcal{G}$.
    \\
\item If $h = \varphi \circ g \circ \varphi^{-1} \in \mathcal{H}$, for                some $g \in \mathcal{G}$,       then  $h^{-1}  = \varphi \circ g^{-1}                 \circ \varphi^{-1} \in    \mathcal{H}$, since   $g^{-1} \in \mathcal{G}$.
\\
\item  If $h  \in \mathcal{H}$ and $U \subset D_{h}$ is an open subset, then  $h|_{U} = (\varphi \circ   g \circ \varphi^{-1})|_{U} = \varphi \circ g|_{\varphi^{-1}(U)} \circ \varphi^{-1}$. Therefore, $h|_{U} \in \mathcal{G}$, since $g|_{\varphi^{-1}(U)} \in \mathcal{G}$.
\\
        \item   Suppose $h \in Homeo(Y)$,   $\mathcal{U}$ is an open covering of $D_{h}$, and $h|_{U} \in \mathcal{H},$ for every $                 U \in \mathcal{U}$.

        Then, for each $U \in \mathcal{U}$ there is $g_{U}  \in \mathcal{G} $ such that $h|_{U}  = \varphi   \circ g_{U}  \circ \varphi^{-1}$, and, $g_{U}|_{\varphi^{-1}(U)} \in \mathcal{G}$ for each $ U \in \mathcal{U}$.\\
        Hence, $h = \varphi  \circ  g  \circ  \varphi^{-1}$, where $g: \bigcup_{U \in \mathcal{U}} \varphi^{-1}(U) \longrightarrow \bigcup_{U      \in                            \mathcal{U}} g \circ \varphi^{-1}(U)$ is given by $g(x) = g_{U}|_{\varphi^{-1}(U)}(x),$ if $x \in \varphi^{-1}(U)$. 

        \textbf{Claim:} $g$ is well defined.

        Indeed, let $x \in \varphi^{-1}(U) \cap \varphi^{-1}(V)$, for               $U, V \in \mathcal{U}$, then $\varphi(x) \in U \cap V$ and
$h|_{U}(\varphi(x))            =  h|_{V}(\varphi(x))$, thus $$            \varphi    \circ  g_{U}|_{\varphi^{-1}(U)} \circ \varphi^{-1}(\varphi(x))= \varphi        \circ g_{V}|_{\varphi^{-1}(V)}          \circ \varphi^{-1}(\varphi(x)).$$
        implying that $g_{U}|_{\varphi^{-1}(U)}(x)  = g_{V}|_{\varphi^{-1}(V)}(x)$. Therefore, $g$ is well defined.

        Since $g|_{\varphi^{-1}(U)} \in \mathcal{G}$, for each $U$,    then $g \in \mathcal{G}$,  and  $ h \in \mathcal{H}$.
\\
        \item $id_{Y} \in  \mathcal{H}$, because $id_{Y} = \varphi \circ id_{X}   \circ            \varphi^{-1}$.
    \end{enumerate}
    Then,  $\mathcal{H}$ is a pseudogroup in $Y$.
\end{proof}

In particular, we can control the generating sets under such isomorphism.

\begin{Cor}\label{Z}
     Let $\mathcal{G} \subset \ Homeo(X)$            and $\mathcal{H} \subset \ Homeo(Y)$  strongly isomorphic finitely generated pseudogroups, with an isomorphism    $\varphi  : X \longrightarrow Y$. Hence, if                      $\mathcal{G}_{1}$ is a generating set for $\mathcal{G}$,                                   then $\mathcal{H}_{1}= \{\varphi  \circ         g \circ          \varphi^{-1}; \ g\in           \mathcal{G}_{1}  \}$ is  a generating set for $\mathcal{H}$.
\end{Cor}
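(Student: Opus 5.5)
The plan is to deduce Corollary \ref{Z} from Lemma \ref{A} by specialising the isomorphism to the single map $\Phi=\{\varphi\}$. Taking $l=1$ and $\varphi_1=\varphi$, the generating set produced by Lemma \ref{A} is $\{\varphi\circ f\circ\varphi^{-1};\ f\in\mathcal{G}_1\}$. This is exactly $\mathcal{H}_1$, so the statement follows once we check that the hypotheses of Lemma \ref{A} are met.

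For those hypotheses: a strong isomorphism is, by definition, a one-element set $\Phi=\{\varphi\}$ satisfying Definition \ref{def}. That definition gives the covering conditions $D_\varphi=X$ and $R_\varphi=Y$. It also gives the equivalence $\varphi\circ f\circ\varphi^{-1}\in\mathcal{H}\iff f\in\mathcal{G}$, which is the only input the proof of Lemma \ref{A} uses.

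As a safeguard, since the proof of Lemma \ref{A} involves index bookkeeping, I would also check the conclusion directly. Take $h\in\mathcal{H}$. By the defining equivalence, $f:=\varphi^{-1}\circ h\circ\varphi$ lies in $\mathcal{G}$, since $\varphi\circ f\circ\varphi^{-1}=h$.

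Fix $y\in D_h$ and set $x=\varphi^{-1}(y)\in D_f$. Because $\mathcal{G}_1$ generates $\mathcal{G}$, there are a neighbourhood $U$ of $x$, maps $f_1,\dots,f_n\in\mathcal{G}_1$ and signs $e_i\in\{\pm1\}$ with
\[
f|_U=f_1^{e_1}\circ\cdots\circ f_n^{e_n}|_U .
\]
Conjugating by $\varphi$ and inserting $\varphi^{-1}\circ\varphi=\mathrm{id}_X$ between consecutive factors gives
\[
h|_{\varphi(U)}=(\varphi f_1\varphi^{-1})^{e_1}\circ\cdots\circ(\varphi f_n\varphi^{-1})^{e_n}|_{\varphi(U)}.
\]
Here $(\varphi f\varphi^{-1})^{-1}=\varphi f^{-1}\varphi^{-1}$, and $\varphi(U)$ is an open neighbourhood of $y$ because $\varphi$ is a homeomorphism. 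Hence $h$ has the local form required by the definition of $\mathcal{G}(\mathcal{H}_1)$.

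The reverse inclusion $\mathcal{G}(\mathcal{H}_1)\subset\mathcal{H}$ holds because $\mathcal{H}_1\subset\mathcal{H}$ and $\mathcal{H}$ is a pseudogroup. Finally, the covering condition $\bigcup_{h\in\mathcal{H}_1}(D_h\cup R_h)=Y$ is obtained by applying $\varphi$ to the same condition for $\mathcal{G}_1$.

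The only real obstacle is bookkeeping: checking that domains match under conjugation. This is immediate here because $D_{\varphi g\varphi^{-1}}=\varphi(D_g)$ when $\varphi$ is a global homeomorphism.
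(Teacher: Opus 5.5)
Your proposal is correct and follows the paper, which proves Corollary \ref{Z} immediately from Lemma \ref{A} by taking $\Phi=\{\varphi\}$. Your direct conjugation check is also valid and makes the corollary self-contained, since it additionally covers two things the paper leaves implicit: the reverse inclusion $\mathcal{G}(\mathcal{H}_1)\subset\mathcal{H}$ and the covering condition.
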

\begin{proof}
    Immediate by Lemma \ref{A}.
\end{proof}

In what follows we will consider $(\mathcal{G}, \mathcal{G}_{1})$ and                                 $(\mathcal{H}, \mathcal{H}_{1})$ with generating sets like in Corollary \ref{Z}.

\begin{Teo}\label{iso}
 Let $(\mathcal{G}, \mathcal{G}_{1})$ and  $(\mathcal{H}, \mathcal{H}_{1})$ strongly isomorphic pseudogroups, with a isomorphism $   \varphi : X \longrightarrow Y$. If $\mu \in M(X)$   is $(\mathcal{G},\mathcal{G}_{1})$-expansive with a constant $\eta$, then there exists $\delta > 0$ such that $  \varphi_{*}\mu \in M(Y)$ is $(\mathcal{H}, \mathcal{H}_{1})$-expansive with constant $\delta$.    
\end{Teo}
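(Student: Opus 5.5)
The plan is to show that $\varphi$ carries Bowen balls of $(\mathcal{H},\mathcal{H}_{1})$ at a small scale into images of Bowen balls of $(\mathcal{G},\mathcal{G}_{1})$ at scale $\eta$. Then expansiveness transfers through the push-forward measure.

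\textbf{Choice of $\delta$.} Since $X$ is compact, $\varphi^{-1}: Y \to X$ is uniformly continuous. Choose $\delta>0$ such that $d_{Y}(a,b)\le\delta$ implies $d_{X}(\varphi^{-1}(a),\varphi^{-1}(b))\le\eta$.

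\textbf{Dictionary between the two pseudogroups.} By Corollary \ref{Z}, every element of $\mathcal{H}_{1}$ has the form $\varphi\circ g\circ\varphi^{-1}$ with $g\in\mathcal{G}_{1}$. Hence every element of $\mathcal{H}_{n}$ is of the form
$$(\varphi g_{1}\varphi^{-1})\circ\cdots\circ(\varphi g_{n}\varphi^{-1})=\varphi\circ g\circ\varphi^{-1}, \qquad g=g_{1}\circ\cdots\circ g_{n}\in\mathcal{G}_{n},$$
with domain $\varphi(D_{g})$. I will check that $h=\varphi g\varphi^{-1}\in\mathcal{H}_{n}^{\varphi(x)}$ if and only if $g\in\mathcal{G}_{n}^{x}$. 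This gives a correspondence between $\mathcal{H}_{n}^{\varphi(x)}\cap\mathcal{H}_{n}^{\varphi(y)}$ and $\mathcal{G}_{n}^{x}\cap\mathcal{G}_{n}^{y}$.

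\textbf{Key inclusion.} Let $y'=\varphi(y)\in\Phi^{\mathcal{H}}_{\delta}(\varphi(x))$ and let $g\in\mathcal{G}_{n}^{x}\cap\mathcal{G}_{n}^{y}$. Then $h=\varphi g\varphi^{-1}$ lies in $\mathcal{H}_{n}^{\varphi(x)}\cap\mathcal{H}_{n}^{\varphi(y)}$, so
$$d_{Y}(\varphi(g(x)),\varphi(g(y)))=d_{Y}(h(\varphi(x)),h(\varphi(y)))\le\delta.$$
By the choice of $\delta$, this gives $d_{X}(g(x),g(y))\le\eta$. Therefore $y\in\Phi_{\eta}(x)$, which shows
$$\varphi^{-1}\bigl(\Phi^{\mathcal{H}}_{\delta}(\varphi(x))\bigr)\subset\Phi_{\eta}(x).$$

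\textbf{Conclusion.} Every point of $Y$ is of the form $\varphi(x)$ for some $x\in X$. Hence
$$\varphi_{*}\mu\bigl(\Phi^{\mathcal{H}}_{\delta}(\varphi(x))\bigr)=\mu\bigl(\varphi^{-1}(\Phi^{\mathcal{H}}_{\delta}(\varphi(x)))\bigr)\le\mu(\Phi_{\eta}(x))=0.$$
Measurability of the sets involved follows from the Borel lemma for Bowen balls.

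The only delicate point is the dictionary step, namely that composition and domain bookkeeping commute with conjugation by $\varphi$. This is straightforward because $\varphi$ is a global homeomorphism and $D_{\varphi g\varphi^{-1}}=\varphi(D_{g})$. I expect no real obstacle beyond writing this carefully.
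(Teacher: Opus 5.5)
Your proposal is correct and follows essentially the same route as the paper. Both choose $\delta$ via uniform continuity of $\varphi^{-1}$, prove $\varphi^{-1}(\Phi_{\delta}(\varphi(x)))\subset\Phi_{\eta}(x)$ using the conjugated maps $h=\varphi\circ g\circ\varphi^{-1}$ from Corollary~\ref{Z}, and then push the measure forward. Your non-strict form of the continuity estimate is only a harmless variant of the paper's choice $\delta<\delta'$.
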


\begin{proof}
       Let   $\eta$ the expansiveness constant of $\mu$. Since $X$ and $Y$ are compact and $   \varphi$ is an homeomorphism, follows that $\varphi^{-1}$ is uniformly continuous, that is, for $\eta > 0$ there exists $\delta' > 0$ such that 
       \begin{equation*}
           d_{Y}(x',y') < \delta' \Rightarrow d_{X}(\varphi^{-1}(x'),  \varphi^{-1}(y'))  < \eta.
       \end{equation*}
       Take $                                               \delta                      < \delta'$ and let $x \in X$ any.

       \textbf{Claim:}  $\varphi^{-1}(\Phi_{\delta}(\varphi(x)))  \subset \Phi_{\eta}(x)$.

       Indeed, let               $z \in               \varphi^{-1}(\Phi_{\delta}(\varphi(x)))$ and                 $ g \in \mathcal{G}_{n}^{x}  \cap \mathcal{G}_{n}^{z}$, for          some $   n \in \mathbb{N}$.

       Take $h = \varphi \circ g \circ \varphi^{-1}$ and note that $h \in \mathcal{H}_{n}^{\varphi(x)}  \cap \mathcal{H}_{n}^{\varphi(z)}$, by Corollary \ref{Z}. Hence,  since $\varphi(z) \in \Phi_{\delta}(\varphi(x))$,           we have,
\begin{center}
    $
    \begin{array}{ccl}
         d(h(\varphi(x)),  h(\varphi(z))) \leq \delta          & \Rightarrow   & d(\varphi \circ g \circ      \varphi^{-1}(\varphi(x)),                                   \varphi \circ g \circ \varphi^{-1}( \varphi(z))) \leq \delta \\
         \\
           &  \Rightarrow                                & d(\varphi  \circ   g(x),  \varphi \circ g(z)) \leq \delta  < \delta'\\
           \\
           &   \Rightarrow           & d(\varphi^{-1} \circ \varphi \circ g(x), \varphi^{-1}  \circ \varphi \circ g(z))                        < \eta\\
           \\
           &  \Rightarrow       & d(g(x), g(z))  < \eta
\end{array}
    $
\end{center}

Hence, by the arbitrariness of $g$ follows that $z \in    \Phi_{\eta}(x)$, completing the claim.

Therefore, by the above claim, we have 
$$
\varphi_{*}\mu(\Phi_{\delta}(\varphi(x))) = \mu(\varphi^{-1}(\Phi_{\delta}(\varphi(x))))  \leq \mu(\Phi_{\eta}(x)) = 0
$$
for every $x \in X$.
So, $\varphi_{*}\mu$ is   expansive with constant $\delta$.
\end{proof}

We also can compare the entropy of measures under strong isomorphisms.

\begin{Prop}\label{ent}
    Let        $(\mathcal{G},\mathcal{G}_{1})$ and $(\mathcal{H}, \mathcal{H}_{1})$ strongly isomorphic finitely generated pseudogroups with an isomorphism $\varphi$. Then, 
  \begin{equation*}
           \underline{h}_{\mu}((\mathcal{G}, \mathcal{G}_{1}),x)= \underline{h}_{ \varphi_{*}\mu}((\mathcal{H}, \mathcal{H}_{1}),
        \varphi(x)),
    \end{equation*}
    for  every $x \in     X.$
\end{Prop}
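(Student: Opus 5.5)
The plan is to compare dynamical balls directly, in the same spirit as the Claim in the proof of Theorem \ref{iso}, but now at the level of finite $n$ and in both directions. The goal is a pair of inclusions: for every $\varepsilon>0$ there is $\delta(\varepsilon)>0$, with $\delta(\varepsilon)\to 0$ as $\varepsilon\to 0$, such that for every $n$ and $x\in X$
\begin{equation*}
\varphi\big(B_{n}^{\mathcal{G}}(x,\delta(\varepsilon))\big)\subset B_{n}^{\mathcal{H}}(\varphi(x),\varepsilon)
\quad\text{and}\quad
\varphi^{-1}\big(B_{n}^{\mathcal{H}}(\varphi(x),\delta(\varepsilon))\big)\subset B_{n}^{\mathcal{G}}(x,\varepsilon).
\end{equation*}
Both inclusions follow from the uniform continuity of $\varphi$ and $\varphi^{-1}$, which holds by compactness of $X$ and $Y$.

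The first key step is to identify the words of length $n$. By Corollary \ref{Z}, $\mathcal{H}_{1}=\{\varphi\circ g\circ\varphi^{-1};\ g\in\mathcal{G}_{1}\}$. A word $h=h_{1}\circ\cdots\circ h_{n}$ with $h_i=\varphi\circ g_i\circ\varphi^{-1}$ telescopes to $\varphi\circ(g_1\circ\cdots\circ g_n)\circ\varphi^{-1}$, and its domain is $\varphi(D_{g_1\circ\cdots\circ g_n})$. So $h\mapsto \varphi^{-1}\circ h\circ\varphi$ is a bijection $\mathcal{H}_{n}\to\mathcal{G}_{n}$, and $h\in\mathcal{H}_{n}^{\varphi(x)}\cap\mathcal{H}_{n}^{\varphi(y)}$ if and only if $g\in\mathcal{G}_{n}^{x}\cap\mathcal{G}_{n}^{y}$.

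With this identification the inclusions are immediate. Take $y\in B_n^{\mathcal{G}}(x,\delta)$ and such an $h$. Then $d(g(x),g(y))<\delta$, so uniform continuity of $\varphi$ gives $d(h\varphi(x),h\varphi(y))=d(\varphi g(x),\varphi g(y))<\varepsilon$. The reverse inclusion is the same argument with $\varphi^{-1}$. Together with $\varphi_*\mu(\varphi(A))=\mu(A)$, this yields
\begin{equation*}
\mu\big(B_{n}^{\mathcal{G}}(x,\delta(\varepsilon))\big)\le \varphi_{*}\mu\big(B_{n}^{\mathcal{H}}(\varphi(x),\varepsilon)\big),\qquad
\varphi_{*}\mu\big(B_{n}^{\mathcal{H}}(\varphi(x),\delta(\varepsilon))\big)\le \mu\big(B_{n}^{\mathcal{G}}(x,\varepsilon)\big).
\end{equation*}
Apply $-\frac1n\log$ and take $\liminf_n$. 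Then let $\varepsilon\to0$, noting $\delta(\varepsilon)\to0$ (we may choose $\delta(\varepsilon)\le\varepsilon$). Since the inner quantities are monotone in the radius, the limits defining $\underline{h}$ exist, and evaluating them along $\delta(\varepsilon)$ gives $\underline{h}_{\mu}((\mathcal{G},\mathcal{G}_1),x)\ge \underline{h}_{\varphi_*\mu}((\mathcal{H},\mathcal{H}_1),\varphi(x))$, together with the reverse inequality.

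The main obstacle is the bookkeeping of domains in the first step. We must check that the correspondence respects the sets $\mathcal{G}_n^x$ exactly, and not only up to restriction, so that the ``for all $g$ in the common domain'' quantifier transfers in both directions. Since $\varphi$ is a global homeomorphism, $D_{\varphi g\varphi^{-1}}=\varphi(D_g)$ exactly, and this is what makes the identification work. A minor point is that the radius-monotonicity of $\liminf_n -\frac1n\log\mu(B_n(x,\varepsilon))$ justifies passing to the limit along the reparametrized radii $\delta(\varepsilon)$.
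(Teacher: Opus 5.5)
Your proposal is correct and follows essentially the same route as the paper: the paper also proves two inclusions of dynamical balls, $\varphi^{-1}(B_{n}^{\mathcal{H}}(\varphi(x),\delta))\subset B_{n}^{\mathcal{G}}(x,\varepsilon)$ and its counterpart obtained by arguing with $\varphi^{-1}$, both from uniform continuity, and then applies $-\frac{1}{n}\log$, takes $\liminf_n$, and lets $\varepsilon\to 0$. Your explicit check that conjugation by $\varphi$ identifies $\mathcal{H}_{n}^{\varphi(x)}\cap\mathcal{H}_{n}^{\varphi(y)}$ with $\mathcal{G}_{n}^{x}\cap\mathcal{G}_{n}^{y}$, and your use of monotonicity in the radius to justify passing to the limit along $\delta(\varepsilon)$, supply details that the paper leaves implicit.
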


\begin{proof}
    Similarly as proved before, for every $\eps  > 0$ there exists              $\delta = \delta(\eps) > 0$ such that $\varphi^{-1}(B_{n}^{\mathcal{H}}(\varphi(x)),            \delta))   \subset                            B_{n}^{\mathcal{G}}(x,   \eps)$. Then, 
    we have 
\begin{equation*}
     \mu(\varphi^{-1}(B_{n}^{\mathcal{H}}(\varphi(x)),            \delta))) \leq                            \mu(B_{n}^{\mathcal{G}}(x,   \eps))
\end{equation*}
       thus  
         \begin{equation*}
                  \frac{-1}{n} \log \varphi_{*}\mu(B_{n}^{\mathcal{H}}(\varphi(x)),            \delta))) \geq  \frac{-1}{n}  \log \mu(B_{n}^{\mathcal{G}}(x,   \eps))
         \end{equation*}
         so
         \begin{equation*}
          \underline{h}_{\varphi_{*}\mu}((\mathcal{H}, \mathcal{H}_{1}),                                                  \varphi(x))  \geq  \underline{h}_{\mu}((\mathcal{G}, \mathcal{G}_{1}),x).
  \end{equation*}
\vspace{0.3cm}

Analogously, arguing with $\varphi^{-1}$ instead of $\varphi$, we see that for every $\eps>0$ there exists a $\delta>0$ such that $B_{n}^{\mathcal{G}}(x,   \delta) \subset \varphi^{-1}(B_{n}^{\mathcal{H}}(\varphi(x),            \eps))                              $.

%

Hence,  we conclude that $\mu( B_{n}^{\mathcal{G}}(x,   \delta)) \leq \mu(\varphi^{-1}(B_{n}^{\mathcal{H}}(\varphi(x), \eps))) $. Hence,
$$\underline{h}_{\mu}((\mathcal{G}, \mathcal{G}_{1}),x) \geq \underline{h}_{\varphi_{*}\mu}((\mathcal{H}, \mathcal{H}_{1}),
        \varphi(x)).$$
The proposition follows.

\end{proof}

\begin{mar}
    The same result follows for the local upper measure entropy, with the same proof.
\end{mar}

We also have a similar result for the topological entropy.

\begin{Prop}\label{top}
     Let        $(\mathcal{G},\mathcal{G}_{1})$ and $(\mathcal{H}, \mathcal{H}_{1})$ strongly isomorphic finitely generated pseudogroups with an isomorphism $\varphi$. Then,
\begin{equation*}
    h_{top}(\mathcal{G}, \mathcal{G}_{1}) =  h_{top}(\mathcal{H}, \mathcal{H}_{1}).
\end{equation*}

\end{Prop}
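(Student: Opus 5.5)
The plan is to follow the pattern of Proposition \ref{ent} and Theorem \ref{iso}: use uniform continuity of $\varphi$ and $\varphi^{-1}$ on the compact spaces to transport separated sets from one pseudogroup to the other. The result is then two inequalities between the numbers $s(\cdot,n,\cdot)$, with a change of scale and no change in $n$.

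First I would record the combinatorial fact behind everything. Since the generating set is $\mathcal{H}_{1}=\{\varphi\circ g\circ\varphi^{-1};\ g\in\mathcal{G}_{1}\}$ (Corollary \ref{Z}), conjugation is compatible with composition:
\[
\varphi\circ g_{1}\circ\cdots\circ g_{n}\circ\varphi^{-1}=(\varphi g_{1}\varphi^{-1})\circ\cdots\circ(\varphi g_{n}\varphi^{-1}).
\]
Hence the map $g\mapsto \varphi\circ g\circ\varphi^{-1}$ is a bijection from $\mathcal{G}_{n}$ onto $\mathcal{H}_{n}$, and its domains satisfy $D_{\varphi g\varphi^{-1}}=\varphi(D_g)$. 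It follows that $g\in\mathcal{G}_{n}^{x}\cap\mathcal{G}_{n}^{z}$ if and only if $h=\varphi g\varphi^{-1}\in\mathcal{H}_{n}^{\varphi(x)}\cap\mathcal{H}_{n}^{\varphi(z)}$.

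Next, fix $\eps>0$ and use uniform continuity of $\varphi^{-1}$ to choose $\delta>0$ with
\[
d_Y(a,b)<\delta\Rightarrow d_X(\varphi^{-1}(a),\varphi^{-1}(b))<\eps.
\]
Let $E$ be an $((\mathcal{G},\mathcal{G}_{1}),n,\eps)$-separated set. For distinct $x,z\in E$ there is $g\in\mathcal{G}_n^x\cap\mathcal{G}_n^z$ with $d(g(x),g(z))\ge\eps$. By the choice of $\delta$, the corresponding $h$ satisfies $d(h(\varphi x),h(\varphi z))\ge\delta$. So $\varphi(E)$ is $((\mathcal{H},\mathcal{H}_{1}),n,\delta)$-separated, and since $\varphi$ is injective it has the same cardinality. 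This gives
\[
s((\mathcal{G},\mathcal{G}_1),n,\eps)\le s((\mathcal{H},\mathcal{H}_1),n,\delta(\eps)).
\]
Take $\frac1n\log$ and $\limsup_n$. Then let $\eps\to0$, noting that $\delta(\eps)$ may be chosen with $\delta(\eps)\to0$. Since $\limsup_n\frac1n\log s(\cdot,n,\cdot)$ is monotone nonincreasing in the scale, its limit as the scale tends to $0$ exists, possibly $+\infty$. Therefore $h_{top}(\mathcal{G},\mathcal{G}_1)\le h_{top}(\mathcal{H},\mathcal{H}_1)$. Exchanging the roles of the two pseudogroups and using uniform continuity of $\varphi$ gives the reverse inequality.

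The only step needing care is the correspondence between $\mathcal{G}_n^{x}\cap\mathcal{G}_n^{z}$ and $\mathcal{H}_n^{\varphi x}\cap\mathcal{H}_n^{\varphi z}$, which means checking domains of compositions. Since $D_{h\circ g}=g^{-1}(D_h)$, this reduces to the formula $D_{\varphi g\varphi^{-1}}=\varphi(D_g)$. The second point to check is the monotonicity in the scale that justifies passing to the limit; it holds because an $\eps'$-separated set is $\eps$-separated whenever $\eps<\eps'$.
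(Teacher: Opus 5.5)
Your proposal is correct and follows essentially the same route as the paper. Uniform continuity of $\varphi^{-1}$ gives $\delta=\delta(\eps)$ such that $\varphi$ carries $((\mathcal{G},\mathcal{G}_1),n,\eps)$-separated sets to $((\mathcal{H},\mathcal{H}_1),n,\delta)$-separated sets through the correspondence $g\leftrightarrow\varphi\circ g\circ\varphi^{-1}$ of Corollary \ref{Z}, hence $s((\mathcal{G},\mathcal{G}_1),n,\eps)\le s((\mathcal{H},\mathcal{H}_1),n,\delta)$, and the reverse inequality follows by symmetry. Your explicit checks of $D_{\varphi g\varphi^{-1}}=\varphi(D_g)$ and of monotonicity in the scale only make precise steps the paper leaves implicit.
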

\begin{proof}
    We will show that $\varphi$ sends a separated subset of $(\mathcal{G},            \mathcal{G}_{1})$ in a separated subset of $(\mathcal{H},            \mathcal{H}_{1})$, and vice versa.

    Given $\eps>0$, we will show that there exists $\delta>0$ such that, if $E  \subset X$ is a $(n, \eps)-$separated subset by $(\mathcal{G},            \mathcal{G}_{1})$ then the subset $\varphi E:= \{        \varphi(x); \ x \in E\}$   is $(n, \delta)-$separated by $(\mathcal{H},            \mathcal{H}_{1})$.     

    Indeed, by the uniform continuity of $\varphi^{-1}$, for              $\eps    >0$ there exists $\delta> 0 $ such                          that for every $x',y'\in Y$ we have
    \begin{equation*}
        d(\varphi^{-1}(x'), \varphi^{-1}(y')) \geq \eps \Rightarrow d(x', y')  \geq \delta.
    \end{equation*}
    Let, $\varphi(x), \varphi(y) \in \varphi E$ be two distinct points. Since $x, y \in E$ are distinct, also,  there exists $g \in \mathcal{G}_{n}^{x} \cap \mathcal{G}_{n}^{y} $ such that $d(g(x), g(y)) \geq \eps$. 

    But, by Corollary \ref{Z},  there exists $h \in \mathcal{H}_{n}^{\varphi(x)}  \cap \mathcal{H}_{n}^{\varphi(y)}$  such that     $g = \varphi^{-1} \circ h \circ \varphi$. Hence, 
\begin{equation*}
    d(\varphi^{-1} \circ h \circ \varphi(x), \varphi^{-1} \circ h \circ \varphi(y))  \geq \eps \Rightarrow d( h \circ \varphi(x),  h \circ \varphi(y)) \geq \delta.
\end{equation*}
Therefore, $\varphi(x)$ and $\varphi(y)$ are $(n,          \delta)-$separated by $(\mathcal{H},            \mathcal{H}_{1})$. Also, since $\varphi$ is a bijection we have $\# E = \# \varphi E$. And, it follows that $s((\mathcal{G}, \mathcal{G}_{1}), n, \eps)  \leq s((\mathcal{H}, \mathcal{H}_{1}), n, \delta)$. So,
\begin{equation}\label{'}
    \limsup           \frac{1}{n} \log s((\mathcal{G},            \mathcal{G}_{1}), n, \eps)  \leq  \limsup           \frac{1}{n} \log s((\mathcal{H},            \mathcal{H}_{1}), n, \delta). 
\end{equation}
The other inequality follows with the exact same arguments using $\varphi^{-1}$. So, for every $\eps>0$ there exists $\delta'>0$, such that 
\begin{equation}\label{*}
    \limsup           \frac{1}{n} \log s((\mathcal{G},            \mathcal{G}_{1}), n, \delta')  \geq  \limsup           \frac{1}{n} \log s((\mathcal{H},            \mathcal{H}_{1}), n, \eps).
\end{equation}
Then, the proposition follows taking limits.
\end{proof}

\section{Homogeneous Measures and Expansiveness}
\label{s.arbmor}

In this section, we prove among other results  Theorem B, which is a criterion for the expansiveness of homogeneous ergodic and invariant measures. 

The next definition was presented in \cite{AB}, and shows a class of measures for what the local upper measure entropy assumes a single value for every point, and the same happens for the local   lower measure entropy.

\begin{Def}\cite{AB}
    Let  $(\mathcal{G},   \mathcal{G}_{1}) 
    $ be a finitely generated pseudogroup.  A Borelian           measure   $\mu$  
        is said              to be   $\mathcal{G}-$homogeneous if 

    \begin{enumerate}
        \item $\mu(K) < \infty,$  for every compact subset $ K \subset X$ ;

        \item There exist  a compact $K_{0} \subset K$  such that $ \mu(K_{0})>0$;

\item For     every  $\varepsilon > 0$, there exist $ \delta>0$  and  $c>0$  such that
\begin{equation*}
    \mu(B_{n}^{\mathcal{G}}(y, \delta)) \leq c\mu(B_{n}^{\mathcal{G}}(x, \eps))
    \end{equation*}
    \end{enumerate}
    for every $ n \in \mathbb{N}$ and $x, y \in X$.
\end{Def}

The following result on the entropy of homogeneous measures was obtained in \cite{AB}. Actually, the same proof can be applied, since our dynamical balls are slightly different from the ones in \cite{AB}.

\begin{Lema}[Lemma 4.10 of \cite{AB}]
    If  $\mu$ is  $\mathcal{G}-$homogeneous  in X,                 then 
$$       \overline{h}_{\mu}((\mathcal{G}, \mathcal{G}_{1}),x)  =  \overline{h}_{\mu}((\mathcal{G}, \mathcal{G}_{1}),y)
\textrm{    and }
         \underline{h}_{\mu}((\mathcal{G}, \mathcal{G}_{1}),x)  =  \underline{h}_{\mu}((\mathcal{G}, \mathcal{G}_{1}),y),$$
    for every $x,  y \in X$.
    \end{Lema}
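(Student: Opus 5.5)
The plan is to use the homogeneity inequality twice, once with the roles of $x$ and $y$ swapped, and to pass to the limits in the definitions of the local entropies. Fix $x,y\in X$ and $\eps>0$. Condition (3) in the definition of a $\mathcal{G}$-homogeneous measure gives $\delta=\delta(\eps)>0$ and $c>0$ such that
\begin{equation*}
\mu(B_{n}^{\mathcal{G}}(y,\delta))\le c\,\mu(B_{n}^{\mathcal{G}}(x,\eps))
\end{equation*}
for every $n\in\mathbb{N}$. Taking $-\frac1n\log$ of both sides gives
\begin{equation*}
-\tfrac1n\log\mu(B_{n}^{\mathcal{G}}(x,\eps))\le -\tfrac1n\log\mu(B_{n}^{\mathcal{G}}(y,\delta))+\tfrac1n\log c .
\end{equation*}
The term $\frac1n\log c$ tends to $0$. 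Since the inequality holds for all $n$, we may take either $\limsup_{n}$ or $\liminf_{n}$ of both sides.

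Next I want to compare the scales. The balls $B_{n}^{\mathcal{G}}(x,\eps)$ increase with $\eps$, so the quantities $\liminf_n$ and $\limsup_n$ of $-\frac1n\log\mu(B_n^{\mathcal{G}}(x,\eps))$ are nonincreasing in $\eps$. This monotonicity is why the limits as $\eps\to0$ exist, possibly equal to $+\infty$. I may also assume $\delta(\eps)\le\eps$, shrinking $\delta$ if necessary, because a smaller $\delta$ only decreases the left-hand side of (3). Then $\delta(\eps)\to0$ as $\eps\to0$, and monotonicity yields
\begin{equation*}
\limsup_n -\tfrac1n\log\mu(B_{n}^{\mathcal{G}}(x,\eps))\le \limsup_n -\tfrac1n\log\mu(B_{n}^{\mathcal{G}}(y,\delta(\eps)))\le \overline{h}_{\mu}((\mathcal{G},\mathcal{G}_{1}),y).
\end{equation*}
The last step uses that the expression at scale $\delta(\eps)$ is bounded by its limit as the scale goes to $0$, again by monotonicity. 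Letting $\eps\to0$ gives $\overline{h}_{\mu}(x)\le\overline{h}_{\mu}(y)$, and the same argument with $\liminf$ gives the lower entropy inequality.

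Exchanging $x$ and $y$ gives the reverse inequalities, because condition (3) is stated for all pairs $x,y\in X$. This proves both equalities.

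The main obstacle is making sure the dynamical balls still behave as the argument requires. They must be Borel, which is the earlier Lemma. They must be monotone in the radius, which is clear from the definition. Finally, their measure must be positive so that the logarithms are finite. If $\mu(B_n^{\mathcal{G}}(x,\eps))=0$, I would adopt the convention $-\log 0=+\infty$. The inequalities remain valid in $[0,+\infty]$, since homogeneity forces $\mu(B_n^{\mathcal{G}}(y,\delta))=0$ as well. Conditions (1) and (2) are not needed beyond what is required to make sense of the measure.
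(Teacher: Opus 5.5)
Your proof is correct. It is essentially the standard argument behind Lemma 4.10 of \cite{AB}, which the paper cites without reproducing: apply the homogeneity inequality, take $-\frac1n\log$, pass to $\limsup_n$ or $\liminf_n$ using monotonicity in the radius, and then exchange $x$ and $y$. Shrinking $\delta(\eps)$ to $\delta(\eps)\le\eps$ is harmless but unnecessary, since by monotonicity the bound by $\overline{h}_{\mu}((\mathcal{G},\mathcal{G}_{1}),y)$ already holds at every scale $\delta>0$.
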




In the next result we present Theorem B as stated in the introduction.

\begin{Teo}[Criterion for Weakly Expansive Measures]\label{89}
       Every ergodic invariant and  $(\mathcal{G}, \mathcal{G}_{1})  -$homogeneous measure of a pseudogroup $( \mathcal{G},  \mathcal{G}_{1})$ with  positive  local upper measure entropy is $(\mathcal{G}, \mathcal{G}_{1})$-weakly expansive.
\end{Teo}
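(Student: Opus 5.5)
We argue by contradiction, following the scheme of Arbieto--Morales \cite{AM}. Suppose $\mu$ is ergodic, invariant and $(\mathcal{G},\mathcal{G}_1)$-homogeneous with $\overline{h}_{\mu}((\mathcal{G},\mathcal{G}_1),x)=h>0$; by Lemma 4.10 of \cite{AB} this value does not depend on $x$. If $\mu$ is not weakly expansive, then for every $\delta>0$ the set
\[
A_\delta=\{x\in X;\ \mu(\Phi_\delta(x))>0\}
\]
has positive measure. The first step is to show that $A_\delta$ is $\mathcal{G}_1$-invariant up to shrinking the scale, and then to use ergodicity together with Proposition \ref{.} to obtain $\mu(A_\delta)=1$. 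For $g\in\mathcal{G}_1$ and $x\in D_g$, if $y\in\Phi_\delta(x)\cap D_g$ then $g(y)$ shadows $g(x)$ under all words of $\mathcal{G}_1$ that are defined at both points, since $\mathcal{G}_n\circ g\subset\mathcal{G}_{n+1}$. Hence $g(\Phi_\delta(x)\cap D_g)\subset \Phi_\delta(g(x))$, and invariance of $\mu$ gives $\mu(\Phi_\delta(g(x)))\ge\mu(\Phi_\delta(x)\cap D_g)$. The \emph{difficulty} is that $\Phi_\delta(x)\cap D_g$ may have zero measure even though $\Phi_\delta(x)$ does not. To handle this, we will avoid requiring exact invariance of $A_\delta$. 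Instead we take the $\mathcal{G}$-saturation $\bigcup_{g\in\mathcal{G}} g(A_\delta\cap D_g)$, which is countable once we restrict to the words in $\bigcup_n\mathcal{G}_n$. This saturation is invariant by Proposition \ref{.}, so it has full measure by ergodicity. In the end we need only one point $x$ with $\mu(\Phi_\delta(x))>0$, so the main use of ergodicity is only to produce a positive-measure supply of such points.

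The second step uses homogeneity to turn a single ``fat'' Bowen ball into a bound on the measure of \emph{every} dynamical ball. Fix a small $\varepsilon>0$ and take $\delta,c$ from the homogeneity condition (3). Pick $x_0$ with $m:=\mu(\Phi_\delta(x_0))>0$ (such a point exists for this $\delta$ by the contradiction hypothesis). Since $\Phi_\delta(x_0)\subset B_n^{\mathcal{G}}[x_0,\delta]\subset B_n^{\mathcal{G}}(x_0,2\delta)$, we first apply homogeneity at scale $2\delta$ (adjusting the constants so that condition (3) is used with the pair $2\delta,\varepsilon$). This gives, for every $y\in X$ and $n\in\mathbb{N}$,
\[
0<m\le\mu(B_n^{\mathcal{G}}(x_0,2\delta))\le c\,\mu(B_n^{\mathcal{G}}(y,\varepsilon)).
\]
Therefore
\[
-\frac1n\log\mu(B_n^{\mathcal{G}}(y,\varepsilon))\le\frac1n\log\frac{c}{m}\to0,
\]
so the $\limsup$ at scale $\varepsilon$ vanishes. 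Since $\varepsilon$ is arbitrary and $\delta=\delta(\varepsilon)$ can be chosen as small as we like (the contradiction hypothesis holds for every $\delta$), letting $\varepsilon\to0$ gives $\overline{h}_\mu((\mathcal{G},\mathcal{G}_1),y)=0$ for all $y$. This contradicts positivity.

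The main obstacle is matching the scales in the homogeneity condition. Condition (3) only gives, for each $\varepsilon$, \emph{some} $\delta$, and we need $\mu(\Phi_{\delta'}(x_0))>0$ at a scale $\delta'$ with $B_n[x_0,\delta']\subset B_n(x_0,\delta)$. I would take $\delta'=\delta/2$, which is legitimate because non-weak-expansiveness gives fat Bowen balls at every scale. Seen this way, ergodicity and invariance are barely needed, but they make the statement consistent with \cite{AM} and guarantee that the fat Bowen balls occur on a set of positive measure.
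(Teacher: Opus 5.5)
Your proof is correct, but it takes a genuinely different route from the paper's, and only your second step does any work.

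\textbf{The paper's route.} The paper shows that $X_\delta=\{x;\ \mu(\Phi_\delta(x))=0\}$ is $\mathcal{G}$-invariant and uses ergodicity to get $\mu(X_\delta)\in\{0,1\}$. It then uses only the \emph{constancy} of $\overline{h}_\mu$, a consequence of homogeneity, to find a set $X^{m_0}$ of positive measure on which $\mu(B_{n_k}^{\mathcal{G}}[x,1/m_0])\to 0$. Hence $X^{m_0}\subset X_\delta$.

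\textbf{Your route.} You use the full two-centre comparison in condition (3). Fix $\varepsilon$ and take $\delta=\delta(\varepsilon)$ and $c$. A single point $x_0$ with $m=\mu(\Phi_{\delta/2}(x_0))>0$ then forces $\mu(B_n^{\mathcal{G}}(y,\varepsilon))\ge m/c$ for all $y\in X$ and all $n$. So the $\limsup$ at scale $\varepsilon$ vanishes everywhere.

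\textbf{What each route buys.} Your argument needs no ergodicity, no invariance of $\mu$, and no Lebesgue number, and positivity of $\overline{h}_\mu$ at a single point suffices. The only thing you extract from the contradiction hypothesis is ``for every $\delta>0$ some $x_0$ has $\mu(\Phi_\delta(x_0))>0$''. That is exactly the negation of expansiveness, so you in fact prove that $\mu$ is $(\mathcal{G},\mathcal{G}_1)$-\emph{expansive}, not merely weakly expansive.

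Your route also avoids the invariance of $X_\delta$, which is the delicate point you correctly identified. From $g_i(x)\in X_\delta$ one only obtains $\mu(\Phi_\delta(x)\cap D_{g_i})=0$. Since $id_X\in\mathcal{G}_1$, the paper's covering Claim is satisfied trivially by $g_i=id_X$, so it does not settle this.

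What the paper's scheme buys is that it follows the Arbieto--Morales template: an invariant set of points with null Bowen balls, ergodicity, and a.e.\ positive local entropy. That template is what survives when homogeneity is dropped, as in the amenable group case via Brin--Katok.

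\textbf{Two clean-ups for the write-up.} First, delete the first step. The saturation argument is not rigorous as stated: measurability of $A_\delta$ is not addressed, and full measure of the saturation says nothing about $A_\delta$ itself. It is also never used. Second, state the quantifiers in the order of your last paragraph:
\begin{itemize}
\item fix $\varepsilon$;
\item take $\delta(\varepsilon)$ and $c$ from condition (3);
\item pick $x_0$ with $\mu(\Phi_{\delta(\varepsilon)/2}(x_0))>0$;
\item use $\Phi_{\delta(\varepsilon)/2}(x_0)\subset B_n^{\mathcal{G}}[x_0,\delta(\varepsilon)/2]\subset B_n^{\mathcal{G}}(x_0,\delta(\varepsilon))$.
\end{itemize}
This replaces ``adjusting the constants'' to the pair $2\delta,\varepsilon$, which reads as though $\delta$ were chosen before $\varepsilon$.
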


\begin{proof}
    Let $\mu$     an ergodic measure   $\mathcal{G}-$homogeneous by the  pseudogrup $( \mathcal{G},  \mathcal{G}_{1})$.   Let $\delta > 0$ and define
  \begin{equation*}
        X_{\delta}  = \{ x \in X; \ \mu(\Phi_{\delta}(x)) = 0\}.
    \end{equation*}
   We have to show that there exists  $\delta > 0$ such                        that $\mu(X_{\delta}) = 1 $. To do this, 
we will  prove that $X_{\delta}$ is $\mathcal{G}$-invariant.   

By Proposition \ref{.}, it is enough to show that
$$g_{i}^{ -1} (X_{\delta} \cap D_{g_{i}^{-1}})  \subset X_{\delta}, \ \forall g_{i} \in  \mathcal{G}_{1}.$$
Recall that the generating set is symmetric. Let $\delta_{0} > 0$  be the     Lebesgue number of the open covering $\{D_{g}; \ g \in \mathcal{G}_{1}\}$ of $X$ and take $0<\delta < \delta_{0}$.

\textbf{Claim:} $   \Phi_{\delta}(x)          \subset         \bigcup_{g_{i} \in \mathcal{G}_{1}} g_{i}^{ -1} (                 \Phi_{\delta}(g_{i}(x)) \cap D_{g_{i}^{-1}})                        $.

To prove the Claim, let $z \in                \Phi_{\delta}(x) $. Then, 
\begin{equation}\label{78}
     d(h(x), h(z)) \leq \delta, \              \forall h \in      \mathcal{G}_{n}^{x} \cap  \mathcal{G}_{n}^{z},\                               \forall n \in \mathbb{N};
\end{equation}
In particular for $h =  id_{X}$, we obtain 
\begin{equation*}
    d(x, z)         \leq \delta.
\end{equation*}
Hence, using the Lebesgue number, there exists $g_{i}  \in  \mathcal{G}_{1}$      such that   $x, z \in D_{g_{i}}$.

Now, let   $f \in \mathcal{G}_{n}^{g_{i}(x)} \cap  \mathcal{G}_{n}^{g_{i}(z)}$,                          for some $n \in \mathbb{N}$ and take $h = f \circ g_{i}$. Note that $h \in  \mathcal{G}_{n+1}^{x} \cap  \mathcal{G}_{n+1}^{z}$. Hence by (\ref{78}) we have 
\begin{equation*}
    d(f(g_{i}(x)), f(g_{i}(z))) \leq \delta,
\end{equation*}
therefore, $z \in g_{i}^{-1}(\Phi_{\delta}(g_{i}(x))  \cap D_{g_{i}^{-1}})$, concluding the proof of the Claim.

So, taking $x \in g_{i}^{-1}(X_{\delta}  \cap D_{g_{i}^{-1}})$,               we have by $\mathcal{G}-$invariance of $\mu$ 
\begin{center}
    $
    \begin{array}{ccl}
         g_{i}(x) \in X_{\delta} &  \Longrightarrow & \mu(\Phi_{\delta}(g_{i}(x))) = 0\\
         \\
          &  \Longrightarrow & \mu(\Phi_{\delta}(g_{i}(x)) \cap D_{g_{i}^{-1}}) = 0\\
          \\
           &  \Longrightarrow & \mu(g_{i}^{-1}(\Phi_{\delta}(g_{i}(x)) \cap D_{g_{i}^{-1}})) = 0\\
           \\
            &  \Longrightarrow & \mu(\Phi_{\delta}(x)) = 0\\
    \end{array}
    $
\end{center}
therefore $x                      \in X_{\delta}$, showing that $X_{\delta}$ is $\mathcal{G}-$invariant.

Then,                                     since $\mu$ is ergodic, it follows that  $\mu(X_{\delta}  ) = 0 $ or 1.
Consider the function 
\begin{equation*}
\phi_{\delta}(x)  := \limsup_{n \to  \infty}    \frac{-1}{n} \log     \mu(B_{n}^{\mathcal{G}}[x, \delta]). 
\end{equation*}
For every $m \in \mathbb{N}^{+}$, let 
\begin{equation*}
          X^{m} = \{x \in X; \ \phi_{\frac{1}{m}}(x) > \frac{\overline{h}_{\mu}(\mathcal{G}, \mathcal{G}_{1}) }{2} =: h \}.
\end{equation*}
Note that, if $\delta < \delta'$ then $\phi_{\delta'}(x)  <    \phi_{\delta}(x)$. Hence,
\begin{equation*}
    A:= \{ x \in   X;  \  \sup_{\delta > 0}      \phi_{\delta}(x) = \overline{h}_{\mu}(\mathcal{G}, \mathcal{G}_{1}) \} \subset \bigcup_m X^{m}. 
\end{equation*}
  Since the measure is homogeneous, we have  $A = X$. Therefore, there exists an integer $m_0$ such that $\mu(X^{m_0}) > 0$.  Then, 
\begin{center}
$
\begin{array}{ccl}
x \in X^{m_0} & \Rightarrow & \phi_{\frac{1}{m_0}}(x) > h \\
\\
& \Rightarrow  & \displaystyle\limsup_{n \to  \infty}    \frac{-1}{n} \log     \mu(B_{n}^{\mathcal{G}}[x, \frac{1}{m_0}])   > h   \\
\\
& \Rightarrow  & \exists \  n_{k} \ \text{subsequence such that}  \  \frac{-1}{n_{k}} \log     \mu(B_{n_{k}}^{\mathcal{G}}[x, \frac{1}{m_0}])   > h     \\
\\
&  \Rightarrow  & \log     \mu(B_{n_{k}}^{\mathcal{G}}[x, \frac{1}{m_0}])    < -hn_{k}  \\
\\
&  \Rightarrow  &  \mu(B_{n_{k}}^{\mathcal{G}}[x, \frac{1}{m_0}])    < e^{-hn_{k}}\\
\\
& \Rightarrow  & \displaystyle\lim_{k  \to \infty } \mu(B_{n_{k}}^{\mathcal{G}}[x, \frac{1}{m_0}])  = 0.
\end{array}
$
\end{center}

\vspace{0.3cm}

Take                     $ 0<\delta < \min\{\frac{1}{m_0}, \delta_{0}\}$. Hence, since $B_{n_{k}}^{\mathcal{G}}[x, \delta]   \subset   B_{n_{k}}^{\mathcal{G}}[x, \frac{1}{m_0}]$ we also have $\displaystyle\lim_{k  \to \infty } \mu(B_{n_{k}}^{\mathcal{G}}[x,    \delta])  = 0$. Notice that, the Bowen ball is written as
    \begin{equation*}
     \Phi_{\delta}(x) =  \bigcap_{n=1}^{\infty}  B_{n}^{\mathcal{G}}[x, \delta]
     \end{equation*} 
    and it is a nested intersection, because $ n \le s$ implies $B_{s}^{\mathcal{G}}[x, \delta]    \subset B_{n}^{\mathcal{G}}[x, \delta]$. It      follows that 
    \begin{equation*}
    \mu(\Phi_{\delta}   (x)) =  \mu( \bigcap_{n       =1}^{\infty} B_{n}^{\mathcal{G}}[x, \delta]) \leq  \mu( \bigcap_{k       =1}^{\infty} B_{n_{k}}^{\mathcal{G}}[x, \delta]) = \lim_{k  \to \infty } \mu(B_{n_{k}}^{\mathcal{G}}[x, \delta])  = 0,
    \end{equation*}
     therefore  $ x \in  X_{\delta}$. We conclude that $X^{m_0}\subset X_{\delta}$, and since $X^{m_0}$ has positive measure, we have $  \mu( X_{\delta}) =1$. In other words, $\mu$ is  $(\mathcal{G},\mathcal{G}_{1})$-weakly expansive.
    \end{proof}

Due, to our previous discussion on the relations of expansive and weakly expansive measures, we obtain the following criterion.

    \begin{Cor}[Criterion for       Expansive Measures]
        Let $(\mathcal{G}, \mathcal{G}_{1})$ be a good pseudogroup and suppose that the compacted generating set $\mathcal{G}_{2}$ is symmetric. Then, 
         every ergodic and  $(\mathcal{G}, \mathcal{G}_{2})-$homogeneous measure of the pseudogroup $( \mathcal{G},  \mathcal{G}_{2})$ with  positive local upper measure  entropy is $(\mathcal{G},\mathcal{G}_{1})$-expansive.
    \end{Cor}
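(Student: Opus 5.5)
The plan is to chain Theorem \ref{89} with Theorem \ref{qtp}, applying Theorem B to the pseudogroup with the compacted generators. Since $\mathcal{G}_{2}$ is assumed symmetric and finite, $(\mathcal{G},\mathcal{G}_{2})$ is a finitely generated pseudogroup with a symmetric generating set. This is exactly the setting used in the proof of Theorem \ref{89}. There, symmetry is needed to apply Proposition \ref{.} in the form $g_i^{-1}(X_\delta\cap D_{g_i^{-1}})\subset X_\delta$, and also to use a Lebesgue number of the cover by domains of generators.

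So the first step is to invoke Theorem \ref{89} for $(\mathcal{G},\mathcal{G}_{2})$. The measure $\mu$ is ergodic, $(\mathcal{G},\mathcal{G}_{2})$-homogeneous and has positive local upper entropy with respect to $\mathcal{G}_{2}$, so $\mu$ is $(\mathcal{G},\mathcal{G}_{2})$-weakly expansive. Concretely, there is $\delta>0$ with $\mu(\Phi^{2}_{\delta}(x))=0$ for $\mu$-a.e. $x$.

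The second step is to match constants with Theorem \ref{qtp}. That theorem produces a fixed $\rho>0$ from Lemma \ref{9}, namely the minimum over $g\in\mathcal{G}_1$ of the separation between $int(K_g)$ and the complement of $D_g$. It then requires weak expansivity with constant exactly $\rho$, whereas Theorem \ref{89} only gives us some $\delta$. To handle this, I would use monotonicity of Bowen balls: $\Phi^{2}_{\eta}(x)\subset\Phi^{2}_{\eta'}(x)$ whenever $\eta\le\eta'$. Hence I would set $\rho'=\min\{\rho,\delta\}$. Lemma \ref{9} remains true with $\rho$ replaced by any smaller positive number, since its proof only uses that the constant is at most $\rho_g$ for all $g$. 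The proof of Theorem \ref{qtp} then goes through verbatim with $\rho'$. Alternatively, one can observe that the proof of Theorem \ref{89} works for every sufficiently small $\delta$, namely any $\delta<\min\{1/m_0,\delta_0\}$, so we may take $\delta\le\rho$ directly.

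The conclusion is that $\mu(\Phi^{1}_{\rho'/2}(x))=0$ for every $x\in X$, i.e. $\mu$ is $(\mathcal{G},\mathcal{G}_{1})$-expansive. The main obstacle is exactly this constant bookkeeping: making sure the smallness of the constant needed by Lemma \ref{9}/Theorem \ref{qtp} is compatible with the constant from Theorem \ref{89}. This is resolved by shrinking, since both results hold for all smaller constants.
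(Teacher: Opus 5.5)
Your proposal is correct and follows the same route as the paper, which simply chains Theorem \ref{89} (applied to $(\mathcal{G},\mathcal{G}_{2})$, where the symmetry of $\mathcal{G}_{2}$ is what makes that theorem applicable) with Theorem \ref{qtp}. Your extra bookkeeping makes explicit a constant-matching step that the paper's two-line proof leaves implicit: you pass to $\min\{\rho,\delta\}$ using monotonicity of Bowen balls, and you check that Lemma \ref{9} still holds for any smaller constant.
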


\begin{proof}
    By   Theorem \ref{89}, we have that $\mu$ is $(\mathcal{G}, \mathcal{G}_{2})-$weakly expansive. And, by Theorem   \ref{qtp}, it  follows that      $\mu$ is $(\mathcal{G}, \mathcal{G}_{1})-$expansive. 
\end{proof}

\begin{mar}\label{remark de grupo}
     
     By  Brin-Katok's Entropy Formula for amenable group actions  (Theorem 2.1 of \cite{BrinKfolner}), the variational principle \cite{Kerr} and the ergodic decomposition theorem \cite{citar}, we           obtain not only a different version  of the Theorem of Criterion of Expansive measures for group actions, dropping the homogeneous hypothesis of the measure, but also the existence of expansive measures, as follows.
\end{mar}

\begin{Cor}\label{cor de gr}
    Let $X$ be a compact metric space and             $G    $ a discrete countable amenable group of homeomorphisms in $X$. If there exists an increasing tempered Folner sequence $\{F_{n}\}$ in $G$                         satisfying 
    \begin{equation*}
        \lim_{n \rightarrow \infty} \frac{|F_{n}|}{\log n }     = \infty
    \end{equation*}
    then every $G-$ergodic Borel probability with positive entropy of $G$ is expansive. In particular, any discrete countable amenable group of homeomorphisms with positive topological entropy has an expansive measure.
\end{Cor}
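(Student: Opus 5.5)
The statement is Corollary \ref{cor de gr}. I would prove it following Remark \ref{remark de grupo}: first establish the group analogue of Theorem \ref{89}, with the Brin--Katok formula replacing homogeneity. Then obtain existence from the variational principle and ergodic decomposition.

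For the setting, $G$ acts by global homeomorphisms. Take the Bowen balls along the F{\o}lner sequence, $B_{F_n}(x,\delta)=\{y;\ d(gx,gy)<\delta,\ \forall g\in F_n\}$, and the Bowen ball $\Phi_\delta(x)=\bigcap_n B_{F_n}[x,\delta]$. Since the $F_n$ are increasing and exhaust $G$ (up to the usual normalisation), this equals $\{y;\ d(gx,gy)\le\delta\ \forall g\in G\}$.

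Fix an ergodic $\mu$ with $h_\mu(G)>0$. By Theorem 2.1 of \cite{BrinKfolner}, which applies because the sequence is tempered and $|F_n|/\log n\to\infty$,
\begin{equation*}
\lim_{\delta\to0}\liminf_{n}\frac{-1}{|F_n|}\log\mu(B_{F_n}(x,\delta))=h_\mu(G)
\end{equation*}
for $\mu$-a.e. $x$. Monotonicity in $\delta$ then gives a $\delta_1>0$ and a set $X'$ of positive measure with
\begin{equation*}
\liminf_n\frac{-1}{|F_n|}\log\mu(B_{F_n}(x,2\delta_1))>h_\mu/2 \quad\text{for } x\in X'.
\end{equation*}
In fact this holds a.e. for a fixed $\delta_1$ after a countable union argument. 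Hence $\mu(B_{F_n}[x,\delta_1])\le e^{-|F_n|h_\mu/2}\to0$, since $|F_n|\to\infty$. Because the intersection defining $\Phi_{\delta_1}(x)$ is nested, $\mu(\Phi_{\delta_1}(x))=0$ on a set of positive measure.

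Next I would upgrade this to weak expansiveness. The set $X_\delta=\{x;\ \mu(\Phi_\delta(x))=0\}$ is $G$-invariant up to a change of scale, since $g\Phi_\delta(x)=\Phi'(gx)$ with $g$ uniformly continuous. This scale change is the main obstacle: it is handled exactly as in the Claim of Theorem \ref{89}, using a finite generating set if $G$ is finitely generated. In the general case I would instead note the identity $\Phi_\delta(gx)=g\Phi_\delta(x)$, which holds because $\Phi_\delta$ involves the full group orbit. This gives exact invariance of $X_\delta$, and ergodicity yields $\mu(X_\delta)=1$.

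To pass from weak expansiveness to expansiveness, I would use the group version of Theorem \ref{qtp}, which is easy here because all domains are $X$. If $\mu(\Phi_{\delta/2}(x_0))>0$, pick $y_0\in X_\delta\cap\Phi_{\delta/2}(x_0)$. The triangle inequality gives $\Phi_{\delta/2}(x_0)\subset\Phi_\delta(y_0)$, which has measure zero, a contradiction. Thus $\mu$ is expansive with constant $\delta/2$.

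Finally, for the existence statement: if $h_{top}(G)>0$, the variational principle \cite{Kerr} gives an invariant $\nu$ with $h_\nu>0$. The ergodic decomposition \cite{citar}, together with the fact that entropy is the integral of the entropies of the ergodic components, yields an ergodic component with positive entropy. That component is expansive by the first part.
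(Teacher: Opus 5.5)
Your proposal is correct and follows the same route as the paper's Remark~\ref{remark de grupo}. The Brin--Katok formula replaces homogeneity, exact $G$-invariance of $X_\delta$ (via $\Phi_\delta(gx)=g\Phi_\delta(x)$) plus ergodicity gives weak expansiveness, the triangle inequality upgrades this to expansiveness since all domains are $X$, and the variational principle with the ergodic decomposition gives existence.

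Two small slips do not affect the argument. An increasing F{\o}lner sequence need not exhaust $G$ (e.g.\ $F_n=\{0,\dots,n\}\subset\mathbb{Z}$), but you only need the inclusion $\{y;\ d(gx,gy)\le\delta\ \forall g\in G\}\subset\bigcap_n B_{F_n}[x,\delta]$. Your aside that one fixed $\delta_1$ works a.e.\ ``by a countable union argument'' is unjustified, since that argument only gives measure close to $1$, but it is never used because invariance and ergodicity supply full measure.
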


\begin{Ex}
    Let                                $\Sigma = \{0,  1\}^{\mathbb{Z}} = \{\overline{x} =(\ldots, x_{0}, \ldots); \ x_{i} \in \{0,    1\}, i \in \mathbb{Z}\}$ be the shift space,            $\sigma: \Sigma \longrightarrow \Sigma$ be the shift   map given by $\sigma((x_{i})_{i \in \mathbb{Z}}) = (x_{i+1})_{i \in \mathbb{Z}}$ and consider the                   metric $d(\overline{x},   \overline{y})  = \sum_{k \in \mathbb{Z}} \frac{|x_{k}  - y_{k}|}{2^{k}}$ in    $\Sigma$.

    Define $\mathcal{G}$ the pseudogroup in       $\Sigma$  generated by the set $\mathcal{G}_{1} = \{\sigma, id_{\Sigma}, \sigma^{-1}\}$ and take the Bernoulli measure $\mu_{p}$ in $\Sigma$                  given by the probability vector  $p=(\frac{1}{2}, \frac{1}{2} )$.    We have the following properties:
\\
    \begin{enumerate}[label=(\arabic*)]
\item $\mu_{p}$            is $\mathcal{ G}-$invariant.

Indeed, let $g \in \mathcal{G}$ and $A  \subset D_{g}$ a Borel subset. 

Then,     by definition for each $x\in A$ there exist $g_{1}, \ldots, g_{n} \in \mathcal{G}_{1}$ and a neighborhood $U_{x}$   of $x$ such that 
\begin{equation*}
    g|_{U_{x}}   =               g_{1}  \circ \ldots \circ  g_{n}  |_{U_{x}}.
\end{equation*}
Hence, $\{U_{x}; \ x \in A\}$ is an open cover of                                               $A$ and since $\Sigma$ is a separable metric space, there exists $\{U_{x_{i}}; \ i\in \mathbb{N}\}$ countable subcover of $A$.
Define the sets $V_{1} = U_{x_{1}}\cap A$,  $V_{2} = (U_{x_{2}} \cap A)  \backslash V_{1}$, $\ldots$, $V_{n} = (U_{x_{n}} \cap A)\backslash V_{n-1}$, $\ldots$ and note         that these sets are pairwise disjoints. Also,
$A = \bigcup_{i=1}^{\infty} V_{i}.$
Since $g|_{V_{i}}$  is a composition of          elements of $\mathcal{G}_{1}$ follows that the only ways it can be written are like
\begin{equation*}
    g|_{V_{i}} = \sigma^{l}, \ \text{for some}\ l \in \mathbb{Z},
\end{equation*}
and since $\mu_{p}$ is invariant by each single $\sigma^{l}$ follows that 
    \begin{center}
        $
        \begin{array}{ccl}
            \mu_{p}(g(A)) & = & \mu_{p}(g(\bigcup_{i} V_{i}))
              =  \mu(\bigcup_{i}g(V_{i}))
            
            = \sum_{i} \mu (g|_{V_{i}}(V_{i}))\\
            \\
            & =    & \sum_{i} \mu(V_{i})
            
             =           \mu(\bigcup_{i} V_{i})
            
             =  \mu(A).
        \end{array}
        $
    \end{center}
Therefore, $\mu_{p}$  is $\mathcal{G}-$invariant.
\\
     \item $\mu_{p}$ is $\mathcal{G}-$ergodic.

     Indeed, if $A \subset \Sigma$ is $\mathcal{G}-$invariant then, in particular,        is $\mathcal{G}_{1}-$invariant, hence since $\mu_{p}$ is $\mathcal{G}_{1}-$ergodic follows that 
    \begin{equation*}
        \mu_{p}(A) = 0  \  \text{or}    \ 1.
    \end{equation*}

    \item $\mu_{p}$     is $\mathcal{G}-$homogeneous.

    Indeed, first note that $\mu_{p}$ trivially satisfies the conditions                                $(i)$ and $(ii)$ of the definition of homogeneous measure. 
Now, given $\eps  > 0$, choose $s = \min\{m \in \mathbb{N}; \ \frac{1}{2^{m}}  < \eps   \}, $ then
\begin{center}
    $
    \begin{array}{ccl}
    B_{n}^{\mathcal{G}}(x, \eps) & =&  \{ \overline{z} \in \Sigma;     \ d(g(\overline{x}), g(\overline{z})) \leq  \eps, \ \forall g \in \mathcal{G}_{n}^{\overline{x}} \cap  \mathcal{G}_{n}^{\overline{z}} \}\\
    \\
    & = & \{\overline{z} \in \Sigma; \ d(\sigma^{k}( \overline{x}), \sigma^{k}  (\overline{z}))  \leq \eps; \ -n \leq k \leq n \}\\
    \\
    & = & \{\overline{z} \in \Sigma; \ z_{i} = x_{i}, \ -(n+ s)  \leq i \leq      s+n   \}\\
    \\
     &   = & [x_{-(n+s)}, \ldots, x_{n+s}].
\end{array}
    $
\end{center}
  Taking $\delta = \eps$ and $c = 1$ follows  that 
\begin{equation*}
    \mu_{p}(B_{n}^{\mathcal{G}}(y, \delta)) \leq \mu_p(B_{n}^{\mathcal{G}}(x, \eps)).
\end{equation*}

\item $\overline{h}_{\mu}(\mathcal{G}, \mathcal{G}_{1}) > 0$.

Indeed, we have 
\begin{equation*}
    \overline{h}_{\mu}(\mathcal{G}, \mathcal{G}_{1}) =    \lim_{\eps \rightarrow 0} \limsup_{n} -\frac{1}{n} \log \mu(B_{n}^{\mathcal{G}}(x, \eps)), 
\end{equation*}
 since $\mu(B_{n}^{\mathcal{G}}(x, \eps)) = \log (\frac{1}{2})^{(2(n+s) +1)}$ and
\begin{equation*}
-\frac{1}{n}\log \left( \frac{1}{2} \right)^{(2(n+s) +1)}  = -\frac{(2(n+s)  + 1)}{n}(-\log(2)),
\end{equation*}
follows that 
$\overline{h}_{\mu}(\mathcal{G}, \mathcal{G}_{1}) = 2 \log2$.
      \end{enumerate}

\end{Ex}

\textbf{Question E:} Let $(\mathcal{G},   \mathcal{G}_1)$  be a good pseudogroup. If a  measure $\mu$ is $(\mathcal{G},   \mathcal{G}_{1})-$homogeneous then is it  also  $(\mathcal{G},   \mathcal{G}_{2})-$homogeneous? Or if a  measure $\mu$ is $(\mathcal{G},   \mathcal{G}_{2})-$homogeneous then is it  also  $(\mathcal{G},   \mathcal{G}_{1})-$homogeneous?

\section{Equicontinuity}
\label{s.equi}

In this section we will study   the concept  of uniform equicontinuity for a pseudogroup, as presented in \cite{Ac}. We will prove that every pseudogroup generated  by an finitely generated and uniformly equicontinuous group has no expansive measures.
For what follows, consider $X$ a compact metric space.

\begin{Def}\cite{Ac}
    Let      $\mathcal{G}$ be a    pseudogroup. A subset $G \subset \mathcal{G}$ is said to satisfy the uniform equicontinuity condition if for every                             $\eps > 0$, there exists $\delta > 0$, such that for all $x, y \in X$ and for every $g \in G$ with $x, y \in D_{g}$ we have 
\begin{equation*}
    d(x,y) < \delta \Longrightarrow d(gx, gy) < \eps.
\end{equation*}
\end{Def}

\begin{Def}\cite{Ac}
    A pseudogroup $\mathcal{G}$ is said to be equicontinuous if it has a generating set that is closed under the operations of composition and inversion and satisfies the uniform equicontinuity condition.
\end{Def}

The next result is Theorem C in the Introduction and states that there is a class of pseudogroups that has no expansive measures.

\begin{Teo}\label{5}
   Let $G$ an uniformly equicontinuous group of homeomorphisms in $X$ finitely generated by $G_{1}$. Then, the pseudogroup $\mathcal{G}(G)$ generated by $G$ has no $(\mathcal{G}(G), G_{1})$-weakly expansive measures.
\end{Teo}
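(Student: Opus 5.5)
The plan is to show that for every $\delta>0$ the Bowen ball $\Phi_\delta(x)$ (with respect to $G_1$) contains an honest open ball $B(x,\eta)$ for some $\eta=\eta(\delta)>0$ independent of $x$. Then any Borel probability $\mu$ gives positive measure to $\Phi_\delta(x)$ for every $x$ in $\mathrm{supp}(\mu)$, which has full measure. So $\mu$ cannot be weakly expansive for any $\delta$.

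\textbf{Step 1 (uniform modulus).} Since $G$ is a group of homeomorphisms of $X$, every element of $G$ is defined on all of $X$. Moreover $\mathcal{G}_n = \{g_1\circ\cdots\circ g_n;\ g_i\in G_1\}\subset G$ for every $n$. By the uniform equicontinuity condition, given $\delta>0$ there is $\eta>0$ such that $d(x,y)<\eta$ implies $d(g(x),g(y))<\delta$ for all $g\in G$. This holds in particular for all $g\in\mathcal{G}_n^{x}\cap\mathcal{G}_n^{y}$ and all $n$. Note that $\mathcal{G}_n^x\cap\mathcal{G}_n^y=\mathcal{G}_n$, since the domains are all of $X$.

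\textbf{Step 2 (inclusion of balls).} From Step 1 we get $B(x,\eta)\subset\Phi_\delta(x)$ for every $x\in X$, because every $y$ with $d(x,y)<\eta$ satisfies $d(g(x),g(y))<\delta\le\delta$ for all $g\in\mathcal{G}_n$ and all $n$.

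\textbf{Step 3 (measure).} Let $\mu$ be any Borel probability on $X$ and suppose it is $(\mathcal{G}(G),G_1)$-weakly expansive with constant $\delta$. Its support $S$ is closed with $\mu(S)=1$, and for $x\in S$ we have $\mu(B(x,\eta))>0$ by definition of the support. Step 2 then gives $\mu(\Phi_\delta(x))\ge \mu(B(x,\eta))>0$ for all $x\in S$. Hence the set of $x$ with $\mu(\Phi_\delta(x))=0$ is contained in $X\setminus S$, which has measure zero, not full measure. This contradicts weak expansivity, and since $\delta$ was arbitrary, no such measure exists. (Countable additivity is not even needed beyond the definition of support, which uses separability of the compact metric space $X$.)

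The main obstacle is only the bookkeeping in Step 1. One must make sure the compositions in $\mathcal{G}_n$ are elements of $G$ itself, globally defined, and not mere restrictions. This is what the hypothesis that $G_1\subset G$ generates $G$ as a group of global homeomorphisms ensures. One must also confirm that the equicontinuity modulus applies to all of $G$, not just to $G_1$. If the definition were read as applying only to a generating set closed under composition and inversion, I would note that $G$ itself is such a set, so the condition holds for $G$.
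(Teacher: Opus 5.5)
Your proof is correct and follows essentially the same route as the paper: every $\mathcal{G}_n$ consists of globally defined elements of $G$, so uniform equicontinuity of $G$ gives $B(x,\eta)\subset\Phi_\delta(x)$ with $\eta$ independent of $x$. The only difference is cosmetic and lies in the last step. You use that $\mathrm{supp}(\mu)$ has full measure and every ball centred in it has positive measure, whereas the paper covers $\overline{X_\rho}$ by finitely many balls $B(x_i,\delta)$ with $x_i\in X_\rho$ to force $\mu(\overline{X_\rho})=0$.
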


\begin{proof}

   Firstly note that $\mathcal{G}(G)  = \mathcal{G}(G_{1})$.

 Suppose by   contradiction, that $\mathcal{G}(G)$    has an  $(\mathcal{G}(G), G_{1})$-weakly expansive measure $\mu$,    i.e., there exists $\rho > 0$ such that 
$\mu(\Phi_{\rho}(x)) = 0$  
 a.e. $x \in X$. Since $G$ satisfies  the uniform equicontinuity condition, we have that  for $\epsilon = \rho$ there exists $\delta > 0$  such that  for every $x, y \in X$ and for every  $ g      \in G$ with $x, y \in D_{g}=X$ we have
\begin{equation*}
d(x,y) < \delta \Longrightarrow d(g(x), g(y)) < \rho.
\end{equation*}

\textbf{Claim:} $ B(x, \delta)  \subset \Phi_{\rho}(x)$.

If not, there is $y \in                   B(x, \delta)$ such that $ y \notin \Phi_{\rho}(x)$. Thus, there is                 $g \in \mathcal{G}_{n}^{x} \cap \mathcal{G}_{n}^{y}$ and $n \in \mathbb{N}$ such that $d(x,y)<\delta$ and $d(gx, gy) > \rho$. But since such a $g$ also belongs to $G$, this contradicts the fact that $\mathcal{G}$ is uniform                  equicontinuous. 


Let $X_{\rho} = \{ x \in X; \mu(\Phi_{\rho}(x)) = 0)\}$ and take $   \mathcal{I} = \{B(x, \delta), \ x \in X_{\rho}\}$ be a covering of open balls for $\overline{X_{\delta}}$. Hence, by compactness,  $\overline{X_{\rho}}$  can be covered by a finite subcovering $\{  B(x_{i}, \delta); \ x_{i} \in X_{\rho}, \ i=1  , \ldots, n\}$, then 
\begin{equation*}
\mu(\overline{X_{\rho}})  \leq    \sum_{i=1}^{n} \mu(B(x_{i}, \delta))               \leq \sum_{i=1}^{n} \mu(\Phi_{\rho}(x_{i}))  = 0,
\end{equation*}
which is a contradiction, since $\mu(\overline{X_{\rho}}) = 1$.
So,                                  
$\mathcal{G}(G)$ has no $(\mathcal{G}(G), G_{1})$-weakly expansive measure.
\end{proof}

Since every expansive measure is weakly expansive, follows from the previous Theorem that $(\mathcal{G}(G), G_{1})$ has no expansive measure, as is stated in the next Corollary.

\begin{Cor}\label{equic}
    Let $G$ an uniformly equicontinuous group of homeomorphisms in $X$ finitely generated by $G_{1}$. Then, the pseudogroup $\mathcal{G}(G)$ generated by $G$ has no $(\mathcal{G}(G), G_{1})$-expansive measures.
\end{Cor}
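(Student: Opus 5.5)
This follows directly from Theorem \ref{5}, since every expansive measure is weakly expansive. Suppose, for contradiction, that $\mu$ is a $(\mathcal{G}(G), G_{1})$-expansive measure. By definition there is $\delta>0$ with $\mu(\Phi_{\delta}(x))=0$ for every $x\in X$. In particular, the set $X_{\delta}=\{x\in X;\ \mu(\Phi_{\delta}(x))=0\}$ is all of $X$, so it has full measure. Hence $\mu$ is $(\mathcal{G}(G), G_{1})$-weakly expansive with the same constant $\delta$.

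The hypotheses here are exactly those of Theorem \ref{5}: $G$ is a uniformly equicontinuous group of homeomorphisms of $X$, finitely generated by $G_{1}$, and we use the same generating set $G_{1}$. That theorem says $\mathcal{G}(G)$ has no $(\mathcal{G}(G), G_{1})$-weakly expansive measure. This contradicts the previous paragraph, so no expansive measure exists.

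There is no real obstacle. The only point to check is that the Bowen balls $\Phi_{\delta}$ in the two definitions are taken with respect to the same pair $(\mathcal{G}(G), G_{1})$, so that the implication from expansive to weakly expansive involves no change of generating set. It does not: both definitions are stated for that same pair.
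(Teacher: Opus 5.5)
Your proof is correct and is the same as the paper's: the paper obtains this corollary from Theorem~\ref{5} using exactly your observation. An expansive measure, with $\mu(\Phi_\delta(x))=0$ for every $x$, is in particular weakly expansive for the same pair $(\mathcal{G}(G),G_1)$, so no change of generating set is involved.
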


\begin{Ex}
Let $G_{1}$ be a finite set of isometries defined in  $X$ and consider $G$ the group generated by $G_{1}$. Then,
    the pseudogroup $\mathcal{G}(G)$  generated by $G$   has no expansive measure.

    Indeed, 
    for         $\eps > 0$ take $\delta = \epsilon$, hence for every $x, y                        \in X$ and every $g \in           G $ we have 
    \begin{equation*}
        d(x, y ) < \delta \Longrightarrow d(gx, gy) = d(x, y) < \eps.
    \end{equation*}
    Therefore, $G$ satisfies the uniform equicontinuity condition, then by Corollary  \ref{equic}, has no $(\mathcal{G}(G), G_{1})-$ expansive measures.
\end{Ex}

\begin{Lema}\label{le}
    Let $(\mathcal{G}, \mathcal{G}_{1})$ be a good pseudogroup and $\mathcal{G}_{2}$ the compacted generating set. Suppose $\mathcal{G}$ is equicontinuous with a generating set $\Gamma$. Then, for every $g \in \mathcal{G}_{n}^{2}$ there exist $\lambda > 0$ and $g' \in \Gamma$ such that 
    \begin{equation*}
        x,y \in D_{g}, \ d(x,y)< \lambda \Longrightarrow        g(x) = g'(x) \ \text{and} \ g(y)   =g'(y).
    \end{equation*}
\end{Lema}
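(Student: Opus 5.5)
The plan is to use the compactness built into $\mathcal{G}_{2}$ to reduce the local statement to a finite covering argument.

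\textbf{Step 1: unwind $g$ and cover its domain.} Fix $g\in\mathcal{G}_{n}^{2}$ and write
$$g=g_{1}|_{int(K_{g_{1}})}\circ\ldots\circ g_{n}|_{int(K_{g_{n}})}$$
with $g_i\in\mathcal{G}_1$. Let $\tilde g=g_1\circ\ldots\circ g_n$ be the extension used in Proposition \ref{Bal}. Its domain $D_{\tilde g}$ contains the compact set
$$C=\{x;\ g_n(x)\in\ldots\}\supset \overline{D_g}, \qquad C:=K_{g_n}\cap g_n^{-1}(K_{g_{n-1}})\cap\ldots .$$
Indeed, $C$ is an intersection of preimages of compacts under homeomorphisms whose domains contain those compacts, so inductively $C$ is a closed subset of $K_{g_n}$, hence compact, and $C\subset D_{\tilde g}$.

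Since $\tilde g\in\mathcal{G}$ and $\mathcal{G}$ is generated by $\Gamma$ (in the sense of the definition of generated pseudogroups, with $\Gamma$ closed under composition and inversion), each $z\in C$ has an open neighbourhood $U_z\subset D_{\tilde g}$ and some $g'_z\in\Gamma$ with $\tilde g|_{U_z}=g'_z|_{U_z}$. Here a word in $\Gamma^{\pm1}$ is again an element of $\Gamma$.

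\textbf{Step 2: extract a Lebesgue number.} By compactness of $C$, finitely many $U_{z_1},\ldots,U_{z_k}$ cover $C$. Take $\lambda>0$ to be a Lebesgue number of this cover, valid for subsets of $C$ of diameter $<\lambda$. This is legitimate because the $U_{z_j}$ are open in $X$ and cover the compact metric space $C$.

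Now let $x,y\in D_g\subset C$ with $d(x,y)<\lambda$. The set $\{x,y\}$ then lies in a single $U_{z_j}$. Setting $g'=g'_{z_j}$, we get $g(x)=\tilde g(x)=g'(x)$ and $g(y)=g'(y)$.

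\textbf{Step 3: the quantifier issue.} As stated, $g'$ is allowed to depend on the pair $(x,y)$, and only $\lambda$ must be uniform in $x,y$; this is exactly what the argument delivers. If instead a single $g'$ were required for all pairs, the claim would be false in general. I therefore read the lemma in the former sense: for each $g$ there is $\lambda>0$ such that for each such pair some $g'\in\Gamma$ works.

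\textbf{Main obstacle.} The delicate point is producing a compact set containing $D_g$ and sitting inside $D_{\tilde g}$. If I only knew that $D_g$ is open, the Lebesgue number could degenerate near its boundary. The inductive definition of $C$ via the compacts $K_{g_i}\subset D_{g_i}$ is what resolves this, and it is where goodness of $\mathcal{G}_1$ is used. A secondary check is that the local representatives provided by generation by $\Gamma$ are genuinely elements of $\Gamma$; this uses the closure of $\Gamma$ under composition and inversion.
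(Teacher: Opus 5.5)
Your proof is correct and follows the same route as the paper: you cover a compact set containing $D_g$ by finitely many open sets, on each of which $g$ coincides with an element of $\Gamma$, and then take a Lebesgue number of that cover. Your set $C\subset D_{\tilde g}$, built from the $K_{g_i}$, makes precise what the paper loosely calls $K_g$ for $g\in\mathcal{G}_n^2$. Your reading of the quantifiers, with $g'$ depending on the pair $(x,y)$, is exactly what the paper's argument delivers and what its later application uses.
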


\begin{proof}
    Let $g  \in \mathcal{G}_{n}^{2}$ and $x \in K_{g}$. Since $\Gamma$ is also a generating set, there exist $g' \in \Gamma$ and a neighborhood $U_{g}(x)$ of $x$ such that $g|_{U_{g}(x)}  = g'|_{U_{g}(x)}$. 

    Since $K_{g}$ is compact, there exists a finite open covering $\{ U_{g}(x_{1}), \ldots, U_{g}(x_{r})  \}$ of     $K_{g}$. Let $\lambda_{g}$ be the Lebesgue number of this covering, and take     $\lambda = \min \{ \lambda_{g}; \ g \in \mathcal{G}_{n}^{2}         \}$. Hence, if $ d(x,y)< \lambda$ then $g(x) =      g'(x)$ and $g(y)  = g'(y)$.
\end{proof}

\begin{Teo}
   Let $(\mathcal{G}, \mathcal{G}_{1})$  a good and equicontinuous pseudogroup and $\mathcal{G}_{2}$ the compacted generating set. Then, $\mathcal{G}$ has no $(\mathcal{G}, \mathcal{G}_{2})$-weakly expansive measures.
\end{Teo}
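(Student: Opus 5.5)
The plan mirrors the proof of Theorem \ref{5}. We argue by contradiction: suppose $\mu$ is $(\mathcal{G},\mathcal{G}_{2})$-weakly expansive with constant $\rho>0$. Then $X_{\rho}=\{x\in X;\ \mu(\Phi^{2}_{\rho}(x))=0\}$ has full measure. The core step is to find a $\delta>0$, independent of $x$, with
\begin{equation*}
B(x,\delta)\subset \Phi^{2}_{\rho}(x) \quad \text{for every } x\in X.
\end{equation*}
Granting this, we cover the compact set $\overline{X_{\rho}}$ by finitely many balls $B(x_i,\delta)$ with $x_i\in X_{\rho}$. Each ball lies in a $\mu$-null Bowen ball, so $\mu(\overline{X_{\rho}})=0$, which contradicts $\mu(X_\rho)=1$.

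To get the inclusion, let $\Gamma$ be the equicontinuous generating set, which is closed under composition and inversion. Given $\rho$, uniform equicontinuity gives $\delta_1>0$ such that $d(x,y)<\delta_1$ and $x,y\in D_{g'}$ with $g'\in\Gamma$ imply $d(g'x,g'y)<\rho$. Now let $y\in B(x,\delta)$ and $g\in\mathcal{G}^{2,x}_n\cap\mathcal{G}^{2,y}_n$. By Lemma \ref{le} there are $\lambda>0$ and $g'\in\Gamma$ with $g(x)=g'(x)$ and $g(y)=g'(y)$ whenever $d(x,y)<\lambda$. For such points we also need $x,y\in D_{g'}$; this holds because the proof of Lemma \ref{le} produces $g'$ agreeing with $g$ on a neighbourhood $U_g(x_j)$ containing both points, by the Lebesgue number argument. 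Then
\begin{equation*}
d(g(x),g(y))=d(g'(x),g'(y))<\rho .
\end{equation*}
So $y\in\Phi^2_\rho(x)$ provided $\delta\le\min\{\delta_1,\lambda\}$.

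The main obstacle is that $\lambda$ in Lemma \ref{le} depends on $n$: it is a minimum over the finite set $\mathcal{G}^2_n$, and nothing prevents $\lambda_n\to 0$, whereas the Bowen ball involves all $n$ at once. I would first try to prove a uniform version using the compactness built into $\mathcal{G}_2$. The domain of every element of $\mathcal{G}^2_n$ is contained in some $K_{g_n}$, and every intermediate image lies in a compact $K_{g_i}$. So local representation by elements of $\Gamma$ should only be needed at the level of the finitely many generators $g_i|_{\mathrm{int}(K_{g_i})}$. One would take a single Lebesgue number $\lambda_0$ for the finitely many germ-coverings of the $K_{g}$, $g\in\mathcal{G}_1$. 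Then one would use equicontinuity of $\Gamma$, which is closed under composition, to keep successive images of $x$ and $y$ within distance $\lambda_0$ at every stage. Concretely, choose $\delta$ so that $d<\delta$ forces all $\Gamma$-images to stay $\min\{\lambda_0,\rho\}$-close. Then argue by induction on $n$ that $g_i\circ\cdots\circ g_n$ agrees at $x$ and $y$ with a single element of $\Gamma$ defined at both points.

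If this induction fails, there is a fallback. The same argument with $\lambda_n$ still yields $\mu(B_n^{2}[x,\rho])\ge\mu(B(x,\lambda_n))$. Alternatively one can pass to Theorem \ref{qtp} and the inclusion $\Phi^1\subset\Phi^2$ of Proposition \ref{Bal}, reducing to the case of $\mathcal{G}_1$ where domains need not be compact. I expect the uniform-in-$n$ control to be the delicate point.
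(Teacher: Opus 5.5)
Your skeleton matches the paper's: argue by contradiction, prove a uniform inclusion $B(x,\delta)\subset\Phi^2_\rho(x)$, then cover $\overline{X_\rho}$ by finitely many such balls. The difference is in how you get the key inclusion, and your route is what makes it rigorous.

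The paper sets $\xi=\min\{\delta,\lambda\}$ with $\lambda$ taken from Lemma \ref{le}. It then applies this to $g\in\mathcal{G}^{2,x}_n\cap\mathcal{G}^{2,y}_n$ for an $n$ chosen afterwards. But that $\lambda$ is $\min_{g\in\mathcal{G}^2_n}\lambda_g$, so it depends on $n$, and the paper never addresses this. That is exactly the obstacle you flagged.

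Your induction does close the gap, so commit to it. Here is the argument in full.
\begin{itemize}
\item For each of the finitely many $g\in\mathcal{G}_1$, cover $K_g$ by finitely many open sets $U\subset D_g$ with $g|_U=\gamma_U|_U$ for some $\gamma_U\in\Gamma$. This is possible because $\Gamma$ generates $\mathcal{G}$ and is closed under composition and inversion.
\item Let $\lambda_0$ be a Lebesgue number common to all these covers.
\item Choose $\delta\le\lambda_0$ such that $d(x,y)<\delta$ gives $d(h(x),h(y))<\min\{\lambda_0,\rho\}$ for every $h\in\Gamma$ defined at $x$ and $y$.
\end{itemize}
Now take $g=g_1|_{\mathrm{int}(K_{g_1})}\circ\cdots\circ g_n|_{\mathrm{int}(K_{g_n})}$ defined at $x,y$ with $d(x,y)<\delta$.
\begin{itemize}
\item Base case: $x,y\in\mathrm{int}(K_{g_n})$ are $\lambda_0$-close, so they lie in a common $U$. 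Set $h_1=\gamma_U$.
\item Inductive step: suppose $h_k\in\Gamma$ is defined at $x,y$ and agrees there with $g_{n-k+1}\circ\cdots\circ g_n$. Then $h_k(x),h_k(y)\in\mathrm{int}(K_{g_{n-k}})$, and they are $\lambda_0$-close by equicontinuity. Hence they lie in a common $U$ for $g_{n-k}$. The map $h_{k+1}=\gamma_U\circ h_k$ is in $\Gamma$, is defined at $x,y$, and agrees there with $g_{n-k}\circ\cdots\circ g_n$.
\item At $k=n$ you get $d(g(x),g(y))=d(h_n(x),h_n(y))<\rho$, with $\delta$ independent of $n$, $x$ and $y$.
\end{itemize}

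Compared with the paper, compactness is used only for the finitely many generators. Uniformity along words comes from two facts: the equicontinuity constant is uniform over all of $\Gamma$, and $\Gamma$ is closed under composition. This also avoids Lemma \ref{le}'s use of a set $K_g$ for compositions $g$, which is not defined.

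Drop the fallbacks; neither would work.
\begin{itemize}
\item The bound $\mu(B^2_n[x,\rho])\ge\mu(B(x,\lambda_n))$ says nothing about $\mu(\Phi^2_\rho(x))$ when $\lambda_n\to 0$.
\item Going through Theorem \ref{qtp} and Proposition \ref{Bal} points the wrong way. You would then need balls inside the smaller sets $\Phi^1_{\rho/2}(x)$, where the domains of $\mathcal{G}_1$ are not compact.
\end{itemize}
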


\begin{proof}

 Suppose by   contradiction, that $\mathcal{G}$    has an  $(\mathcal{G}, \mathcal{G}_{2})$-weakly expansive measure $\mu$,    i.e., there exists $\rho > 0$ such that 
$\mu(\Phi_{\rho}^{2}(x)) = 0$  
 a.e. $x \in X$. Since $\mathcal{G}$ is equicontinuous, there exists $\Gamma$ a generating set, closed by operations of composition and inverse and that satisfies the equicontinuity condition. Then, for $\epsilon = \rho$ there exists $\delta > 0$  such that  for every $x, y \in X$ and for every  $ g'      \in \Gamma$ with $x, y \in D_{g'}$ we have 
\begin{equation*}
d(x,y) < \delta \Longrightarrow d(g'(x), g'(y)) < \rho.
\end{equation*}
Let $\lambda > 0$ as in  Lemma \ref{le} and let $\xi = \min\{\delta, \lambda \}$.

\textbf{Claim:} $B(x, \xi)  \subset \Phi_{\rho}^{2}(x)$.

If not, there is $y \in                   B(x, \delta)$ such that $ y \notin \Phi_{\rho}(x)$. Thus, there is    $n \in \mathbb{N}$ and             $g \in \mathcal{G}_{n}^{2,x} \cap \mathcal{G}_{n}^{2,y}$  such that $d(x,y)<\xi $ and $d(gx, gy) > \rho$. 
But since       $\xi < \lambda $   follows by Lemma \ref{le}   that for such a $g$, there exists $g' \in \Gamma$ such that $g(x) = g'(x)$ and $g(y)  = g'(y)$. Hence $d(g'(x),   g'(y)) > \rho$, but this contradicts the fact that $\Gamma$ satisfies the condition of uniform       equicontinuous. 


Let $X_{\rho} = \{ x \in X; \mu(\Phi_{\rho}^{2}(x)) = 0)\}$ and take $   \mathcal{I} = \{B(x, \delta), \ x \in X_{\rho}\}$ be a covering of open balls for $\overline{X_{\rho}}$. Hence, by compactness,  $\overline{X_{\rho}}$  can be covered by a finite subcovering $\{  B(x_{i}, \delta); \ x_{i} \in X_{\rho}, \ i=1  , \ldots, n\}$, then 
\begin{equation*}
\mu(\overline{X_{\rho}})  \leq    \sum_{i=1}^{n} \mu(B(x_{i}, \delta))               \leq \sum_{i=1}^{n} \mu(\Phi_{\rho}(x_{i}))  = 0,
\end{equation*}
which is a contradiction, since $\mu(\overline{X_{\rho}}) = 1$.
So,                                  
$\mathcal{G}(G)$ has no $(\mathcal{G}(G), G_{1})$-weakly expansive measure.
\end{proof}

Similarly as explained previously in this section, since every expansive measure is also weakly expansive, we have the following corollary.

\begin{Cor}
   Let $(\mathcal{G}, \mathcal{G}_{1})$  a good and equicontinuous pseudogroup and $\mathcal{G}_{2}$ the compacted generating set. Then, $\mathcal{G}$ has no $(\mathcal{G}, \mathcal{G}_{2})$-expansive measures.
\end{Cor}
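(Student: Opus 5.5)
The statement to prove is the final Corollary: a good equicontinuous pseudogroup has no $(\mathcal{G},\mathcal{G}_{2})$-expansive measures. The plan is to deduce it directly from the preceding Theorem, which says that such a $\mathcal{G}$ admits no $(\mathcal{G},\mathcal{G}_{2})$-weakly expansive measures. Everything therefore reduces to the implication ``expansive $\Rightarrow$ weakly expansive'' for the fixed generating set $\mathcal{G}_{2}$, followed by a contradiction.

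First I will argue by contradiction. Suppose $\mu$ is a Borel probability measure that is $(\mathcal{G},\mathcal{G}_{2})$-expansive. By definition there is $\delta>0$ with $\mu(\Phi^{2}_{\delta}(x))=0$ for every $x\in X$. Consider the set $X_{\delta}=\{x\in X;\ \mu(\Phi^{2}_{\delta}(x))=0\}$. By the expansiveness assumption this set is all of $X$, so it has full $\mu$-measure. That is exactly the statement that $\mu$ is $(\mathcal{G},\mathcal{G}_{2})$-weakly expansive, with the same constant $\delta$.

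Next I will invoke the preceding Theorem, applied with the same good generating set $\mathcal{G}_{1}$, its compacted set $\mathcal{G}_{2}$, and the equicontinuous generating set $\Gamma$. It forbids the existence of any $(\mathcal{G},\mathcal{G}_{2})$-weakly expansive measure, so this is a contradiction and no $(\mathcal{G},\mathcal{G}_{2})$-expansive measure exists.

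There is essentially no obstacle here: the only point to check is that both notions are taken with respect to the same generating set $\mathcal{G}_{2}$ and the same Bowen balls $\Phi^{2}_{\delta}$, so the trivial inclusion of definitions applies verbatim. All the substantive work lives in the preceding Theorem, which I use as given. That Theorem's proof covers $\overline{X_{\rho}}$ by balls $B(x_i,\xi)\subset\Phi^{2}_{\rho}(x_i)$ using Lemma \ref{le} together with uniform equicontinuity.
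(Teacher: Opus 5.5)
Your proposal is correct and follows the paper's route exactly. The paper also gets this Corollary in one line from the preceding Theorem, using the trivial observation that a $(\mathcal{G},\mathcal{G}_{2})$-expansive measure (one with $\mu(\Phi^{2}_{\delta}(x))=0$ for every $x$) is in particular $(\mathcal{G},\mathcal{G}_{2})$-weakly expansive with the same constant, so all the substantive content lies in that Theorem, which both you and the paper take as given.
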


\section{Acknowledgements}

Alexander Arbieto was partially supported by CAPES– Finance Code 001, CNPq Grant 307877/2025-6, PRONEX-Dynamical Systems and FAPERJ “Programa Cientista do Nosso Estado", E-26/201.181/2022 and E-26/200.281/2026.

Luana Segantim was supported by CAPES and CNPq.

Jaqueline Siqueira was supported by
the grant E-26/010/002610/2019, Rio de Janeiro Research Foundation (FAPERJ), and by the Coordena\c c\~ao de Aperfeiçoamento de Pessoal de N\'ivel Superior - Brasil (CAPES), Finance Code 001.

\end{document}